\theoremstyle{plain}
\newtheorem{thm}{Theorem}[section]
\newtheorem{theorem}[thm]{Theorem}
\newtheorem{lemma}[thm]{Lemma}
\newtheorem{corollary}[thm]{Corollary}
\theoremstyle{definition}
\newtheorem{remark}[thm]{Remark}
\numberwithin{equation}{section}
 \title[Relative dynamical degree of correspondences over an arbitrary field]{Relative dynamical degrees of correspondences over a field of arbitrary characteristic}
 \author{Tuyen Trung Truong}
 \address{School of Mathematical Sciences, The University of Adelaide, Adelaide SA 5005, Australia}
 \email{tuyen.truong@adelaide.edu.au}
\thanks{The author was supported by Australian Research Council grants DP120104110 and DP150103442.}
    \date{\today}
    \keywords{Alteration, Chow's moving lemma, Correspondence, Rational map, Relative dynamical degrees, Resolution of singularities}
    \subjclass[2010]{37F, 14D, 32U40, 32H50}
\begin{document}
\maketitle
\begin{abstract}
Let $K$ be an algebraically closed field of arbitrary characteristic, $X$ an irreducible variety and $Y$ an irreducible projective variety over $K$, both are not necessarily smooth. Let $f:X\rightarrow X$ and $g:Y\rightarrow Y$ be dominant correspondences, and $\pi :X\rightarrow Y$ a dominant rational map such that $\pi \circ f=g\circ \pi$.  We define relative dynamical degrees $\lambda _p(f|\pi )$ ($p=0,\ldots ,\dim (X)-\dim (Y)$). These degrees measure the relative growth of positive algebraic cycles, satisfy a product formula when $Y$ is smooth and $g$ is a multiple of a rational map, and are birational invariants. More generally, a weaker product formula is proven for more general semi-conjugacies, and for any  generically finite semi-conjugacy $(\varphi ,\psi )$ from $(X_2,f_2)\rightarrow (Y_2,g_2)$ to $(X_1,f_1)\rightarrow (Y_1,g_1)$ we have $\lambda _p(f_1|\pi _1)\geq \lambda _p(f_2|\pi _2)$ for all $p$. Many of our results are new even when $K=\mathbb{C}$. We make use of  de Jong's alterations and Roberts' version of Chow's moving lemma. In the lack of resolution of singularities, the consideration of correspondences is necessary even when $f,g$ are rational maps. The case $K$ is not algebraically closed further requires working with correspondences over reducible varieties. 

\end{abstract}

\section{Introduction}
 
One important tool in Complex Dynamics is dynamical degrees for dominant meromorphic selfmaps. They are bimeromorphic invariants of a meromorphic selfmap $f:X\rightarrow X$ of a compact K\"ahler manifold $X$. The $p$-th dynamical degree $\lambda _p(f)$  is the exponential growth rate of the spectral radii of the pullbacks $(f^n)^*$ on the Dolbeault  cohomology group $H^{p,p}(X)$. For a surjective holomorphic map $f$, the dynamical degree $\lambda _p(f)$ is simply the spectral radius of $f^*:H^{p,p}(X)\rightarrow H^{p,p}(X)$.  Fundamental results of Gromov \cite{gromov} and Yomdin \cite{yomdin} expressed the topological entropy of a surjective holomorphic map in terms of its dynamical degrees: $h_{top}(f)=\log \max _{0\leq p\leq {\dim (X)}}\lambda _{p}(f)$. Since then, dynamical degrees have played a more and  more important role in dynamics of meromorphic maps. In many results and conjectures in Complex Dynamics in higher dimensions, dynamical degrees play a central role. 

Let $X$ be a compact K\"ahler manifold of dimension $k$ with a K\"ahler form $\omega _X$, and let $f:X\rightarrow X$ be a dominant meromorphic map. For $0\leq p\leq k$, the $p$-th dynamical degree $\lambda _p(f)$ of $f$ is defined as follows 
\begin{equation} 
\lambda _p(f)=\lim _{n\rightarrow\infty}(\int _X(f^n)^*(\omega _X^p)\wedge \omega _X^{k-p})^{1/n}=\lim _{n\rightarrow\infty}r_p(f^n)^{1/n},
\label{Equation01}\end{equation}
where $r_p(f^n)$ is the spectral radius of the linear map $(f^n)^*:H^{p,p}(X)\rightarrow H^{p,p}(X)$. The existence of the limit in the above expression is non-trivial and has been proven by Russakovskii and Shiffman \cite{russakovskii-shiffman} when $X=\mathbb{P}^k$, and by Dinh and Sibony \cite{dinh-sibony10}\cite{dinh-sibony1} when $X$ is compact K\"ahler. Both of these results use regularisation of positive closed currents. The limit in (\ref{Equation01}) is important in showing that dynamical degrees are birational invariants. 

For meromorphic maps of compact K\"ahler manifolds with invariant fibrations, a more general notion called relative dynamical degrees has been defined by Dinh and Nguyen in \cite{dinh-nguyen}. (Here, by a fibration we simply mean a dominant rational map, without any additional requirements.) Via "product formulas" proven in \cite{dinh-nguyen} when $X$ is complex projective and \cite{dinh-nguyen-truong1} when $X$ is compact K\"ahler, these relative dynamical degrees provide a very useful tool to check whether a meromorphic map is primitive (i.e.  has no invariant fibrations over a base which is of smaller dimension and not a point).  Roughly speaking, primitive maps (first defined in \cite{zhang}) are those which do not come from smaller dimensional manifolds, hence are "building blocks" from which all meromorphic maps can be constructed. In another direction,  when $K=\mathbb{C}$,  Dinh and Sibony \cite{dinh-sibony11} defined dynamical degrees and topological entropy for meromorphic correspondences over irreducible varieties. Computations of dynamical degrees of so-called Hurwitz correspondences of the moduli spaces $\mathcal{M}_{0,N}$  were given in \cite{ramadas}, wherein a proof that dynamical degrees of correspondences (over $K=\mathbb{C}$, and for irreducible varieties) are birational invariants was also given. 

Recently, work on birational maps of surfaces over an algebraically closed field of arbitrary characteristic has become more and more popular. As some examples, we refer the readers  to \cite{esnault-srinivas, xie, blanc-cantat, esnault-oguiso-yu, oguiso}. In these results, (relative) dynamical degrees also play an important role. Because of this and  for further applications in algebraic dynamics, it is desirable to have a purely algebraic definition of (relative) dynamical degrees for a rational map over a field other than $\mathbb{C}$.  The purpose of this paper is to define (relative) dynamical degrees for dominant rational maps over arbitrary fields and varieties. It turns out that the same idea can be applied to {\bf dominant correspondences} as well.  

{\bf Notation.} Before continuing, let us recall some notation. Let $K$ be a field and $X,Y$ irreducible (not necessarily smooth or projective) varieties. A {correspondence}  $f:X\rightarrow Y$ is given by an algebraic cycle $\Gamma _f=\sum _{i=1}^m\Gamma _i$ on $X\times Y$, where $m$ is a positive integer and $\Gamma _i\subset X\times Y$ are irreducible subvarieties of dimension exactly $\dim (X)$. We do not assume that $\Gamma _i$ are distinct, and hence may write the above sum as $\sum _{j}a_j\Gamma _j$ where $\Gamma _j$ are distinct and $a_j$ are positive integers. We will call $\Gamma _f$ the graph of $f$, by abusing the usual notation when $f$ is a rational map.  If $f$ is a correspondence and $a\in \mathbb{N}$, we denote by $af$ the correspondence whose graph is $a\Gamma _f$. In other words, if $\Gamma _f=\sum _{i}\Gamma _i$ then $\Gamma _{af}=\sum _{i}a\Gamma _i$.  If $\Gamma _f=a\Gamma$ where $\Gamma$ is irreducible and $a\in \mathbb{N}$, we say that the correspondence $f$ is {irreducible}. A  rational map $f$ is an irreducible correspondence, since its graph is irreducible. A correspondence is dominant if for each $i$ in the sum, the two natural projections from $\Gamma _i$ to $X,Y$ are dominant. Dominant correspondences can be composed and the resulting correspondence is also dominant. Given two dominant correspondences $f:X\rightarrow X$ and $g:Y\rightarrow Y$, we say that they are semi-conjugate if there is a dominant rational {\bf map} $\pi :X\rightarrow Y$ such that $\pi \circ f=g\circ \pi$. We will simply write $\pi :(X,f)\rightarrow (Y,g)$ to mean that $\pi$ is a dominant rational map semi-conjugating $(X,f)$ and $(Y,g)$. 

When the field $K$ is such that resolutions of singularities exist for varieties over $K$ up to dimension $d$, we established previously in \cite{truong} the existence of relative dynamical degrees of semi-conjugate dominant rational maps over spaces of dimension $\leq d$ and some basic properties. Let us remind briefly the ideas of the proof in this case. Given two dominant rational maps $f:X\rightarrow X$ and $g:Y\rightarrow Y$ which are semi-conjugated via a dominant rational map $\pi :X\rightarrow Y$, we can by using resolutions of singularities to reduce to the case where $X$ and $Y$ are smooth and $\pi$ is regular. The results are established provided certain quantitative estimates of strict transforms by rational maps and strict intersections of varieties are available. To establish the needed estimates, in \cite{truong} we used  a version of the Chow's moving lemma by Roberts \cite{roberts}. By Hironaka's theorem, resolutions of singularities exist in all dimensions when $K$ has characteristic $0$. On a field of positive characteristic, resolutions of singularities are established for dimensions up to $3$ (\cite{zariski1, zariski2, abhyankar1, abhyankar2, cutkosky, cossart-piltant1, cossart-piltant2}) through the effort of many mathematicians. It is still an open question whether resolution of singularities exists for varieties in higher dimensions. This restricts the scope of  applications of results in \cite{truong}.

Our current paper overcomes this difficulty in positive characteristics by using - in addition to Roberts' version of Chow's moving lemma as in our previous work - de Jong's alteration and (relative) dynamical degrees of correspondences. Let us give a brief description of this idea. By seminal results of de Jong \cite{jong}, an alteration is always available. More precisely, given an irreducible variety $X$, there always exist an irreducible smooth quasi-projective variety $X_1$ (having a smooth projective compactification) together with a generically finite, surjective regular morphism $\varphi :X_1\rightarrow X$.  Now, let $f:X\rightarrow X$ be a dominant rational map. To define dynamical degrees for $f$, in order to apply the strategy described above, the first step would be to find a dominant rational map $f_1:X_1\rightarrow X_1$ so that we have a semi-conjugacy $\varphi : (X_1,f_1)\rightarrow (X,f)$. On the one hand, the existence of such a map $f_1$ is not guaranteed when $f$ and $\varphi$ are fixed. On the other hand, a correspondence $f_1$ can readily be found so that $\varphi :(X_1,f_1)\rightarrow (X,\deg (\varphi )f)$ is a semi-conjugacy. If we can define dynamical degrees for the correspondence $f_1$,  we can use them to define dynamical degrees for $f$. To prove that these dynamical degrees are well-defined, we need to show that they do not depend on the choice of the alteration $X_1$. This means that, in the lack of resolution of singularities, we need also prove that dynamical degrees are invariant for certain {\bf generically finite} semi-conjugate correspondences. A special case of the needed result has been proven (\cite{dinh-nguyen}) for dynamical degrees of {generically finite} semi-conjugate meromorphic maps of compact K\"ahler manifolds.    

Even if the varieties we are concerned are all projective and {smooth}, in the lack of resolution of singularities, the consideration of correspondences is still necessary if we want to define relative dynamical degrees for a semi-conjugacy of rational maps $\pi :(X,f)\rightarrow (Y,g)$. In fact, to define relative dynamical degrees, the first step is to reduce to the case $\pi$ is regular. We can apply de Jong's alteration to the graph of $\pi$ to have that $\pi$ is regular, but in exchange we have to consider correspondences. As far as we know, relative dynamical degrees for correspondences have not been proposed in the current literature, even in the case  $K=\mathbb{C}$. When $K$ is not algebraically closed, we have to further work with correspondences over reducible varieties, even if the starting varieties are irreducible.    
 
Now we are ready to state the main results of the paper. For simplicity, we will state here only the case when $K$ is algebraically closed and $X,Y$ are projective varieties. Extensions (e.g. $X$ is not projective or $K$ is not algebraically closed) will be discussed later.   
\begin{theorem}
Let $X$ and $Y$ be irreducible projective (not necessarily smooth) varieties, of corresponding dimensions $k$ and $l$, over an algebraic closed field $K$. Let $\pi :(X,f)\rightarrow (Y,g)$ be a semi-conjugacy of  dominant correspondences. 

Then,  for any $0\leq p\leq k-l$, there is a well-defined birational invariant number $\lambda _p(f|\pi )$, called the $p$-th relative dynamical degree. If $Y=$ a point, we denote these simply by $\lambda _p(f)$ ($p=0,\ldots ,k$) and call them dynamical degrees. These relative dynamical degrees have the following properties: 

1) {\bf The regular case.} Assume that $X$ is smooth and $\pi $ is regular. Let $\iota _X:X\rightarrow \mathbb{P}^N$ and $\iota _Y :Y\rightarrow \mathbb{P}^M$ be embeddings. Let $H_M\subset \mathbb{P}^M$ and $H_N\subset \mathbb{P}^N$ be ample divisors, and define $\omega _X=\iota _X^*(H_N)$. Then for each $p=0,\ldots ,k-l$ the limit
\begin{eqnarray*}
\lim _{n\rightarrow\infty}\deg ((f^n)^*(\omega _X^*(H_N^p)).(\iota _Y \circ \pi)^*(H_N^l))^{1/n},
\end{eqnarray*}
exists and is independent of the choices of $\iota _X$, $\iota _Y$ and $H_M,H_N$ . Here the degree of an algebraic cycle on $X$ is computed  in the embedding $\iota _X$. We define $\lambda _p(f|\pi )$ to be this limit.  We always have $\lambda _p(f|\pi )\geq 1$ for all $p=0,\ldots ,k-l$.  

2) {\bf Birational invariant.} The relative dynamical degrees are birational invariants. More precisely, let be given two semi-conjugacies $\pi _1:(X_1,f_1)\rightarrow (Y,g)$ and $\pi _2:(X_2,f_2)\rightarrow (Y_2,g_2)$, together with two  semi-conjugacies $\varphi :(X_2,f_2)\rightarrow (X_1,f_1)$ and $\psi : (Y_2,g_2)\rightarrow (Y_1,g_1)$ where the two composition maps $\psi \circ \pi _2$ and $\pi _1\circ \varphi$ are the same. Assume that $\psi :Y_2\rightarrow Y_1$ is a generically finite rational map, and moreover that $\varphi$ is birational.  Then $\lambda _p(f_1|\pi _1)=\lambda _p(f_2|\pi _2)$ for all $p=0,\ldots ,\dim (X_1)-\dim (Y_1)$.

 3) {\bf Log-concavity.}  Assume that there are infinitely many $n\in \mathbb{N}$  such that the correspondences $f^n$ are irreducible, i.e. $f^n=d_n\Gamma _n$ where $\Gamma _n$ is an irreducible varitety and $d_n\in \mathbb{N}$. Then, the relative dynamical degrees are log-concave, that is $\lambda _{p}(f|\pi )^2\geq \lambda _{p-1}(f|\pi ).\lambda _{p+1}(f|\pi )$. In particular, if $f$ is a dominant rational map, then its dynamical degrees are log-concave. 
 
 4) {\bf Product formula.} Assume that $Y$ is smooth, and $g=ag'$ where $a$ is a positive integer and $g':Y\rightarrow Y$ a dominant rational map. Then, for every $0\leq p\leq k$ we have
\begin{eqnarray*}
a\lambda _p(f) = \max _{0\leq j\leq l,~0\leq p-j\leq k-l}\lambda _j(g)\lambda _{p-j}(f|\pi ),
\end{eqnarray*}
or equivalently
\begin{eqnarray*}
\lambda _p(f)= \max _{0\leq j\leq l,~0\leq p-j\leq k-l}\lambda _j(g')\lambda _{p-j}(f|\pi ).
\end{eqnarray*}

In particular, if $f$ is a dominant rational map (hence, so is $g$), then the product formula is satisfied with $a=1$. 
 
5) {\bf Computing via numerical equivalent classes.} Assume that $X$ is smooth. Let $N^p(X)$ be the group of algebraic cycles of codimension $p$ on $X$ modulo numerical equivalence (see the next section for more details), and $N^p_{\mathbb{R}}(X)=N^p(X)\otimes _{\mathbb{Z}}\mathbb{R}$. Let $||.||$ be any norm on $N^p_{\mathbb{R}}(X)$, and $||(f^n)^*_p||$ the corresponding norm of the linear map $(f^n)^*:N^p_{\mathbb{R}}(X)\rightarrow N^p_{\mathbb{R}}(X)$. Then
\begin{eqnarray*}
\lambda _p(f)=\lim _{n\rightarrow\infty}||(f^n)^*_p||^{1/n}.
\end{eqnarray*}
\label{TheoremRelativeDynamicalDegrees}\end{theorem}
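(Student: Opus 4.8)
The plan is to prove part~(1) first and to bootstrap the remaining assertions from it. Writing $d_p(f^n)=\deg\big((f^n)^*(\omega_X^p)\cdot(\iota_Y\circ\pi)^*(H_M^l)\big)$ for the degree (in the embedding $\iota_X$) of the indicated cycle, the heart of part~(1) is a submultiplicative estimate $d_p(f^{n+m})\le C\,d_p(f^n)\,d_p(f^m)$ with $C$ depending only on $X$, $\iota_X$, $\iota_Y$. To obtain it one represents $(f^{n+m})^*(\omega_X^p)$ as the strict transform under $f^m$ of the strict transform under $f^n$ and bounds strict transforms and strict intersections of effective cycles in terms of their degrees; this is exactly the point at which Roberts' version of Chow's moving lemma enters, as in \cite{truong, roberts}. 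Fekete's subadditivity lemma applied to $\log\big(C\,d_p(f^n)\big)$ then yields the limit $\lambda_p(f|\pi)=\lim_n d_p(f^n)^{1/n}$, which is $\ge 1$ because $d_p(f^n)$ is bounded below by the degree of a fixed nonzero effective cycle. Independence of the auxiliary data $\iota_X,\iota_Y,H_M,H_N$ is the usual two-sided comparison: another choice changes $\omega_X$ and $(\iota_Y\circ\pi)^*(H_M)$ only up to fixed integer multiples on the relevant effective cones, so the two candidate sequences differ by a bounded multiplicative factor and have the same $n$-th root limit.

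For part~(2) the engine is push--pull along generically finite semi-conjugacies. Given $(\varphi,\psi)$ as in the statement I would first apply de~Jong alterations to make all of $X_1,X_2,Y_1,Y_2$ smooth projective with $\pi_1,\pi_2,\varphi,\psi$ regular --- this is precisely what forces one to allow $f_i$ and $g_i$ to be correspondences rather than maps. On such models the projection formula $\varphi_*\varphi^*=\deg(\varphi)\cdot\mathrm{id}$, the boundedness of $\varphi^*$ and $\varphi_*$ on degrees of effective cycles, and a comparison of strict transforms under $\varphi$ with those under $f_i^n$, together give $c^{-1}d_p(f_2^n)\le d_p(f_1^n)\le c\,\deg(\varphi)\,d_p(f_2^n)$ for all $n$; when $\varphi$ is birational $\deg(\varphi)=1$ and taking $n$-th roots yields $\lambda_p(f_1|\pi_1)=\lambda_p(f_2|\pi_2)$, with the one-sided inequality surviving in the merely generically finite case. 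Taking $X_2=X_1$ and $\varphi=\mathrm{id}$ between two different smooth regular models then shows that the number produced in part~(1) is independent of the chosen model, which is how $\lambda_p(f|\pi)$ is defined for an arbitrary semi-conjugacy (pass to any smooth model on which $\pi$ becomes regular) and simultaneously gives birational invariance.

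Parts~(3) and~(5) are then short. For part~(3), when $f^n=d_n\Gamma_n$ with $\Gamma_n$ irreducible, $d_p(f^n)$ is $d_n$ times a top intersection number on a desingularization of $\Gamma_n$ of nef classes --- the pullbacks of $\omega_X$ by the two projections of the graph together with the pullback of the ample class of $Y$ --- so the Khovanskii--Teissier inequalities for nef classes make $p\mapsto\log d_p(f^n)$ concave, and letting $n\to\infty$ along the given infinite set transfers concavity to $p\mapsto\log\lambda_p(f|\pi)$. For part~(5), all norms on the finite-dimensional space $N^p_{\R}(X)$ are equivalent, so one may fix a convenient one; since $\omega_X$ is ample, every effective class of bounded degree is dominated by a fixed multiple of $\omega_X^p$, so positivity of $(f^n)^*$ on pseudo-effective classes bounds $\|(f^n)^*_p\|$ above by a constant times $d_p(f^n)$ (with $Y$ a point), while evaluating $(f^n)^*$ at $\omega_X^p$ and pairing with $\omega_X^{k-p}$ bounds it below by a constant times $d_p(f^n)$; taking $n$-th roots gives $\lambda_p(f)=\lim_n\|(f^n)^*_p\|^{1/n}$.

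The real work, and the step I expect to be the main obstacle, is the product formula of part~(4). With $Y$ smooth and $g=ag'$ (so $g^n=a^n(g')^n$ and $d_j(g^n)=a^n d_j((g')^n)$, which is why the two displayed forms are equivalent), the first step is to decompose $\omega_X^p$, up to bounded multiplicative constants, as a sum over $0\le j\le l$ with $0\le p-j\le k-l$ of terms comparable to $(\iota_Y\circ\pi)^*(H_M^j)\cdot\eta_{p-j}$, where $\eta_{p-j}$ is an effective ``vertical'' class of degree $p-j$, using the moving lemma to put cycles in general position relative to $\pi$. For the upper bound $a\lambda_p(f)\le\max_j\lambda_j(g)\lambda_{p-j}(f|\pi)$ one pushes this decomposition through $(f^n)^*$, uses $\pi\circ f=g\circ\pi$ to rewrite the horizontal factor via $(g^n)^*(H_M^j)$, and bounds the horizontal and vertical contributions by $d_j(g^n)$ and $d_{p-j}(f^n|\pi)$ respectively, the cross terms being controlled by submultiplicativity. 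The lower bound is the delicate half: for each $n$ and the maximizing $j$ one must produce a concrete effective cycle on $X$ whose degree realizes $d_j(g^n)\cdot d_{p-j}(f^n|\pi)$ up to a constant, and it is exactly here that smoothness of $Y$ is used (to transport nef classes and their intersection behaviour from $Y$ to $X$ through $\pi$) and that $g$ being a multiple of a map is used (so that $(g^n)^*$ is functorial up to the scalar $a^n$, making the sequence $d_j(g^n)$ genuinely multiplicative). Combining the two bounds yields the formula; the weaker product formula for general $g$ alluded to in the abstract comes from retaining only the submultiplicative half of this argument.
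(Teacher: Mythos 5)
Your treatment of parts~(1), (2), (3) and~(5) tracks the paper closely: submultiplicativity of the degree sequence via Roberts' moving lemma plus Fekete for part~(1); alterations, the push--pull comparison, and a common dominating model to make the definition model-independent and establish birational invariance for part~(2); log-concavity from a Hodge-type index inequality (what you call Khovanskii--Teissier, what the paper calls Grothendieck--Hodge, applied on an alteration of the irreducible graph $\Gamma_{f^{n_i}}$) for part~(3); and the two-sided comparison of norms on $N^p_{\R}(X)$ for part~(5). Two small things you gloss over that the paper handles explicitly: in part~(2) you invoke ``boundedness of $\varphi^*$ and $\varphi_*$ on degrees'' but the actual comparison of degree sequences through a generically finite $\varphi$ is the content of the paper's Lemma~5.5, which requires a careful moving argument and the equality $\lambda_p(h|\pi\circ\varphi)=\lambda_p(f|\pi)$ before you can even say the definition is well-posed; and in part~(3), when $X$ is singular, passing to an alteration $\tau_1:X_1\to X$ destroys irreducibility of the iterates of $\tau_1^*f$, so the index theorem cannot be applied to a single irreducible graph --- the paper has to show that the contributions of the different components of $\Gamma_{(\tau_1^*f)^{n_i}}$ are proportional, with $p$-independent constants, before summing.

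The genuine gap is in part~(4), and it is precisely the point the paper identifies as the new difficulty for correspondences. Your sketch --- decompose $\omega_X^p$ into horizontal/vertical pieces, push through $(f^n)^*$, use $\pi\circ f^n=g^n\circ\pi$ to rewrite the horizontal factor, bound the two contributions --- is the Dinh--Nguyen argument for rational maps. It rests on cycle-level comparisons of the form
$a^n(f^n)^*\bigl(\pi^*(\omega_Y^{j})\wedge\omega_X^{p-j}\bigr)\leq (f^n)^*\bigl(\pi^*(\omega_Y^{j})\bigr)\stackrel{o}{\wedge}(f^n)^*(\omega_X^{p-j})$
and the reverse inequality. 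For a rational map ($a=1$) both hold; the paper explains (in the paragraph preceding Lemma~\ref{LemmaWedgeIntersectionCompatibility}) that for a general correspondence the $\leq$ direction has no reason to hold when $a>1$, and the $\geq$ direction is actually false even for $a$ large (e.g.\ $f=f_1+f_2$ with $f_1,f_2$ rational maps). Saying that ``$(g^n)^*$ is functorial up to the scalar $a^n$'' does not repair this --- the obstruction lives on the $X$ side, in how $(f^n)^*$ interacts with wedge products. The paper's fix is Lemma~\ref{LemmaWedgeIntersectionCompatibility}: it replaces the cycle-level comparison by a comparison of intersection \emph{numbers}
$a^n\langle(f^n)^*(\pi^*\omega_Y^{p})\wedge\omega_X^{q}),\pi^*\omega_Y^{r}\wedge\omega_X^{k-p-q-r}\rangle$
with
$(f^n)^*\pi^*(\omega_Y^{p})\wedge\pi^*(\omega_Y^{r})\stackrel{o}{\wedge}(f^n)^*(\omega_X^q)\wedge\omega_X^{k-p-q-r}$,
and proves it by passing to alterations of each component $\Gamma$ of $\Gamma_{f^n}$ and of the graph $G_n$ of $(g')^n$, the crucial input being that the first projection $G_n\to Y$ is \emph{birational} because $g'$ is a rational map (this is where the hypothesis $g=ag'$ is really used --- not to make $d_j(g^n)$ multiplicative, but to give $\frac{1}{d}p_1^{o}(p_1)_*=\mathrm{id}$ on $\widetilde{G_n}$). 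Your proposal does not contain this idea, and without it the core estimate of part~(4) does not go through.
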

{\bf Remark.} There are abundant of examples for which the theorem can be applied. There are also reducible correspondences $f$, for which the log-concavity fails for its dynamical degrees.  See Remark \ref{Remark1} for more detail. Note that if $\pi :(X,f)\rightarrow (Y,g)$ is a semi-conjugacy, and $f$ is a multiple of a rational map, then so is $g$. However, the reverse is false, that is there are semi-conjugacies where $g$ is  a multiple of a rational map but $f$ is not,  see Remark \ref{Remark1}. 

Under the assumptions of part 4) of Theorem \ref{TheoremRelativeDynamicalDegrees}, it can be easily checked that $\lambda _0(g)=a$, and hence the product formula can be written as follows
\begin{eqnarray*}
\lambda _0(g)\lambda _p(f) = \max _{0\leq j\leq l,~0\leq p-j\leq k-l}\lambda _j(g)\lambda _{p-j}(f|\pi ),
\end{eqnarray*}
for all $p=0,\ldots ,k$. A generalisation of the product formula, stated in terms of inequalities instead of equalities, holds for more general correspondences. 
\begin{theorem}
Let $\pi :(X,f)\rightarrow (Y,g)$ be a semi-conjugacy between dominant correspondences. Assume that $Y$ is smooth and there is an increasing sequence $n_i\in \mathbb{N}$ such that $g^{n_i}$ are irreducible. Then,
\begin{eqnarray*}
\lambda _0(g)\lambda _p(f) \geq \max _{0\leq j\leq l,~0\leq p-j\leq k-l}\lambda _j(g)\lambda _{p-j}(f|\pi ),
\end{eqnarray*}
for all $p=0,\ldots ,\dim (X)$. 

Moreover, if $c$ is any positive number so that 
\begin{eqnarray*}
c\lambda _p(f) \geq \max _{0\leq j\leq l,~0\leq p-j\leq k-l}\lambda _j(g)\lambda _{p-j}(f|\pi ),
\end{eqnarray*}
for all $p=0,\ldots ,\dim (X)$, then $c\geq \lambda _0(g)$.
\label{TheoremWeakerProductFormula}\end{theorem}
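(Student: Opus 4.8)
The plan is to reduce to the case in which $X$ is smooth and $\pi$ is a morphism, and then to sandwich a well-chosen family of mixed intersection numbers between the numbers defining $\lambda_p(f)$ and those defining $\lambda_{p-j}(f|\pi)$, with the discrepancy controlled by the degree data of $g$. For the \emph{reduction}, apply de Jong's alteration to the graph $\Gamma_\pi$ to obtain a smooth projective $X_1$, a generically finite surjective morphism $\varphi\colon X_1\to X$ of degree $e$, and the morphism $\pi_1=\pi\circ\varphi\colon X_1\to Y$; as recalled in the introduction one can choose a dominant correspondence $f_1$ on $X_1$ so that $\varphi\colon(X_1,f_1)\to(X,ef)$ and $\pi_1\colon(X_1,f_1)\to(Y,eg)$ are semi-conjugacies, and $(eg)^{n_i}=e^{n_i}g^{n_i}$ is again irreducible. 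By the construction of the (relative) dynamical degrees via alterations, $\lambda_p(f_1)=e\,\lambda_p(f)$, $\lambda_p(f_1|\pi_1)=e\,\lambda_p(f|\pi)$ and $\lambda_j(eg)=e\,\lambda_j(g)$ for all indices, so both asserted statements for $((X,f),\pi,g)$ are equivalent (after clearing powers of $e$) to the same statements for $((X_1,f_1),\pi_1,eg)$. So assume henceforth that $X$ is smooth and $\pi$ is a morphism; fix very ample classes $\omega_X$ on $X$ and $\omega_Y$ on $Y$ as in part~1, put $\theta=\pi^*\omega_Y$, and for $0\le j\le l$, $0\le p-j\le k-l$ set
\[
m_{p,j}(n)=\deg\bigl((f^n)^*(\omega_X^{p-j}\cdot\theta^{j})\cdot\omega_X^{k-l-p+j}\cdot\theta^{l-j}\bigr),
\]
together with $d_p(n)=\deg((f^n)^*(\omega_X^{p})\cdot\omega_X^{k-p})$ and $r_{p-j}(n)=\deg((f^n)^*(\omega_X^{p-j})\cdot\theta^{l}\cdot\omega_X^{k-l-p+j})$, so that $d_p(n)^{1/n}\to\lambda_p(f)$ and $r_{p-j}(n)^{1/n}\to\lambda_{p-j}(f|\pi)$ by part~1.

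Next I would establish two estimates. \emph{(a)} There is a constant $C>0$ independent of $n$ with $m_{p,j}(n)\le C\,d_p(n)$: for $c$ large $c\,\omega_X-\theta$ is ample, so a positive multiple of $c^{j}\omega_X^{p}-\omega_X^{p-j}\theta^{j}$ is effective, whence $c^{j}(f^n)^*(\omega_X^{p})-(f^n)^*(\omega_X^{p-j}\theta^{j})$ is effective, $(f^n)^*$ sending effective cycles to effective cycles once one puts things in general position by Roberts' moving lemma; intersecting with the nef class $\omega_X^{k-l-p+j}\theta^{l-j}$, itself $\le(c')^{l-j}\omega_X^{k-p}$ modulo nef classes, gives $m_{p,j}(n)\le c^{j}(c')^{l-j}d_p(n)$. \emph{(b)} If $g^n$ is irreducible, then $m_{p,j}(n)=\frac{e_j(g^n)}{d_Y\,\lambda_0(g)^{n}}\,r_{p-j}(n)$, where $e_j(g^n)=\deg_Y((g^n)^*\omega_Y^{j}\cdot\omega_Y^{l-j})$ (so $e_j(g^n)^{1/n}\to\lambda_j(g)$), $d_Y=\deg_Y(\omega_Y^{l})$, and $\lambda_0(g)$ is the topological degree of $g$, characterised by $(g^n)^*[Y]=\lambda_0(g)^{n}[Y]$. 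To prove (b), write $\Gamma_{f^n}=\sum_i\Gamma_i$ with projections $q_1,q_2\colon\Gamma_i\to X$ and $g^n=d_n\Gamma'$ with $\Gamma'\subset Y\times Y$ irreducible of dimension $l$; the semi-conjugacy $\pi\circ f^n=g^n\circ\pi$ forces $\rho_i:=(\pi\circ q_1,\pi\circ q_2)$ to map $\Gamma_i$ dominantly onto $\Gamma'$, so any class pulled back to $\Gamma_i$ from $Y$ through $q_1$ or $q_2$ equals $\rho_i^{*}$ of a class on the $l$-dimensional $\Gamma'$. Evaluating with generic linear sections, and using $\rho_i^{*}[\mathrm{pt}_{\Gamma'}]=[F_i]$ for a general fibre $F_i$ of $\rho_i$, one obtains $(\pi q_2)^{*}\omega_Y^{j}\cdot(\pi q_1)^{*}\omega_Y^{l-j}=\kappa_j[F_i]$ and $(\pi q_1)^{*}\omega_Y^{l}=d_Y\sigma[F_i]$, with $\kappa_j=\deg_{\Gamma'}(\mathrm{pr}_2^{*}\omega_Y^{j}\cdot\mathrm{pr}_1^{*}\omega_Y^{l-j})$ and $\sigma=\deg(\mathrm{pr}_1|_{\Gamma'})$; substituting these into the expansions of $m_{p,j}(n)$ and $r_{p-j}(n)$ over the components $\Gamma_i$, both of which reduce after the substitutions to scalar multiples (by $\kappa_j$, resp.\ $d_Y\sigma$) of the common quantity $\sum_i\nu_i$ with $\nu_i=\deg_{\Gamma_i}(q_2^{*}\omega_X^{p-j}\cdot q_1^{*}\omega_X^{k-l-p+j}\cdot[F_i])$, yields $m_{p,j}(n)=\frac{\kappa_j}{d_Y\sigma}r_{p-j}(n)$. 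Since $e_j(g^n)=d_n\kappa_j$ and $\lambda_0(g)^{n}=d_n\sigma$ (as $(g^n)^{*}[Y]=d_n(\mathrm{pr}_1)_{*}[\Gamma']=d_n\sigma[Y]$), this is the claimed identity.

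To conclude the first inequality, I would take $n=n_i$ in the subsequence where $g^n$ is irreducible: (b) gives $m_{p,j}(n_i)^{1/n_i}\to\lambda_j(g)\,\lambda_{p-j}(f|\pi)/\lambda_0(g)$ (the defining limits over all $n$ exist, so the subsequential limits agree with them), while (a) gives $m_{p,j}(n_i)^{1/n_i}\le(C\,d_p(n_i))^{1/n_i}\to\lambda_p(f)$; hence $\lambda_j(g)\lambda_{p-j}(f|\pi)\le\lambda_0(g)\lambda_p(f)$, and the maximum over admissible $j$ gives the inequality. For the second assertion, specialising the hypothesis on $c$ to $p=0$ leaves only $j=0$, so $c\,\lambda_0(f)\ge\lambda_0(g)\,\lambda_0(f|\pi)$; in the (now regular) case $\lambda_0(f)=\lambda_0(f|\pi)$ equals the topological degree of $f$ — both numbers being $\lim t_n^{1/n}$ with $(f^n)^{*}[X]=t_n[X]$ — which is $\ge1$, so $c\ge\lambda_0(g)$, and the general case follows from the reduction, since if $c$ works for $((X,f),\pi,g)$ then $ec$ works for $((X_1,f_1),\pi_1,eg)$, forcing $ec\ge\lambda_0(eg)=e\lambda_0(g)$.

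I expect the main obstacle to be estimate (b): controlling the images of the components of $\Gamma_{f^n}$ under $(\pi q_1,\pi q_2)$, collapsing the $Y$-pullbacks to $0$-cycles on the $l$-dimensional $\Gamma'$, and making rigorous the manipulations of those $0$-cycle classes under the generally non-flat morphisms $\rho_i,q_1,q_2$ — exactly where Roberts' version of Chow's moving lemma enters, and where the irreducibility of $g^n$ is essential, since it makes the correction factor $\lambda_0(g)^{n}=d_n\sigma$ uniform across all components; without it one can only expect an inequality in (b), and hence only the weaker conclusion stated.
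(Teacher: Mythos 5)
Your proposal reproduces the paper's argument in essentially the same form: you identify the same mixed intersection numbers $m_{p,j}(n)$, you obtain the first inequality by combining a polynomial bound $m_{p,j}(n)\le C\,d_p(n)$ (your (a), which the paper delegates to the structure of Prop.~4.5 in Dinh--Nguyen) with the exact factorisation $m_{p,j}(n)=\frac{e_j(g^n)}{d_Y\,\lambda_0(g)^n}\,r_{p-j}(n)$ for $n$ with $g^n$ irreducible (your (b), which is precisely the paper's alteration computation via $\widetilde{G_n}$, $\widetilde{\Gamma}$ and the identity $\tau^*\tau_*\alpha=\deg(\tau)\alpha$ for $0$-cycles), and for the second assertion you specialise to $p=0$ and use $\lambda_0(f)=\lambda_0(f|\pi)$ exactly as the paper does. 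The only differences are presentational: you use the exact relation $a_nb_n=\lambda_0(g)^n$ where the paper only invokes $\lim (a_nb_n)^{1/n}=\lambda_0(g)$, you spell out (a) via a direct ampleness/base-point-freeness argument rather than citing Dinh--Nguyen, and you sketch (b) on the possibly singular components $\Gamma_i$ while acknowledging that a rigorous version needs the alterations the paper carries out explicitly.
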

{\bf Remark.} From the proof of Theorem \ref{TheoremWeakerProductFormula}, it seems that the theorem does not hold for general semi-conjugacies (over irreducible varieties). At the moment, we cannot  yet find any counter-example for correspondences over irreducible varieties. In the mean time, a counter-example is given in Remark \ref{Remark1} for a totally disjoint union of semi-conjugacies over reducible varieties.  

The next result gives a stronger version of part 2) in Theorem \ref{TheoremRelativeDynamicalDegrees} in which the assumption that $\pi $ is birational is lessen to  requiring only that it is generically finite. It says that we always have $\lambda _p(f_1|\pi _1)\geq \lambda _p(f_2|\pi _2)$, and under some additional conditions equality happens. 
\begin{theorem}
Let be given two semi-conjugacies $\pi _1:(X_1,f_1)\rightarrow (Y_1,g_1)$ and $\pi _2:(X_2,f_2)\rightarrow (Y_2,g_2)$, together with two  semi-conjugacies $\varphi :(X_2,f_2)\rightarrow (X_1,f_1)$ and $\psi : (Y_2,g_2)\rightarrow (Y_1,g_1)$ where the two composition maps $\psi \circ \pi _2$ and $\pi _1\circ \varphi$ are the same. Assume that $\varphi :X_2\rightarrow X_1$ and $\psi :Y_2\rightarrow Y_1$ are generically finite rational maps. 

1) Then we have $\lambda _p(f_1|\pi _1)\geq \lambda _p(f_2|\pi _2)$ for all $p=0,\ldots ,\dim (X_1)-\dim (Y_1)$.

2) Assume moreover that at least one of the following conditions  is satisfied:

a) $X_1$ is smooth and $\pi _1$ is regular;

or 

b) Resolution of singularities is available for varieties of dimension up to $\dim (X_1)$. 

Then $\lambda _p(f_2|\pi _2)=\lambda _p(f_1|\pi _1)$ for all $p=0,\ldots ,\dim (X_1)-\dim (Y_1)$.
\label{TheoremFurtherProperty}\end{theorem}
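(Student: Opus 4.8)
The plan is to compare, through the generically finite maps $\varphi$ and $\psi$, the two systems of degrees that define $\lambda_p(f_1|\pi_1)$ and $\lambda_p(f_2|\pi_2)$, relying on the quantitative estimates for strict transforms and strict intersections proved above (Roberts' version of Chow's moving lemma) and on the birational invariance in Theorem~\ref{TheoremRelativeDynamicalDegrees}(2). First I would make a chain of harmless reductions. By Theorem~\ref{TheoremRelativeDynamicalDegrees}(2) applied along birational modifications we may assume $\varphi\colon X_2\to X_1$ and $\psi\colon Y_2\to Y_1$ are regular morphisms. By construction, $\lambda_p(f_1|\pi_1)$ is computed on a de Jong alteration $\alpha\colon X_1^\flat\to X_1$ with $X_1^\flat$ smooth projective and the induced $\pi_1^\flat$ regular, carrying a lift $f_1^\flat$ with $\alpha\circ f_1^\flat=\deg(\alpha)(f_1\circ\alpha)$; fix such data and ample classes $\omega_{X_1^\flat},\omega_{Y_1^\flat},\omega_{X_2},\omega_{Y_2}$ as in Theorem~\ref{TheoremRelativeDynamicalDegrees}(1). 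To compute $\lambda_p(f_2|\pi_2)$ it is convenient to take the alteration of $X_2$ to be a smooth de Jong alteration $\beta\colon X_2^\flat\to X_2$ of a main component of $X_2\times_{X_1}X_1^\flat$: then $\pi_2$ lifts to a regular $\pi_2^\flat$, there is a generically finite morphism $\gamma\colon X_2^\flat\to X_1^\flat$, and the square formed by $\alpha,\beta,\gamma,\varphi$ commutes.

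\textbf{Core comparison.} Put $W_n:=((f_1^\flat)^{\circ n})^*(\omega_{X_1^\flat}^p)\cdot(\pi_1^\flat)^*(\omega_{Y_1^\flat}^l)$, so that $\deg_{X_1^\flat}(W_n)^{1/n}$ converges to $\lambda_p(f_1|\pi_1)$ up to the normalising constant attached to $\alpha$. Combining $\alpha\circ f_1^\flat=\deg(\alpha)(f_1\circ\alpha)$, $\beta\circ f_2^\flat=\deg(\beta)(f_2\circ\beta)$, $\varphi\circ f_2=f_1\circ\varphi$ with the commuting square shows that $\alpha\circ(\gamma\circ f_2^\flat)$ and $\alpha\circ((\deg(\beta)/\deg(\alpha))\,f_1^\flat\circ\gamma)$ agree as cycles, so $\gamma\circ f_2^\flat$ and $(\deg(\beta)/\deg(\alpha))\,f_1^\flat\circ\gamma$ differ only by cycles annihilated by $\alpha_*$, hence supported on proper subvarieties; the quantitative estimates bound, uniformly in $n$, the influence of these discrepancies on the degrees of the $n$-fold iterated pullbacks. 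It follows that $\gamma^*W_n$ equals, up to a bounded factor and the power $(\deg(\alpha)/\deg(\beta))^n$, the cycle $((f_2^\flat)^{\circ n})^*((\gamma^*\omega_{X_1^\flat})^p)\cdot(\pi_2^\flat)^*((\psi^{\flat*}\omega_{Y_1^\flat})^l)$ on $X_2^\flat$; and since $\gamma$ is generically finite, $\deg_{X_2^\flat}(\gamma^*W_n)\asymp\deg_{X_1^\flat}(W_n)$ (projection formula together with bigness and nefness of $\gamma^*\omega_{X_1^\flat}$, plus a refined B\'ezout bound). Finally $\gamma^*\omega_{X_1^\flat}$ and $\psi^{\flat*}\omega_{Y_1^\flat}$ are dominated by multiples of $\omega_{X_2^\flat}$ and $\omega_{Y_2^\flat}$, and pullbacks along correspondences preserve pseudoeffectivity, so one may pass to $\omega_{X_2^\flat},\omega_{Y_2^\flat}$ at the cost of a further bounded factor, the resulting degree sequence having the exponential rate attached to $\lambda_p(f_2|\pi_2)$. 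Comparing exponential rates and cancelling the normalising degrees yields the inequality of part~1; without resolution of singularities only one direction of the discrepancy estimate survives uniformly in $n$, which is why equality is not obtained here.

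\textbf{Equality.} For part~2(a) one may take $\alpha=\mathrm{id}$, $X_1^\flat=X_1$, $\pi_1^\flat=\pi_1$, so the adjoint pair $(\gamma^*,\gamma_*)$ with $\gamma_*\gamma^*=\deg(\gamma)\,\mathrm{id}$ makes the discrepancy estimate two-sided; this is precisely the comparison lemma for generically finite semi-conjugacies between smooth models with regular base maps that was used to show $\lambda_p$ is well-defined, and it forces $\lambda_p(f_2|\pi_2)=\lambda_p(f_1|\pi_1)$. For part~2(b), resolution of singularities in dimensions $\le\dim(X_1)$ allows every alteration above to be chosen as a birational modification, so Theorem~\ref{TheoremRelativeDynamicalDegrees}(2) applies at each stage and reduces the situation to case~(a).

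\textbf{Main obstacle.} The delicate point throughout is the core comparison: one must prove that the excess-intersection and indeterminacy corrections distinguishing $\gamma\circ(f_2^\flat)^{\circ n}$ from $(\deg(\beta)/\deg(\alpha))^n(f_1^\flat)^{\circ n}\circ\gamma$ have strictly smaller exponential degree growth than the main term. This is exactly the role for which Roberts' moving lemma is designed, and the argument parallels the existence proof for the relative dynamical degrees; the genuinely new feature is that the comparison is transported across a generically finite, rather than birational, morphism $\gamma$, which is the source both of the one-sidedness in part~1 and of the need for the extra hypotheses in part~2.
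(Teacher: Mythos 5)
Your treatment of part 2 is essentially the paper's: under 2(a) you invoke the comparison lemma for generically finite semi-conjugacies between a smooth model with regular base map and an alteration of it (Lemma~\ref{LemmaSemiconjugacyInvariant}), and under 2(b) you reduce to 2(a) by resolving $X_1$ and using the birational invariance of Theorem~\ref{TheoremRelativeDynamicalDegrees}(2). That part is fine in outline.

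The proof of part 1, however, has a genuine gap at the ``core comparison.'' You build two separate alterations $\alpha\colon X_1^\flat\to X_1$ and $\beta\colon X_2^\flat\to X_2$ with a generically finite $\gamma\colon X_2^\flat\to X_1^\flat$, and from the equality $\alpha\circ(\gamma\circ f_2^\flat)=(\deg\beta/\deg\alpha)\,\alpha\circ(f_1^\flat\circ\gamma)$ you conclude that $\gamma\circ f_2^\flat$ and $(\deg\beta/\deg\alpha)\,f_1^\flat\circ\gamma$ ``differ only by cycles annihilated by $\alpha_*$, hence supported on proper subvarieties.'' This is false: $\alpha$ is generically finite but not birational, so equal pushforwards by $1\times\alpha$ in no way force the two graph cycles in $X_2^\flat\times X_1^\flat$ to agree off a proper subvariety. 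For a generic $z\in X_2^\flat$ both $\gamma(f_2^\flat(z))$ and $f_1^\flat(\gamma(z))$ sit inside $\alpha^{-1}$ of the same finite set in $X_1$, but they may be entirely different subsets of that fibre, possibly disjoint. The discrepancy is therefore generic, not exceptional, and Roberts' moving lemma gives you no uniform-in-$n$ control over it. Consequently the assertion that $\gamma^*W_n$ matches the analogous cycle on $X_2^\flat$ up to bounded factors is not established, and the one-sidedness of the inequality is never actually produced by your argument; your closing remark that ``only one direction survives'' is not backed by a mechanism.

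The paper avoids this by working on a single smooth alteration $Z\xrightarrow{\tau_2}X_2$ and setting $\tau_1=\varphi\circ\tau_2$, $h_1=\tau_1^*f_1$, $h_2=\tau_2^*f_2$, so that both lifted correspondences live on $Z$. The engine of part 1 is the effectivity of $\Gamma_{\varphi^*f_1}-\Gamma_{f_2}$ (hence of $\Gamma_{h_1}-\Gamma_{h_2}$, and then of $\Gamma_{h_1^n}-\Gamma_{h_2^n}$ for every $n$), which follows from the pointwise containment $f_2(x)\subset\varphi^{-1}f_1\varphi(x)=(\varphi^*f_1)(x)$ for generic $x$. That containment is the source of the one-sided inequality, giving $\deg(\varphi)\lambda_p(f_1|\pi_1)\geq\lambda_p(f_2|\pi_2)$, and the extraneous $\deg(\varphi)$ is removed by applying the same bound to $f_1^n,f_2^n$ and taking $n$-th roots. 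None of this containment structure appears in your comparison, which is the missing idea. I would suggest scrapping the two-alteration set-up in part 1, pulling both $f_1$ and $f_2$ back to a common smooth alteration of $X_2$, and proving the effectivity of the graph difference directly.
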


Relative dynamical degrees also satisfy a type of triangle inequality. This can be used to produce examples of primitive correspondences, discussed in more details in Section 6.      
\begin{theorem}
Let $f_1,f_2:X\rightarrow X$ be two commutative correspondences (that is, $f_1\circ f_2=f_2\circ f_1$), and $\pi :X\rightarrow Y$ a dominant rational map. Then $\lambda _p(f_1+f_2|\pi )\leq \lambda _p(f_1|\pi )+\lambda _p(f_2|\pi )$ for all $p$.
\label{TheoremTriangleInequality}\end{theorem}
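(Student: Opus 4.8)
The plan is to reduce the problem to the regular case via part 1) of Theorem \ref{TheoremRelativeDynamicalDegrees} and its birational invariance, then exploit the additivity of pullbacks on algebraic cycles together with the commutativity hypothesis to bound the iterates of $f_1+f_2$. First I would apply de Jong's alteration to the graph of $\pi$ to reduce to the situation where $X$ is smooth and $\pi$ is a regular morphism; by part 2) of Theorem \ref{TheoremRelativeDynamicalDegrees} (together with Theorem \ref{TheoremFurtherProperty} if an alteration rather than a birational modification is needed) the relative dynamical degrees $\lambda_p(f_i|\pi)$ and $\lambda_p(f_1+f_2|\pi)$ are unchanged, provided one checks that the correspondences $f_1,f_2$ (and hence $f_1+f_2$) lift to the alteration in a way compatible with $\pi$; this lifting is automatic because a correspondence always lifts after multiplying by the degree of the alteration, and such a scaling changes all the relevant degrees by the same bounded factor, which disappears in the $n$-th root limit.

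Once $X$ is smooth and $\pi$ regular, I would fix embeddings and ample classes as in part 1), write $\omega = \omega_X$, and set $\omega_Y = (\iota_Y\circ\pi)^*(H_M)$, so that
\[
\lambda_p(h|\pi) \;=\; \lim_{n\to\infty} \deg\!\big((h^n)^*(\omega^p)\cdot \omega_Y^{\,l}\big)^{1/n}
\]
for any dominant correspondence $h$ semi-conjugated to some $g$ over $\pi$. The key algebraic fact is that pullback of cycles by correspondences is additive in the correspondence: $(f_1+f_2)^* = f_1^* + f_2^*$ on cycles (up to numerical or rational equivalence, whichever the earlier sections use), and more generally, expanding the $n$-th iterate, $(f_1+f_2)^n = \sum_{w} f_{w_1}\circ\cdots\circ f_{w_n}$ where $w$ ranges over words of length $n$ in $\{1,2\}$; using commutativity $f_1\circ f_2 = f_2\circ f_1$, each such composite equals $f_1^{\,j}\circ f_2^{\,n-j}$ for the appropriate $j = \#\{t: w_t=1\}$, so
\[
(f_1+f_2)^n \;=\; \sum_{j=0}^{n}\binom{n}{j}\, f_1^{\,j}\circ f_2^{\,n-j}.
\]
Applying $(\,\cdot\,)^*$ to $\omega^p$, taking the intersection with $\omega_Y^l$, and using non-negativity of these intersection numbers (all classes involved are effective, as in the setup of part 1)), I get
\[
\deg\!\big((f_1+f_2)^n{}^*(\omega^p)\cdot\omega_Y^l\big)
\;=\; \sum_{j=0}^{n}\binom{n}{j}\,\deg\!\big((f_1^{\,j}\circ f_2^{\,n-j})^*(\omega^p)\cdot\omega_Y^l\big).
\]
Then I would compare each term $\deg((f_1^{\,j}\circ f_2^{\,n-j})^*(\omega^p)\cdot\omega_Y^l)$ with a product of the corresponding quantities for $f_1^j$ and $f_2^{n-j}$; the submultiplicative/pushforward-projection estimates established in the earlier sections for relative degrees under composition (the same estimates underlying existence of the limit in part 1)) give a bound of the shape $C\cdot D_p(f_1^{\,j})\cdot D_p(f_2^{\,n-j})$, where $D_p(h^m) = \deg((h^m)^*(\omega^p)\cdot\omega_Y^l)$ and $C$ is a constant depending only on $\omega,\omega_Y$, not on $j,n$. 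Summing the binomial series then yields $\deg((f_1+f_2)^n{}^*(\omega^p)\cdot\omega_Y^l) \le C\sum_j \binom{n}{j} D_p(f_1^{\,j}) D_p(f_2^{\,n-j})$, and taking $n$-th roots while using $D_p(f_i^{\,m})^{1/m}\to\lambda_p(f_i|\pi)$ together with the elementary fact that $\big(\sum_j \binom{n}{j} a^j b^{n-j}\big)^{1/n}\to a+b$ gives $\lambda_p(f_1+f_2|\pi) \le \lambda_p(f_1|\pi)+\lambda_p(f_2|\pi)$.

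The main obstacle I anticipate is the second displayed comparison, i.e. controlling $D_p(f_1^{\,j}\circ f_2^{\,n-j})$ by $D_p(f_1^{\,j})\cdot D_p(f_2^{\,n-j})$ uniformly in $j$ and $n$. This is exactly the kind of "relative Chow moving lemma / strict transform" estimate that the paper develops to prove submultiplicativity $\lambda_p((h\circ h')|\pi)$-type bounds and the existence of the limit in part 1); the subtlety here is that I need it with constants independent of which iterate is split off where, so I would want to invoke the version of those estimates that is uniform over the intermediate variety (which it is, since $X$ and the embedding are fixed and only the correspondences vary). A secondary technical point is making sure the additive expansion $(f_1+f_2)^n = \sum \binom{n}{j} f_1^j\circ f_2^{n-j}$ is literally valid at the level of cycles on $X\times X$ after composition — this follows from bilinearity of the composition of correspondences in each argument plus the commutativity hypothesis, and I would spell it out by induction on $n$. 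Everything else is the elementary asymptotics of binomial sums and the already-established invariance properties.
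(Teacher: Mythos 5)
Your proposal matches the paper's own proof in all essential respects: reduce to the smooth/regular case via alteration, expand $(f_1+f_2)^n=\sum_i\binom{n}{i}f_1^{\,i}\circ f_2^{\,n-i}$ by commutativity, invoke the submultiplicativity estimate from the proof of Theorem \ref{TheoremRDD4} (applied with $f_1^{\,i}$, $f_2^{\,n-i}$, and their composition in place of $f^n$, $f^m$, $f^{n+m}$, with a constant depending only on $X$, $Y$, $\pi$), and then take $n$-th roots of the binomial sum. The "main obstacle" you flag — uniformity of the constant in $i,n$ — is handled exactly as you anticipate, since the constant in that estimate depends only on the embeddings and $\pi$, not on the correspondences being composed.
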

We remark that the inequality is strict in general. See Remark \ref{Remark1} for more detail. 

{\bf Main idea for the proof of Theorem \ref{TheoremRelativeDynamicalDegrees}.}  

Given any correspondence $f:X\rightarrow X$ and a generically finite rational map $\varphi :Z\rightarrow X$, we can construct a pullback correspondence $\varphi ^*f:Z\rightarrow Z$ so that $\varphi :(Z,\varphi ^*(f))\rightarrow (X,\deg (\varphi )f)$ (see Section 3 for more details). 

By using some appropriate alterations $\tau _1:X_1\rightarrow X$, we can find a semi-conjugacy $\pi _1:(X_1,\tau _1^*f)\rightarrow (Y,\deg (\tau _1)g)$, and moreover the map $\pi _1:X_1\rightarrow Y$ is regular. Using part 1) of Theorem \ref{TheoremRelativeDynamicalDegrees}, we then would like to define
\begin{eqnarray*}
\lambda _p(f|\pi )=\frac{1}{\deg (\tau _1)}\lambda _p(\tau _1^*f|\pi _1). 
\end{eqnarray*}
For this to be well-defined, we need to show that this quantity does not depend on the choice of the alterations. To this end, given any two pullback semi-conjugacies $\tau _1:(X_1,\tau _1^*f)\rightarrow (X,\deg (\tau _1)f)$ and $\tau _2:(X_2,\tau _2^*f)\rightarrow (X,\deg (\tau _2)f)$, we first construct an irreducible smooth projective variety $X_3$ together with generically finite regular morphisms $\varphi _1:X_3\rightarrow X_1$, $\varphi _2:X_3\rightarrow X_2$ and $\tau _3:X_3\rightarrow X$ such that $\varphi _1\circ \tau _1=\tau _3=\varphi _2\circ \tau _2$. (For example this can be done by choosing $X_3$ to be an alteration of a component of the pullback of the diagonal $\Delta _X\subset X\times X$ by the generically finite regular morphism $\tau _1\times \tau _2:X_1\times X_2\rightarrow X\times X$.)  It follows from this that $\deg (\varphi _1)\deg (\tau _1)=\deg (\tau _3)=\deg (\varphi _2)\deg (\tau _2)$. Then by results in Section 3 of this paper, we have two semi-conjugacies $\varphi _1: (X_3,\tau _3 ^*f)\rightarrow (X_1,\deg (\varphi _1)\tau _1^*f)$ and $\varphi _2: (X_3,\tau _3^*f)\rightarrow (X_2,\deg (\varphi _2)\tau _2^*f)$. These semi-conjugacies allow us to relate both $\lambda _p(\tau _1^*f|\pi _1)$ and $\lambda _p(\tau _2^*f|\pi _2)$ to $\lambda _p(\tau _3^*f|\pi _3)$, via the help of part 1). 

This proves the existence of relative dynamical degrees. Part 1) is proven in the course of this process. Part 2) is proven by observing that if $\varphi :(X_2,f_2)\rightarrow (X_1,f_1)$ is a semi-conjugacy, where $\varphi$ is birational, then $f_2=\varphi ^*f_1$. For part 3), let $n_i$ be a sequence of increasing integers so that $f^{n_i}$ are irreducible. If $X$ is smooth, then we can apply the Grothendieck-Hodge index theorem to an alteration of $\Gamma _{f^{n_i}}$, together with part 1) of Theorem \ref{TheoremRelativeDynamicalDegrees} to complete the proof for part 3). When $X$ is not smooth, we observe that if $\tau _1:X_1\rightarrow X$ is an alteration,  then while $(\tau _1^*(f))^{n_i}$ may not be irreducible, the pullbacks by different irreducible components of it are related in such a way that the Grothendieck-Hodge index theorem again applies. Part 3) is optimal, in the sense that there are examples of reducible correspondences whose dynamical degrees do not satisfy the log-concavity, see Remark \ref{Remark1} below.  Part 4), as in \cite{dinh-nguyen, dinh-nguyen-truong1}, essentially follows from estimates on strict intersection and strict pullback by correspondences of algebraic cycles which will be given in Section 4. Note that there are complications when we consider general correspondences $f$ and $g$, since the equality $f^*(\alpha \wedge \beta )=f^*(\alpha )\stackrel{o}{\wedge}f^*(\beta )$ for appropriate positive cycles, which were used in \cite{dinh-nguyen, dinh-nguyen-truong1} for rational maps $f$, simply do not hold for a general correspondence. We are able to prove a weaker version of this necessary intermediate result under the assumption that $g$ is a multiple of a rational map. Part 5) is standard.

\begin{remark} 
\label{Remark1}  
We now give some further remarks on Theorem \ref{TheoremRelativeDynamicalDegrees}.

1) In the case of complex projective manifolds, the dynamical degrees can be also computed using Dolbeault cohomology groups $H^{p,p}(X)$. The proof of Theorem \ref{TheoremRelativeDynamicalDegrees} in that setting is given using properties of positive closed currents. For a smooth projective variety defined on an arbitrary algebraically closed field of characteristic zero, the analogue of $H^{p,p}(X)$ is the algebraic de Rham groups $H^p(X,\Omega _X^p)$. Since we lack the notions of positivity for the classes in $H^p(X,\Omega _X^p)$, it is not clear that dynamical degrees can also be computed using algebraic de Rham groups. The same question can be asked for other cohomology groups, for example $l$-adic cohomology groups. We note that for {\bf automorphisms of surfaces} over an algebraic closed field of arbitrary characteristic, dynamical degrees can be computed by using the $l$-adic cohomology groups as proven in Esnault and Srinivas \cite{esnault-srinivas}.   
    
2) In the literature, the relative dynamical degrees for rational maps were given over fields where resolution of singularities exists (\cite{dinh-nguyen, dinh-nguyen-truong1, truong}) and the dynamical degrees for correspondences were given when $K=\mathbb{C}$ (\cite{dinh-sibony11}). With some efforts, results on fields of characteristic $0$ may be reduced to the case where $K=\mathbb{C}$ via the Lefschetz principle. The pullback of an effective divisor is effective, hence the first dynamical degree $\lambda _1(f)$ of a dominant rational map $f:X\rightarrow X$ (see e.g. \cite{boucksom-favre-jonsson} where the case of surfaces was discussed) and by duality also the dynamical degree $\lambda _{k-1}(f)$ (here $k=\dim (X)$) are well-defined, over arbitrary characteristic. 

3) The following features of our results are new: 

- Correspondences over reducible varieties and their dynamical degrees, even when $K=\mathbb{C}$.

- Relative dynamical degrees for correspondences,  even when $K=\mathbb{C}$ and the involved varieties are irreducible.

- Dynamical degrees for rational maps over a field of arbitrary characteristic.

 - A completely different proof of geometric flavour - for the known results over $K=\mathbb{C}$ - whose estimates  are given explicitly in terms of the dimensions of $X,Y$ and the degrees of them in  given embeddings $X,Y\subset \mathbb{P}^N$.
 
 - Theorem \ref{TheoremFurtherProperty} is stated for the first time, even for the case where $K=\mathbb{C}$ (in this case \cite{dinh-nguyen} observed it for the special case of dynamical degrees of meromorphic maps of compact K\"ahler manifolds). 
 
 - Theorems \ref{TheoremWeakerProductFormula} and \ref{TheoremTriangleInequality} are also stated for the first time, even when $K=\mathbb{C}$. 
  
4) There are abundant of examples for which the above results can be applied. Let $\pi :X\rightarrow Y$ be a dominant rational map and $g:Y\rightarrow Y$ a dominant correspondence. Let $k=\dim (X)$, $l=\dim (Y)$, $\varphi :X\rightarrow \mathbb{P}^{k-l}$ a dominant rational map and $h:\mathbb{P}^{k-l}\rightarrow \mathbb{P}^{k-l}$ a dominant correspondence. Let $\tau :X\rightarrow \mathbb{P}^{k-l}$ be a dominant rational map, we can pullback - via the generically finite dominant rational map $(\pi ,\tau ):X\rightarrow Y\times \mathbb{P}^{k-l}$ (see Section 3 for more details) - the correspondence $g\times h:Y\times \mathbb{P}^{k-l}\rightarrow Y\times\mathbb{P}^{k-l}$ to a correspondence $f:X\rightarrow X$, together with a semi-conjugacy $\pi :(X,f)\rightarrow (Y,a.\deg (\pi ,\tau )g)$, where $a$ is the cardinality of $h(z)$ for a generic point $z\in \mathbb{P}^{k-l}$. 
 
 5) The log-concavity is proven as soon as a version of Grothendieck-Hodge index theorem is available for the graphs of an infinite subset of $\{f^n:~n\in \mathbb{N}\}$, thanks to the existence of the limit in Part 1) of Theorem \ref{TheoremRelativeDynamicalDegrees}. Provided that $f^n$ is irreducible, we can use Grothendieck-Hodge index theorem for an alteration of the graph of $f^n$. This is where the condition of the irreducibility of $f^n$ comes in. The reason for the log-concavity to fail in the case $f$ is reducible lies in the fact that the Grothendieck-Hodge index theorem fails for reducible smooth varieties. 

It is certainly true that the assumption of part 3 in Theorem \ref{TheoremRelativeDynamicalDegrees} does not hold for a general correspondence. For example \cite{dinh-sibony11}, if $f$ is an irreducible correspondence which is symmetric with respect to the diagonal, then $f^2$ contains a multiple of the diagonal, hence is not irreducible. However, it is expected that the assumption holds in many cases of interest, other than the case where $f$ is already a rational map. In fact, we expect that for a generic irreducible correspondence $f:X\rightarrow X$, all iterates $f^n$ are irreducible.

The log-concavity actually fails for a general correspondence even when $X$ is smooth. Here is one counter-example in the case where $f$ is not irreducible. Let $g:X\rightarrow X$ be a dominant rational map such that there is a $p=1,\ldots ,\dim (X)-1$ for which $\lambda _{p-1}(g)+\lambda _{p+1}(g)>2 \lambda _p(g)$. (For example, $g$ is an endomorphism of degree $\geq 2$ on $\mathbb{P}^k$.) Let $a\geq 1$ be an integer, and $f=g+a\Delta _X$, where $\Delta _X$ is the diagonal.  It can be computed that $\lambda _q(f)=\lambda _q(g)+a$ for all $q=0,\ldots ,\dim (X)$. Then, from the assumptions on the dynamical degrees of $g$, it follows that for this $f$ we have $\lambda _{p}(f)^2<\lambda _{p-1}(f)\lambda _{p+1}(f)$ provided $a$ is large enough.  

6) Here we discuss the product formula (part 4) of Theorem \ref{TheoremRelativeDynamicalDegrees} and Theorem \ref{TheoremWeakerProductFormula}) in case the correspondences $g^n$ are always reducible. The proof of Theorem \ref{TheoremWeakerProductFormula}) seems to indicate that its result is false in this case, even if $X$ and $Y$ are irreducible varieties. While we cannot yet find a counter-example in this case, it is easy to construct one in the case the varieties $X$ and $Y$ are reducible. In fact, let $\pi _1:(X_1,f_1)\rightarrow (Y_1,g_1)$ and $\pi _2:(X_2,f_2)\rightarrow (Y_2,g_2)$ be semi-conjugacies of correspondences for which Theorem \ref{TheoremWeakerProductFormula} holds, where $X_1,X_2$ are distinct irreducible varieties of dimension $k$ and $Y_1,Y_2 $ are distinct irreducible varieties of dimension $l$. Let $X=X_1\cup X_2$, $Y=Y_1\cup Y_2$ and $\pi :X\rightarrow Y$ is given by the pair $\pi _1,\pi _2$. Let $f:X\rightarrow X$ be the correspondence given by the pair $f_1,f_2$ and $g:Y\rightarrow Y$ the correspondence given by the pair $g_1,g_2$.  Then it can be checked that $\pi \circ f^n=g^n\circ \pi$ for all $n\in \mathbb{N}$, by the definition in Section 6. Moreover, we have
\begin{eqnarray*}
\lambda _p(f)&=&\max \{\lambda _p(f_1),\lambda _p(f_2)\},\\
\lambda _p(f|\pi )&=&\max\{\lambda _p(f_1|\pi _1),\lambda _p(f_2|\pi _2)\},\\
\lambda _p(g)&=&\max\{\lambda _p(g_1),\lambda _p(g_2)\}. 
\end{eqnarray*}

Let $X_1=\mathbb{P}^3$ and $X_2=$ blowup of $\mathbb{P}^3$ at one point, $Y_1=\mathbb{P}^2$ and $Y_2=$ blowup of $\mathbb{P}^2$ at one point. Let $\pi _1:X_1\rightarrow Y_1$ be the map $[x_0:x_1:x_2:x_3]\mapsto [x_0:x_1:x_2]$, and $\pi _2:X_2\rightarrow Y_2$ the lift of $\pi _1$. Let $f_1:X_1\rightarrow X_1$ be the reverse of, i.e. its graph is the image under the automorphism $(z_1,z_2)\mapsto (z_2,z_1)$ on $X_1\times X_1$ of that of, the map $[x_0:x_1:x_2:x_3]\mapsto [x_0^{d_1}:x_1^{d_1}:x_2^{d_1}:x_3^{d_1}]$, and $g_1$ is the reverse of the map $[x_0:x_1:x_2]\mapsto [x_0^{d_1}:x_1^{d_1}:x_2^{d_1}]$. It can be checked that $\pi _1\circ f_1^n=g_1^n\circ \pi _1$ for all $n\in \mathbb{N}$. Moreover, 
\begin{eqnarray*}
\lambda _0(f_1)&=&d_1^3,~\lambda _1(f_1)=d_1^2,\\
\lambda _0(g_1)&=&d_1^2,~\lambda _1(g_1)=d_1.
\end{eqnarray*}  
 
Similarly, by working with appropriate dominant rational maps on $\mathbb{P}^2$ which have $\lambda _1=\lambda _2=d_2$, we can construct correspondences $f_2:X_2\rightarrow X_2$ and $g_2:Y_2\rightarrow Y_2$ so that $\pi _2\circ f_2^n=g_2^n\circ \pi _2$ for all $n\in\mathbb{N}$, and moreover
\begin{eqnarray*}
\lambda _0(f_2)&=&d_2,~\lambda _1(f_2)=d_2,\\
\lambda _0(g_2)&=&d_2,~\lambda _1(g_2)=d_2.
\end{eqnarray*}  

If Theorem \ref{TheoremWeakerProductFormula} is to hold for $\pi :(X,f)\rightarrow (Y,g)$, we would have in particular
\begin{eqnarray*}
\max \{\lambda _0(g_1),\lambda _0(g_2)\}\max \{\lambda _1(f_1),\lambda _1(f_2)\}&\geq& \max \{\lambda _1(g_1),\lambda _1(g_2)\}\max \{\lambda _0(f_1|\pi _1),\lambda _0(f_2|\pi _2)\}\\
&=&\max \{\lambda _1(g_1),\lambda _1(g_2)\}\max \{\lambda _0(f_1),\lambda _0(f_2)\}.
\end{eqnarray*}
(The equality in the above expression follows from the proof of Theorem \ref{TheoremPrimitiveCorrespondence}.) We will however show that this is not the case. In fact, assume that it were. Then, we would have
\begin{eqnarray*}
\max \{d_1^2,d_2\}\max \{d_1^2,d_2\}\geq \max \{d_1,d_2\}\max \{d_1^3,d_2\}.
\end{eqnarray*}

We note that in the above constructions, $d_1,d_2$ can be chosen to be any positive integer. If we choose them so that $d_1^3>d_2>d_1^2$, we see - as desired - that the above inequality cannot hold. 

7) If $f:X\rightarrow X$ is any correspondence and $\Delta _X$ is the diagonal, then it can be checked that $\lambda _p(f+\Delta _X)=\lambda _p(f)+1$ for all $p=0,\ldots ,\dim (X)$. Hence the inequality in Theorem \ref{TheoremTriangleInequality} may be the equality in certain examples.  

However, the inequality is strict in general. For example, let $f:X\rightarrow X$ be a birational map so that there exists one number $p$ for which $\lambda _p(f)+\lambda _p(f^{-1})>2$. (There are many such maps, e.g. among pseudo-automorphisms in dimension $3$. The latter are birational maps $f:X\rightarrow X$, where $\dim (X)=3$, so that both $f$ and $f^{-1}$ have no exceptional divisors.) Then $f$ and $f^{-1}$ commute,  and 
\begin{eqnarray*}
(f+f^{-1})^n&=&\sum _{i=0}^nC(n,i)f^if^{-n+i}\\
&\leq&C(n,[n/2])\Delta _X+ \sum _{i<n/2}C(n,i)(f^{-1})^{n-2i}+\sum _{i>n/2}C(n,i)f^{2i-n},
\end{eqnarray*} 
where $C(n,i)$'s are binomial coefficients and $\Delta _X$ the diagonal. By using part 1) of Theorem \ref{TheoremRelativeDynamicalDegrees}, it readily follows that $$\lambda _p(f+f^{-1} )\leq \max \{2,\lambda _p(f )+\frac{1}{\lambda _p(f )},\lambda _p(f^{-1} )+\frac{1}{\lambda _p(f^{-1})}\}<\lambda _p(f )+\lambda _p(f^{-1}).$$

8) The assumption that $K$ is algebraically closed and $X,Y$ are projective is essential in several parts in the proofs of our results. First, to be able to define strict transform of correspondences, we need to use that generic fibers of surjective regular morphisms are of correct dimensions (This replaces the Sard's theorem on singularities of regular maps, which holds only in characteristic zero.), see Section 3 for more details. This property is known only for algebraically closed fields and for quasi-projective varieties. Second, we need to use algebraic cycles and the groups $N^p(X),~N^p(Y)$ which have nice properties under the mentioned assumptions on $K$ and $X,Y$. 

However, assuming $Y$ be projective, we can reduce the general case to the case where $K$ is algebraically closed and $X$ is projective. We will elaborate more on this later in the paper. 
\end{remark}

{\bf Organisation of the paper.} The remaining of this paper is organised as follows. In Section 2, we present some preliminaries on algebraic cycles, Roberts' version of Chow's moving lemma and Grothendieck-Hodge index theorem.  In Section 3 we present some facts about correspondences, semi-conjugacies between them and pullbacks of certain diagrams of dominant correspondences. In Section 4, we give some estimates on strict pullbacks of algebraic cycles by correspondences and strict intersection of algebraic cycles. Section 5 is devoted to the proofs of main results. In Section 6, we will discuss the definition of (relative) dynamical degrees when the varieties involved may be reducible or not projective or when the field $K$ is not algebraically closed, and give some applications to primitive correspondences and the simplicity of the first dynamical degree of a rational map. In particular, we will show that all correspondences of surfaces whose dynamical degrees violate the log-concavity (as constructed in Remark \ref{Remark1}) are weakly primitive, and discuss a possible application of our results to the Hurwitz correspondences considered in \cite{ramadas}. We remark that a weakly primitive correspondence (as defined in Section 6) may still be imprimitive if primitivity is defined in the strongest sense, that is to have no semi-conjugacy to any correspondence $g:Y\rightarrow Y$ where $0<\dim (Y)<\dim (X)$. See Example 2 in Section 6 for more detail. 

{\bf Remark.} After an earlier version of this paper was sent out, we were informed by Charles Favre that Nguyen-Bac Dang has been developing an alternative approach for relative dynamical degrees of rational maps on normal projective  varieties. 

{\bf Acknowledgements.} We would like to thank Finnur Larusson for suggesting using de Jong's alteration, which is very crucial in the treatment of this paper. We thank Keiji Oguiso for checking thoroughly several earlier versions of this paper, his interest in the results of the paper and constant encouragement in the course of this work. We would also like to thank Tien-Cuong Dinh, H\'el\`ene Esnault, Charles Favre, Mattias Jonsson, Pierre Milman  and Claire Voisin for their invaluable help.

\section{Algebraic cycles}

Throughout the section, we fix an algebraic closed field $K$ of {\bf arbitrary} characteristic. Recall that a projective manifold over $K$ is a non-singular subvariety of a projective space $\mathbb{P}_K^N$. We will recall the definition and some results on algebraic cycles, the Roberts' version of Chow's moving lemma and the Grothendieck-Hodge index theorem. We then arrive at a useful result on the intersection of two cycles and define a norm $||.||_1$ which will be used in the proof of Theorem  \ref{TheoremRelativeDynamicalDegrees}.

\subsection{Algebraic cycles}
Let $X\subset \mathbb{P}_K^N$ be a projective manifold of dimension $k$ over an algebraic closed field $K$ of arbitrary characteristic. A $q$-cycle on $X$ is a finite sum $\sum n_i[V_i]$, where $V_i$ are $q$-dimensional irreducible subvarieties of $X$ and $n_i$ are integers. The group of $q$-cycles on $X$, denoted $Z_q(X)$, is the free abelian group on the $p$-dimensional subvarieties of $X$ (see Section 1.3 in Fulton \cite{fulton}). A $q$-cycle $\alpha$ is effective if it has the form
\begin{eqnarray*}
\alpha =\sum _i n_i[V_i],
\end{eqnarray*}
where $V_i$ are irreducible subvarieties of $X$ and $n_i\geq 0$.

Let $X$ and $Y$ be projective manifolds, and let $f:X\rightarrow Y$ be a morphism. For any irreducible subvariety $V$ of $X$, we define the pushforward $f_*[V]$ as follows. Let $W=f(V)$. If $\dim (W)<\dim (V)$, then $f_*[V]=0$. Otherwise, $f_*[V]=\deg (V/W)[W]$. This gives a pushforward map $f_*:Z_q(X)\rightarrow Z_q(Y)$ (see Section 1.4 in \cite{fulton}). 

We refer the readers to \cite{fulton} for the definitions of rational and algebraic equivalences of algebraic cycles. Roughly speaking, two algebraic cycles are rationally equivalent if they are elements of a family of algebraic cycles parametrized by $\mathbb{P}^1$. Similarly, two algebraic cycles are algebraically equivalent if they are elements of  a family of algebraic cycles parametrized by a smooth algebraic variety. The groups of  $q$-cycles modulo rational and algebraic equivalences are denoted by $A_q(X)$ and $B_q(X)$.  

We write $Z^p(X)$, $A^p(X)$ and $B^p(X)$ for the corresponding groups of cycles of codimension $p$. Since $X$ is smooth, we have an intersection product $A^p(X)\times A^q(X)\rightarrow A^{p+q}(X)$, making $A^*(X)$ a ring, called the Chow's ring of $X$ (see Sections 8.1 and 8.3 in \cite{fulton}).  

For a dimension $0$ cycle $\gamma =\sum _im_i[p_i]$ on $X$, we define its degree to be $\deg (\gamma )=\sum _im_i$. We say that a cycle $\alpha \in A^{p}(X)$ is numerically equivalent to zero if and only if $\deg (\alpha .\beta )=0$ for all $\beta \in A^{k-p}(X)$ (see Section 19.1 in \cite{fulton}). The group of codimension $p$ algebraic cycles modulo numerical equivalence is denoted by $N^p(X)$. These are finitely generated free abelian groups (see Example 19.1.4 in \cite{fulton}). The first group $N^1(X)$ is a quotient of the Neron-Severi group $NS(X)=B^1(X)$. The latter is also finitely generated, as proven by Severi and Neron. We will use the vector spaces $N^p_{\mathbb{R}}(X)=N^p(X)\otimes _{\mathbb{Z}}\mathbb{R}$ and $N^p_{\mathbb{C}}(X)=N^p(X)\otimes _{\mathbb{Z}}\mathbb{C}$ in defining dynamical degrees and in proving analogs of  Theorems 1.1 and 1.2 in \cite{truong2}.

\begin{remark}
We have the following inclusions: rational equivalence $\subset $ algebraic equivalence $\subset$ numerical equivalence.
\end{remark}

\subsection{Roberts' version of Chow's moving lemma}
\label{SectionRoberts}

Let $X$ be a projective manifold of dimension $k$ over $K$. If $V$ and $W$ are two irreducible subvarieties of $X$, then either $V\cap W=\emptyset$ or any irreducible component of $V\cap W$ has dimension at least $\dim (V)+\dim (W)-k$. We say that $V$ and $W$ are properly intersected if any component of $V\cap W$ has dimension exactly $\dim (V)+\dim  (W)-k$. When $V$ and $W$ intersect properly, the intersection $V.W$ is well-defined as an effective $\dim (V)+\dim (W)-k$ cycle. 

Given $\alpha =\sum _{i}m_i[V_i]\in Z_q(X)$ and $\beta =\sum _{j}n_j[W_j]\in Z_{q'}(X)$, we say that $\alpha .\beta$ is well-defined if every component of $V_i\cap W_j$ has the correct dimension. The usual statement of Chow's moving lemma is that we can always find $\alpha '$ which is rationally equivalent to $\alpha$ so that $\alpha '.\beta $ is well-defined. Since in the sequel we will need to use some quantitative properties of such cycles $\alpha '$,  we will follow the construction of such cycles $\alpha '$ by Roberts \cite{roberts}. See also the paper \cite{friedlander-lawson} for a generalisation to moving families of cycles of bounded degrees. 

Fixed an embedding $X\subset \mathbb{P}^N_K$, we choose a linear subspace $L\subset \mathbb{P}^N_K$ of dimension $N-k-1$ such that $L\cap X=\emptyset$. For any irreducible subvariety $Z$ of $X$, it follows that $L\cap Z=\emptyset$ as well and we denote by $C_L(Z)$ the cone over $Z$ with vertex $L$ (see Example 6.17 in the  book Harris \cite{harris}). For any such $Z$, $C_L(Z).X$ is well-defined and has the same dimension as $Z$, and moreover $C_L(Z).X-Z$ is effective (see Lemma 2 in \cite{roberts}).

Let $Y_1,Y_2,\ldots , Y_m$ and $Z$ be irreducible subvarieties of $X$, here $Y_1,\ldots ,Y_m$ do not need to be closed. We define the excess $e(Z)$ of $Z$ relative to $Y_1,\ldots ,Y_m$ to be the maximum of the integers $$\dim (W)+k-\dim (Z)-\dim (Y_i),$$ 
where $i$ runs from $1$ to $m$, and $W$ runs through all components of $Z\cap Y_i$, provided that one of these integers is non-negative. Otherwise, the excess is defined to be $0$. 

More generally, if $Z=\sum _im_i[Z_i]$ is a cycle, where $Z_i$ are irreducible subvarieties of $X$, we define $e(Z)=\max _ie(Z_i)$. We then also define the cone $C_L(Z)=\sum _im_iC_{L}(Z_i)$. 
         
The Main Lemma (page $93$) in \cite{roberts} says that for any cycle $Z$ and any irreducible subvarieties $Y_1,\ldots ,Y_m$, then $e(C_L(Z).X-Z)\leq \max (e(Z)-1,0)$ for generic linear subspace $L\subset \mathbb{P}^N$ of dimension $N-k-1$ such that $L\cap X=\emptyset$.

Now we can finish the proof of Chow's moving lemma as follows (see Theorem page 94 in \cite{roberts}). Given $Y_1,\ldots ,Y_m$ and $Z$ irreducible varieties on $X$. If $e=e(Z)=0$ then $Z$ intersect properly $Y_1,\ldots ,Y_m$, hence we are done. Otherwise, $e\geq 1$. Applying the main lemma, we can find linear subspaces $L_1,\ldots ,L_e\subset \mathbb{P}^N_K$ of dimension $N-k-1$, such that if $Z_0=Z$ and $Z_i=C_{L_i}(Z_{i-1}).X-Z_{i-1}$ for $i=1,\ldots ,e=e(Z)$, then $e(Z_i)\leq e-i$. In particular, $e(Z_e)=0$. It is easy to see that
\begin{eqnarray*}
Z=Z_0=(-1)^eZ_e+\sum _{i=1}^{e}(-1)^{i-1}C_{L_i}(Z_{i-1}).X.    
\end{eqnarray*} 

It is known that there are points $g\in Aut(\mathbb{P}_K^N)$ such that $(gC_{L_i}(Z_{i-1})).X$ and $(gC_{L_i}(Z_{i-1})).Y_j$ are well-defined for $i=1,\ldots ,e$ and $j=1,\ldots ,m$. We can choose a rational curve in $Aut(\mathbb{P}_K^N)$ joining the identity map $1$ and $g$, thus see that $Z$ is rationally equivalent to
\begin{eqnarray*}
Z'=(-1)^eZ_e+\sum _{i=1}^{e}(-1)^{i-1}(gC_{L_i}(Z_{i-1})).X. 
\end{eqnarray*}
By construction, $e(Z')=0$, as desired. 

\subsection{Grothendieck-Hodge index theorem}

Let $X\subset \mathbb{P}^N_K$ be a projective manifold of dimension $k$. Let $H\subset \mathbb{P}^N_K$ be a hyperplane, and let $\omega _X =H|_X$. We recall that $N^p(X)$, the group of codimension $p$ cycles modulo the numerical equivalence, is a finitely generated free abelian group. We define $N^p_{\mathbb{R}}(X)=N^p(X)\otimes _{\mathbb{Z}}\mathbb{R}$ and $N^p_{\mathbb{C}}(X)=N^p(X)\otimes _{\mathbb{Z}}\mathbb{C}$. These are real (and complex) vector spaces of real (and complex) dimension equal $rank(N^p(X))$. For $p=1$, it is known that $\dim  _{\mathbb{R}} (N^1_{\mathbb{R}}(X)) =rank(NS(X))=:\rho$, the rank of the Neron-Severi group of $X$ (see Example 19.3.1 in \cite{fulton}). 

We define for $u,v\in N^1_{\mathbb{C}}(X)$ the Hermitian form 
\begin{eqnarray*}
\mathcal{H}(u,v)=\deg (u.\overline{v}.\omega _X^{k-2}).
\end{eqnarray*}
Here the degree of a complex $0$-cycle $\alpha +i\beta$ is defined to be the complex number $\deg (\alpha )+i\deg (\beta )$. The analogue of Hodge index theorem for complex  projective manifolds is the Grothendieck-Hodge index theorem (see \cite{grothendieck}), which says that $\mathcal{H}$ has signature $(1,\rho -1)$. 

\subsection{Some norms on the vector spaces $N^p_{\mathbb{R}}(X)$ and $N^p_{\mathbb{C}}(X)$}

Given $\iota :X\subset \mathbb{P}^N_K$ a projective manifold of dimension $k$, let $H\in A^1(\mathbb{P}^N)$ be a hyperplane and $\omega _X=H|_{X}=\iota ^*(H)\in A^1(X)$. For an irreducible subvariety $V\subset X$ of codimension $p$, we define the degree of $V$ to be $\deg (V)=$ the degree of the dimension $0$ cycle $V.\omega _X^{k-p}$, or equivalently $\deg (V)=$degree of the variety $\iota _*(V)\subset \mathbb{P}^N$. Similarly, we define for an effective codimension $p$ cycle $V=\sum _{i}m_i[V_i]$ (here $m_i\geq 0$ and $V_i$ are irreducible), the degree $\deg (V)=\sum _im_i\deg (V_i)$. This degree is extended to vectors in $N^p_{\mathbb{R}}(X)$. Note that the degree map is a numerical equivalent invariant.   

As a consequence of Roberts'  version of Chow's moving lemma, we have the following result on intersection of cycles.

\begin{lemma}
Let $V$ and $W$ be irreducible subvarieties in $X$. Then the intersection $V.W\in A^*(X)$ can  be represented as $V.W=\alpha _1-\alpha _2$, where $\alpha _1,\alpha _2\in A^*(X)$ are effective cycles and $\deg (\alpha _1),\deg (\alpha _2)\leq C\deg (V)\deg (W)$, where $C>0$ is a constant independent of $V$ and $W$. 
\label{LemmaDegreeOfIntersections}\end{lemma}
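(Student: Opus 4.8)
The plan is to run Roberts' construction of Chow's moving lemma recalled in Section~\ref{SectionRoberts}, while keeping quantitative control of every degree that appears. Throughout, degrees are measured in the fixed embedding $\iota:X\subset\mathbb{P}^N_K$, and I write $k=\dim X$, $d=\deg X$. If $V$ and $W$ already intersect properly, then $V.W$ is effective and one may simply take $\alpha_1=V.W$, $\alpha_2=0$; the bound $\deg(V.W)\le C_2\deg(V)\deg(W)$ for a constant $C_2$ depending only on $(X,\iota)$ then follows from (refined) B\'ezout, for instance by identifying $V.W$ with $(V\times W).\Delta_X$ in $X\times X$ embedded via a Segre embedding and invoking classical B\'ezout there. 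So the substance is the general case: apply Roberts' procedure with the single auxiliary subvariety $Y_1=W$ and starting cycle $Z=V$. This produces an integer $e=e(V)$ (the initial excess relative to $W$), effective cycles $Z_0=V,Z_1,\dots,Z_e$ with $Z_i=C_{L_i}(Z_{i-1}).X-Z_{i-1}$, and, after a generic $g\in Aut(\mathbb{P}^N_K)$, a cycle
\[
V'=(-1)^eZ_e+\sum_{i=1}^e(-1)^{i-1}\bigl(gC_{L_i}(Z_{i-1})\bigr).X
\]
that is rationally equivalent to $V$ and has excess $0$ relative to $W$.

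The numerical heart of the argument is then the following bookkeeping. First, the excess is bounded uniformly: any component $U$ of $V\cap W$ gives $\dim U+k-\dim V-\dim W\le k-\max(\dim V,\dim W)\le k$, so the sum defining $V'$ has at most $k+1$ terms. Second, a linear cone $C_L(Z)$ has the same degree as $Z$, automorphisms of $\mathbb{P}^N$ preserve degree, and all intersections $C_L(Z).X$ (also after moving by $g$) are proper in $\mathbb{P}^N$, so by classical B\'ezout $\deg\bigl(C_L(Z).X\bigr)=d\deg(Z)$. Third, $C_L(Z).X-Z$ is effective (Lemma~2 of \cite{roberts}), so $\deg(Z_i)=(d-1)^i\deg(V)\le d^{\,k}\deg(V)$ for all $i\le e$. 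Combining these, each summand of $V'$ is an effective cycle of degree at most $d^{\,k+1}\deg(V)$, and collecting the positive and negative terms yields $V'=V'_+-V'_-$ with $V'_\pm$ effective and $\deg(V'_\pm)\le C_1\deg(V)$ for a constant $C_1=C_1(X,\iota)$ (of the size of $(k+1)d^{\,k+1}$).

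It remains to intersect with $W$. I need that $V'_+$ and $V'_-$ \emph{separately} meet $W$ properly, so that $\alpha_1:=V'_+.W$ and $\alpha_2:=V'_-.W$ are effective cycles with $\deg(\alpha_j)\le C_2\deg(V'_\pm)\deg(W)\le C_1C_2\deg(V)\deg(W)$; granting this, $V.W=V'.W=\alpha_1-\alpha_2$ in $A^*(X)$ and the lemma follows with $C=C_1C_2$. For the term $Z_e$ properness is immediate, as the iteration forces $e(Z_e)=0$. For a cone term I would argue by dimension: every component of $\bigl((gC_{L_i}(Z_{i-1})).X\bigr)\cap W$ lies in $gC_{L_i}(Z_{i-1})\cap W$, which by the genericity of $g$ is a proper intersection in $\mathbb{P}^N$ and hence has dimension $\dim(V)+\dim(W)-k$; since $(gC_{L_i}(Z_{i-1})).X$ is pure of dimension $\dim(V)$ inside $X$, every such component then has dimension exactly $\dim(V)+\dim(W)-k$, i.e. the intersection is proper.

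The step I expect to be the main obstacle is exactly this last one: the assertion $e(V')=0$ only controls the components of the reduced cycle $V'$ and says nothing about possible cancellation between $V'_+$ and $V'_-$, so one genuinely has to return to each individual cone term and verify properness there by the dimension count above; this is also why one cannot hope to replace $V'$ by a single effective representative. The other point requiring care is purely combinatorial: checking that the number of moving steps $e$ and the degree blow-up per step are each controlled by invariants of $X$ alone, so that the final constant $C$ is independent of $V$ and $W$.
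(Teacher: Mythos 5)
Your proof follows the paper's argument essentially verbatim: run Roberts' moving construction on one factor, track the degrees of the cones $C_{L_i}$ (using that a linear cone preserves degree, automorphisms of $\mathbb{P}^N$ preserve degree, $e\le k$, and Roberts' Lemma 2), and then intersect the positive and negative parts separately with $W$ after checking proper intersection term by term; the excess bound, the degree chain $\deg(Z_i)=(d-1)^i\deg V$, and the dimension-count showing each cone term meets $W$ properly in $X$ are all correct and parallel the paper.

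The one step that does not hold up as written is your justification of the Bézout constant $C_2$ for proper intersections inside $X$. Identifying $V.W$ with $(V\times W).\Delta_X$ in $X\times X\subset\mathbb{P}^{N'}$ and ``invoking classical Bézout there'' does not work: $V\times W$ and $\Delta_X$ are both contained in the high-codimension subvariety $X\times X$, so they do \emph{not} meet properly inside $\mathbb{P}^{N'}$ and classical Bézout is unavailable, while refined Bézout controls the geometric degrees of the components of the set-theoretic intersection but not the intersection multiplicities that define the cycle $(V\times W).\Delta_X$. The fix is the cone trick you already use elsewhere, and is exactly what the paper's proof does: for each effective summand $S$ of $V'_\pm$, choose $L$ so that $C_L(S)$ meets $X$ and $W$ properly; since $C_L(S).X-S$ is effective (Roberts' Lemma 2), one gets $\deg(S.W)\le \deg\bigl((C_L(S).X).W\bigr)$, and by the projection formula and Bézout in $\mathbb{P}^N$ the right-hand side equals $\deg(C_L(S))\deg(W)=\deg(S)\deg(W)$, giving $C_2=1$. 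With this substitution your argument is complete and coincides with the paper's, which uses the projection formula directly for the cone terms and applies precisely this auxiliary-cone step to the $W_e$ term.
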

\begin{proof}
Using Roberts' version of Chow's moving lemma, $W$ is rationally equivalent to 
\begin{eqnarray*}
W'=\sum _{i=1}^e(-1)^{i-1}gC_{L_i}(W_{i-1}).X+(-1)^eW_e,
\end{eqnarray*}
where $W_0=W$, $W_i=C_{L_i}(W_{i-1}).X-W_{i-1}$, $C_{L_i}(W_{i-1})\subset \mathbb{P}^N_K$ is a cone over $W_{i-1}$, and $g\in Aut(\mathbb{P}^N_K)$ is an automorphism. Moreover, $gC_{L_i}(W_{i-1}).X$, $gC_{L_i}(W_{i-1}).V$ and $W_e.V$ are all well-defined. We note that $e\leq k=\dim (X)$, and for any $i=1,\ldots ,e$ 
\begin{eqnarray*} 
\deg (W_i)&\leq& \deg (gC_{L_i}(W_{i-1}).X)\leq \deg (gC_{L_i}(W_{i-1}))\deg (X)\\
&=&\deg (C_{L_i}(W_{i-1})).\deg (X)=\deg (W_{i-1})\deg (X).
\end{eqnarray*}
Here we used that $\deg (C_{L_i}(W_{i-1})=\deg (W_{i-1})$ (see Example 18.17 in \cite{harris}), and $\deg (gC_{L_i}(W_{i-1})=\deg (C_{L_i}(W_{i-1})$ because $g$ is an automorphism of $\mathbb{P}^N$ (hence a linear map).  

Therefore, the degrees of $W_i$ are all $\leq (\deg (X))^k\deg (W)$. By definition, the intersection product $V.W\in A^*(X)$ is given by $V.W'$, which is well-defined. We now estimate the degrees of each effective cycle $gC_{L_i}(W_{i-1})|_X.V$ and $W_e.V$. Firstly, we have by the projection formula
\begin{eqnarray*}
\deg (gC_{L_i}(W_{i-1})|_X.V)&=&\deg (\iota _*(gC_{L_i}(W_{i-1})|_X.V))=\deg (gC_{L_i}(W_{i-1}).\iota _*(V))\\
&=&\deg (C_{L_i}(W_{i-1})).\deg (V)\leq \deg (X)^k\deg (W)\deg (V).
\end{eqnarray*}  
Finally, we estimate the degree of $W_e.V$. Since $W_e.V$ is well-defined, we can choose a linear subspace $L\subset \mathbb{P}^N$ so that $C_L(W_e).X$ and $C_L(W_e).V$ are well-defined. Recall that $C_L(W_e)-W_e$ is effective, we have
\begin{eqnarray*}
\deg (V.W_e)\leq \deg (V.C_L(W_e)|_{X})=\deg  (V).\deg (C_L(W_e))\leq \deg (X)^k\deg (V)\deg (W). 
\end{eqnarray*}

From these estimates, we see that we can write
\begin{eqnarray*}
V.W'=\alpha _1-\alpha _2,
\end{eqnarray*}
where $\alpha _1,\alpha _2$ are effective cycles and $\deg (\alpha _1),\deg (\alpha _2)\leq C\deg (V)\deg (W)$, where $C=k.\deg (X)^k$ is independent of $V$ and $W$.
\end{proof}

Using this degree map, we define for an arbitrary vector $v\in N^p_{\mathbb{R}}(X)$, the norm 
\begin{equation}
\|v\| _1=\inf \{\deg (v_1)+\deg (v_2):~v=v_1-v_2,~v_1, v_2\in N^p_{\mathbb{R}}(X) \mbox{ are effective}\}.
\label{LabelDefinitionNorm}\end{equation}
That this is actually a norm follows easily from Lemma \ref{LemmaDegreeOfIntersections} and that the bilinear form $N^p(X)\times N^{k-p}(X)\rightarrow \mathbb{Z}$, $(v,w)\mapsto \deg (v.w)$ is non-degenerate. If $v\in N^p_{\mathbb{R}}(X)$ is effective, then $\|v\|_1=\deg (v)$. Since $N^p_{\mathbb{R}}(X)$ is of finite dimensional, any norm on it is equivalent to $\|\cdot \|_1$. We can also complexify these norms to define norms on $N^p_{\mathbb{C}}(X)$.

\section{Correspondences}

In this section, we present results on correspondences  which are needed to prove Theorem \ref{TheoremRelativeDynamicalDegrees}. We will be mostly concerned with correspondences between quasi-projective varieties over an algebraically closed field $K$. These assumptions allow us to use the following result (Theorem 1.25, Section I.6.3 in \cite{shafarevich}) on dimensions of fibers of surjective regular morphisms. 
\begin{theorem}
Let $K$ be an algebraically closed field, $X,Y$ irreducible quasi-projective varieties over $K$ and $f:X\rightarrow Y$ a surjective regular morphism. Then, there exists a non-empty Zariski open subset $U\subset Y$ such that $\dim (f^{-1}(y))=\dim (X)-\dim (Y)$ for all $y\in U$. 
\label{TheoremFiber}\end{theorem}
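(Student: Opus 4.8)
\medskip

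The statement is the classical theorem on the dimension of the fibres of a dominant morphism, so the plan is simply to recall the usual argument in the form adapted to these hypotheses. Write $r=\dim(X)-\dim(Y)$. First I would reduce to an affine situation: the conclusion is local on the target, so one may replace $Y$ by a nonempty affine open $\mathrm{Spec}\,B\subseteq Y$ and $X$ by its preimage, which is still irreducible of the same dimension and still surjects onto $\mathrm{Spec}\,B$ because $f$ is surjective. Covering this new $X$ by finitely many affine opens $X_j=\mathrm{Spec}\,A_j$, each $X_j$ is dense in $X$, so $\dim(X_j)=\dim(X)$ and $f|_{X_j}$ is dominant, corresponding to an inclusion of domains $B\hookrightarrow A_j$. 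Since $f$ is surjective and $f^{-1}(y)=\bigcup_j (f|_{X_j})^{-1}(y)$, we have
\begin{eqnarray*}
\dim f^{-1}(y)=\max_j \dim (f|_{X_j})^{-1}(y)
\end{eqnarray*}
for every $y\in Y$; hence it is enough to find, for each $j$, a nonempty open $U_j\subseteq Y$ with $\dim (f|_{X_j})^{-1}(y)\le r$ for $y\in U_j$, and then take the intersection of the $U_j$.

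The main step is the analysis of the generic fibre. Let $\eta$ be the generic point of $Y$, so $\kappa(\eta)=K(Y)=\mathrm{Frac}(B)$. The generic fibre of $f|_{X_j}$ is $\mathrm{Spec}\,(A_j\otimes_B K(Y))$, an integral scheme of finite type over $K(Y)$ whose function field is $K(X_j)=K(X)$; therefore its dimension is
\begin{eqnarray*}
\mathrm{trdeg}_{K(Y)}K(X)=\mathrm{trdeg}_{K}K(X)-\mathrm{trdeg}_{K}K(Y)=\dim(X)-\dim(Y)=r.
\end{eqnarray*}
By Noether normalization over the field $K(Y)$ there is a module-finite inclusion $K(Y)[t_1,\dots,t_r]\hookrightarrow A_j\otimes_B K(Y)$. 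Clearing the finitely many denominators from $B$ that occur in the $t_i$, in a finite set of $K(Y)[t_1,\dots,t_r]$-module generators of $A_j\otimes_B K(Y)$, and in the relations expressing a chosen set of $B$-algebra generators of $A_j$ through these, one obtains a nonzero $s\in B$ such that $B_s[t_1,\dots,t_r]\hookrightarrow (A_j)_s$ is module-finite. Set $U_j=\mathrm{Spec}\,B_s$, regarded as an open subset of $Y$. For every $y\in U_j$ the fibre $(f|_{X_j})^{-1}(y)$ has coordinate ring $A_j\otimes_{B_s}\kappa(y)$, which is module-finite over $\kappa(y)[t_1,\dots,t_r]$; since a finite ring extension does not raise Krull dimension, $\dim (f|_{X_j})^{-1}(y)\le r$.

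Finally, set $U=\bigcap_j U_j$, a nonempty Zariski open subset of $Y$; the displayed formula then gives $\dim f^{-1}(y)\le r$ for all $y\in U$. For the reverse inequality I would invoke the standard lower bound: every irreducible component of every nonempty fibre of a dominant morphism $g\colon V\to W$ of irreducible varieties has dimension at least $\dim(V)-\dim(W)$, which follows from Krull's principal ideal theorem applied to (the pullback to $X$ of) a system of parameters of the local ring $\mathcal{O}_{Y,y}$. As $f$ is surjective, $f^{-1}(y)\neq\emptyset$ for $y\in U$, so $\dim f^{-1}(y)\ge r$, and hence $\dim f^{-1}(y)=r$ on $U$. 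I expect the only genuinely nonformal point to be the passage from the generic fibre to a dense open subset of $Y$ --- equivalently, the upper semicontinuity of $y\mapsto\dim f^{-1}(y)$, i.e.\ Chevalley's theorem --- which in the argument above is rendered concrete by clearing denominators in the Noether normalization of the generic fibre; all the ingredients used (Noether normalization over a field, the behaviour of Krull dimension under finite extensions, and the Krull-type lower bound on fibre dimension) are available for quasi-projective varieties over an algebraically closed field, which is why those hypotheses suffice.
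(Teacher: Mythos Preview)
Your proof is correct and follows the standard textbook argument: compute the dimension of the generic fibre via transcendence degree, spread out a Noether normalization of the generic fibre to a principal open of $Y$ to obtain the upper bound, and invoke the Krull-type lower bound on fibre dimension for the other inequality.

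Note, however, that the paper itself does not give a proof of this theorem at all: it is simply quoted as a known result, with a reference to Theorem~1.25, Section~I.6.3 of Shafarevich \cite{shafarevich}. So there is no ``paper's own proof'' to compare against --- your argument is essentially the one in Shafarevich (and in most standard references), and it is fine.
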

In this paper we will need to consider various intersection of varieties, and the condition for these to be well-defined is that the set theoretical intersection has the correct dimension. Theorem \ref{TheoremFiber} helps us to check that condition is satisfied over a non-empty Zariski open set. We do not need the stronger version of it, that is the Sard's theorem, which is available only in characteristic zero.

\subsection{Correspondences}

Let $X$ and $Y$ be quasi-projective varieties of pure dimensions $k$ and $l$. 

First, we assume that $X,Y$ are irreducible. A correspondence $f:X\rightarrow Y$ is represented by its graph $\Gamma _f=\sum _{i=1}^m\Gamma _i$, viewed as an algebraic cycle, where each $\Gamma _i$ is an irreducible subvariety of dimension $k$ of $X\times Y$. We do not assume that the varieties $\Gamma _i$ are distinct. We say that $f$ is irreducible if $\Gamma _f=a\Gamma$ where $a\in \mathbb{N}$ and $\Gamma$ is irreducible. A correspondence is dominant if for each $i$, the two natural projections from $\Gamma _i$ to $X,Y$ are dominant. For example, if $f:X\rightarrow Y$ is a dominant rational map then it is an irreducible dominant correspondence. If $f$ is a correspondence whose graph is $\Gamma _f=\sum _i\Gamma _i$, we denote by $af$ the correspondence whose graph is $\Gamma _{af}=\sum _ia\Gamma _i$. 

A correspondence $f:X\rightarrow Y$ is kind of a multi-value map, which for each $x\in X$ gives $f(x)=\Gamma _{f,x}$ the projection to $Y$ of points in $\Gamma _f$ whose first coordinate is $x$. We need also take care of multiplicities: if several points in $\Gamma _{f}$ (with the same first coordinate $x$) maps to the same point $y\in Y$, then in $f(x)$ we need to list $y$ with the corresponding multiplicity.  

Dominant correspondences can be composed, much as the compositions between dominant rational maps, and the resulting correspondence is also dominant. In fact, this can be done as follows. Let $f:X\rightarrow Y$ and $g:Y\rightarrow Z$ be two dominant correspondences. By Theorem \ref{TheoremFiber}, there are non-empty Zariski open subsets $U\subset X,U'\subset Y$ over which the fibers of the projections $\Gamma _f\rightarrow X$, $\Gamma _g\rightarrow Y$ have the correct dimension, and $f(U)\subset U'$. Then we define for $x\in U$: $g\circ f(x)=g(f(x))$. This is a correspondence from $U$ to $Z$,  taking the closure of the graph of this in $X\times Z$ we obtain the graph of the correspondence $g\circ f:X\rightarrow Z$.  

Given two dominant correspondences $f:X\rightarrow X$ and $g:Y\rightarrow Y$, we say that a dominant rational map $\pi :X\rightarrow Y$ is a semi-conjugacy of $f$ and $g$ if $\pi \circ f=g\circ \pi$, and express this by writing $\pi :(X,f)\rightarrow (Y,g)$. By the definition of compositions of correspondences, we see that $\pi \circ f=g\circ \pi$ iff there is a non-empty Zariski open set $U\subset X$ such that $\pi \circ f(x)=g\circ \pi (x)$ for all $x\in U$. Since the composition of correspondences are associative, we obtain the following result.
\begin{lemma}
 If $(X,f)\rightarrow (Y,g)$ and $(Y,g)\rightarrow (Z,h)$ are semi-conjugate, then the induced $(X,f)\rightarrow (Z,h)$ is also semi-conjugate. 
\label{LemmaSemiconjugacyAssociativity}\end{lemma}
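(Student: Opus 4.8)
The plan is to reduce everything to the associativity of composition of dominant correspondences, which has already been recorded in the discussion immediately preceding the lemma. Write $\pi : X \to Y$ and $\sigma : Y \to Z$ for the dominant rational maps realizing the two given semi-conjugacies, so that $\pi \circ f = g \circ \pi$ and $\sigma \circ g = h \circ \sigma$ as dominant correspondences $X \to Y$ and $Y \to Z$. The induced map from $(X,f)$ to $(Z,h)$ is $\sigma \circ \pi$, and the first thing to check is that $\sigma \circ \pi$ is again a dominant rational \emph{map}: both $\pi$ and $\sigma$ are regular on dense open subsets $V \subset X$, $W \subset Y$, and since $\pi$ is dominant we may shrink so that $\pi(V) \subset W$; then $x \mapsto \sigma(\pi(x))$ is single-valued on $V$, hence a rational map, and it is dominant because $\sigma$ and $\pi$ are.

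Next I would write down the chain of identities of dominant correspondences $X \to Z$:
\[
(\sigma \circ \pi) \circ f \;=\; \sigma \circ (\pi \circ f) \;=\; \sigma \circ (g \circ \pi) \;=\; (\sigma \circ g) \circ \pi \;=\; (h \circ \sigma) \circ \pi \;=\; h \circ (\sigma \circ \pi),
\]
where the first, third and last equalities are instances of associativity of composition of dominant correspondences, while the middle two equalities are the two hypotheses $\pi \circ f = g \circ \pi$ and $\sigma \circ g = h \circ \sigma$. All intermediate correspondences occurring here are dominant, since compositions of dominant correspondences are dominant and $\pi,\sigma$ are dominant rational maps, so associativity applies throughout. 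This shows that $\sigma \circ \pi$ semi-conjugates $(X,f)$ and $(Z,h)$, i.e. $\sigma \circ \pi : (X,f) \to (Z,h)$ is a semi-conjugacy.

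The only point needing care — and it is the main (minor) obstacle — is the bookkeeping of domains of definition: composition of correspondences is defined only after passing to non-empty Zariski open subsets on which the relevant projections of graphs have fibers of the expected dimension (Theorem \ref{TheoremFiber}), and then taking closures in the ambient product. Since a finite intersection of dense open subsets is again dense and open, I would fix a single non-empty Zariski open $U \subset X$ on which $\pi$, $\sigma$, $f$, $\pi\circ f = g\circ\pi$, and all the composites appearing above are simultaneously defined as (multi-valued) maps, verify the equality $(\sigma \circ \pi)\circ f(x) = h\circ(\sigma\circ\pi)(x)$ for every $x \in U$ by associativity of the pointwise composition of multi-valued maps, and then take closures of graphs in $X \times Z$. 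No ideas beyond those already present in the excerpt are required.
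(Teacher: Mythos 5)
Your proof is correct and is essentially the paper's own argument spelled out in full: the paper simply remarks, immediately before stating the lemma, that the result follows because composition of dominant correspondences is associative, and your chain of five equalities together with the domain-bookkeeping paragraph is exactly the expansion of that remark.
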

In particular, if $\pi :(X,f)\rightarrow (Y,g)$ is a semi-conjugacy, then $\pi \circ f^n=g^n\circ \pi$ for all $n\in \mathbb{N}$. This fact, however, is not true in general if we consider correspondences over reducible varieties, see below. 

The above definitions can be extended to the case where $X$ and $Y$ are reducible. In this case, let $X_1,\ldots ,X_m$ be components of $X$ and $Y_1,\ldots ,Y_p$ components of $Y$. Then a correspondence between $X$ and $Y$ is a collection of correspondences $f_{i,j}:X_i\rightarrow Y_j$, $(i,j)\in I$ where $I\subset \{1,\ldots ,m\}\times \{1,\ldots ,p\}$. The correspondence is dominant iff each $f_{i,j}$ is dominant, the union of all the domains of the collection is dense in $X$, and the union of all the ranges of the collection is dense in $Y$. If $f:X\rightarrow X$ is a dominant correspondence, then we can iterate $f$ in an obvious way. More precisely, if $f_{i,j}$ and $f_{k,l}$ are two irreducible component of $f$, we compose $f_{i,j}f_{k,l}$ whenever the range of $f_{k,l}$ is the domain of $f_{i,j}$, otherwise we do not compose them.   

A semi-conjugacy between two dominant correspondences $(X,f)$ and $(Y,g)$ is then a dominant rational map $\pi :X\rightarrow Y$, which in this case means a collection of dominant rational maps $\pi _i:X_i\rightarrow Y_{a(i)}$ for $i=1,\ldots ,m$ such that $Y=\bigcup _{i}Y_{a(i)}$ (hence each $X_i$ is in the domain of exactly one rational map $\pi _i$, while each $Y_i$ may be in the range of different such maps), satisfying  $\pi \circ f^n=g^n\circ \pi $ for all $n\in \mathbb{N}$.

{\bf Remark.} There is a subtlety in defining semi-conjugacy for correspondences on reducible varieties, as the readers may notice. In this general case, by the very definition of the composition of correspondences, if we have $\pi \circ f=g\circ \pi$, it is not assured that we also have $\pi \circ f^2=g^2\circ \pi$. For example, assume that $X=X_1\cup X_2$ has two irreducible components,  $Y$ is irreducible, and $\pi $ is a dominant rational map $\pi =(\pi _1,\pi _2):(X_1,X_2)\rightarrow Y$. Assume that there are correspondences $f_{1,2}:X_1\rightarrow X_2$, $f_{2,2}:X_2\rightarrow X_2$ and $f_{2,1}:X_2\rightarrow X_1$ each semi-conjugate to the same correspondence $(Y,g)$. Then it can be checked that $f=2f_{1,2}+f_{2,1}+f_{2,2}:X\rightarrow X$ is semi-conjugate to $2g$. Hence, $f^2=2f_{1,2}\circ f_{2,2}+2f_{2,1}\circ f_{1,2}+f_{2,1}\circ f_{2,2}$ while $(2g)^2=4g^2$. From this, we can check that $\pi\circ  f^2\not= g^2\circ \pi$. 

To work with semi-conjugacies of correspondences over reducible varieties, it is convenient to consider semi-conjugacies of pairs. Let $f:X_1\rightarrow X_2$ and $g:Y_1\rightarrow Y_2$ be dominant correspondences between irreducible varieties, where $\dim (X_1)=\dim (X_2)$ and $\dim (Y_1)=\dim (Y_2)$.  A semi-conjugacy between $(X_1,X_2,f)$ and $(Y_1,Y_2,g)$ is then a pair of dominant rational maps $\pi _1:X_1\rightarrow Y_1$ and $\pi _2:X_2\rightarrow Y_2$ such that $\pi _2\circ f=g\circ \pi _1$. We describe this fact by writing $(\pi _1,\pi _2):(X_1,X_2,f)\rightarrow (Y_1,Y_2,g)$.

The following result characterises semi-conjugate correspondences of pairs. 
\begin{lemma}
Let $(\pi _1,\pi _2):(X_1,X_2,f)\rightarrow (Y_1,Y_2,g)$ be a semi-conjugacy, where $\dim (X_1)=\dim (X_2)$ and $\dim (Y_1)=\dim (Y_2)$. Write $\Gamma _g=\sum _{i}a_iG_i $, where $G_i$ are distinct irreducible varieties and $a_i\in \mathbb{N}$ is the multiplicity of $G _i$. For a generic $x\in X$, the cardinality of $f(x)\cap \pi _2^{-1}(z)$ is $a_i$, for every $z\in g\pi _1(x)$ such that $(\pi _1(x),z)\in G _i$. 
 \label{LemmaSemiconjugacyConstantProperty}\end{lemma}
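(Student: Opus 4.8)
The plan is to reduce the assertion to a comparison of zero-cycles on $Y_2$ over a single generic point $x$ of $X_1$. First I would use Theorem \ref{TheoremFiber} — applied to the projections $\Gamma_f\to X_1$ and $G_i\to Y_1$, $G_i\to Y_2$ — together with the dominance of $f$, $g$ and $\pi_1$, to produce a dense Zariski-open $U\subseteq X_1$ such that for every $x\in U$ the following hold: $\pi_1$ is defined at $x$ and $y:=\pi_1(x)$ is a generic point of $Y_1$; the value $f(x)$ is an honest zero-cycle on $X_2$ whose support lies in the domain of $\pi_2$; the value $g(y)=\sum_i a_i\,(G_i)_y$ is an honest zero-cycle on $Y_2$; the supports of the cycles $(G_i)_y$ are pairwise disjoint — which is possible because for $i\ne j$ the set $G_i\cap G_j$ is a proper closed subvariety of the irreducible variety $G_i$ of dimension $\dim(Y_1)$, hence maps into a proper closed subset of $Y_1$ that we delete from the good locus; and finally the identity $\pi_2\circ f=g\circ\pi_1$ — which by definition is an equality of graphs in $X_1\times Y_2$ — holds pointwise at $x$.

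Next, for $x\in U$ I would evaluate both sides of this pointwise identity. Composing the correspondence $f$ with the single-valued map $\pi_2$ on the outside, one has $(\pi_2\circ f)(x)=\pi_2\bigl(f(x)\bigr)$, so writing $f(x)=\sum_w m_w[w]$ gives $(\pi_2\circ f)(x)=\sum_w m_w\,[\pi_2(w)]$; hence the coefficient of a point $z$ in this zero-cycle equals $\sum_{w:\,\pi_2(w)=z}m_w$, which is precisely the number of points of $f(x)$ lying in $\pi_2^{-1}(z)$, counted with the multiplicities of $f(x)$. Composing $g$ with the single-valued map $\pi_1$ on the inside, one has $(g\circ\pi_1)(x)=g(\pi_1(x))=g(y)=\sum_i a_i\,(G_i)_y$; by the disjointness arranged above, a point $z$ of the support lies in $(G_i)_y$ for exactly one index $i$ — namely the one with $(\pi_1(x),z)\in G_i$ — and there the coefficient is $a_i$ (in positive characteristic one reads $(G_i)_y$ as a cycle; it is a reduced set of $\deg(G_i/Y_1)$ points once one also removes from $U$ the ramification locus of $G_i\to Y_1$, which is possible whenever these maps are separable, automatic in characteristic $0$).

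Equating the two zero-cycles $(\pi_2\circ f)(x)=(g\circ\pi_1)(x)$ on $Y_2$ and comparing the coefficient of each $z$ in the support then gives $\sum_{w\in f(x):\,\pi_2(w)=z}m_w=a_i$ with $i$ as above, which is the claim. I expect the only real work to lie in the bookkeeping of the first paragraph: the composition of correspondences is defined through Zariski closures and Theorem \ref{TheoremFiber} rather than pointwise, so one must pin down a dense open $U$ on which that definition literally coincides with the naive fibrewise picture, and — in positive characteristic — keep the multiplicities of the fibres $(G_i)_y$ and of $f(x)$ straight; this is why I would run the whole comparison at the level of zero-cycles and only translate back to cardinalities at the very end.
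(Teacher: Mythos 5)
Your proof is correct and follows essentially the same route as the paper's: evaluate the pointwise identity $\pi_2 f(x)=g\pi_1(x)$ at a generic $x$ and compare the coefficient of each $z$ in the two resulting zero-cycles. You supply some bookkeeping the paper leaves implicit --- most usefully the observation that for generic $y=\pi_1(x)$ the fibres $(G_i)_y$ are pairwise disjoint, which is needed to read the coefficient of $z$ in $g(y)$ as a single $a_i$ rather than a sum over several $i$ --- while the separability caveat in positive characteristic is a harmless overcaution given the paper's set-theoretic-with-multiplicity convention for $g(y)$.
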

\begin{proof}
Let $x\in X$ be generic. Define $y=\pi _1(x)$. Then by assumption, we have
\begin{eqnarray*}
\pi _2f(x)=g\pi _1(x)=g(y).
\end{eqnarray*}
If $(y,z)\in G _i$ then $z$ appears in $g(y)$ with multiplicity $a_i$. From this, it follows that there are exactly $a_i$ points (counted with multiplicities) in $f(x)$ which is mapped to $z$ by the map $\pi _2$. In other words, $\sharp f(x)\cap \pi _2^{-1}(z)=a_i$. 
\end{proof}

%When working with correspondences of reducible varieties, the following situation often happens. Let $X$ and $Y$ be reducible varieties, and $\pi :X\rightarrow Y$ a dominant rational map. Given $g:Y\rightarrow Y$ a correspondence. Assume that we can decompose $g=g_1+g_2+\ldots +g_q$, and there are correspondences $f_1,f_2,\ldots ,f_q:X\rightarrow X$ together with positive integers $d_1,\ldots ,d_q$ such that we have semi-conjugacies $\pi :(X,f_i)\rightarrow (Y,d_ig_i)$ for all $i=1,\ldots ,q$. For example, if $g_i$ is irreducible then we can use the pullback operators in the next subsection to construct such $f_i$'s. We would like then to construct a correspondence $f:X\rightarrow X$ which is semi-conjugate under $\pi$ to a multiple of $g$. The next lemma provides a way to do so. 
%\begin{lemma}
%Let $X$ and $Y$ be varieties of pure dimensions, and $\pi :X\rightarrow Y$ a dominant rational map. Let $\pi :(X,f_i)\rightarrow (Y,d_ig_i)$ be semi-conjugacies for $i=1\ldots ,q$. Let $d=\prod _{i=1}^qd_i$, and 
%\begin{eqnarray*}
%f&=&\sum _{i=1}^qdf_i/d_i,\\
%g&=&\sum _{i=1}^qg_i.
%\end{eqnarray*}
%Then we have a semi-conjugacy $\pi :(X,f)\rightarrow (Y,dg)$.
%\label{LemmaCollectionSemiconjugacy}\end{lemma}
%\begin{proof}
%This is obvious. 
%\end{proof}

\subsection{Pullback of correspondences by rational maps}

In this section we define certain pullback operators of correspondences by dominant rational maps. Case 2 is not needed in the sequel, since in general the pullback correspondence is over a reducible variety, but we include it here since it seems interesting by itself. 

{\bf Case 1:} Equi-dimensional pullback. 

Let $\pi =(\pi _1,\pi _2):(X_1,X_2)\rightarrow (Y_1,Y_2)$ be a dominant rational map of pairs, and $g:Y_1\rightarrow Y_2$  a dominant correspondence. Assume that $\dim (X_1)=\dim (X_2)=\dim (Y_1)=\dim (Y_2)$. The pullback of $g$ by $\pi$, denoted by  $f=\pi ^*g:X_1\rightarrow X_2$ is defined as follows: for $U\subset X_1$ a suitable non-empty Zariski open set, we define $f(x_1)=\pi _2^{-1}g\pi _1(x_1)$. The graph of $f$ is then the strict pullback  of the graph of $g$ by the generically finite rational map $\pi _1\times \pi _2:X_1\times X_2\rightarrow Y_1\times Y_2$. Since $X_1,X_2,Y_1,Y_2$ are of the same dimension, the cardinality of $f(x_1)$ is finite.   
\begin{lemma}
Let $a=$ the degree of $\pi _2$, that is the cardinality (counted with multiplicities) of a generic fiber of $\pi _2$. We have $\pi _2\circ f=a(g\circ \pi _1)$. In other words, we have a semi-conjugacy $\pi :(X_1,X_2,f=\pi ^*g)\rightarrow (Y_1,Y_2,ag)$.
\label{LemmaSemiconjugacy1}\end{lemma}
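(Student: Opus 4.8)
\textbf{Proof plan for Lemma \ref{LemmaSemiconjugacy1}.}

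The plan is to work over a non-empty Zariski open set $U\subset X_1$ chosen so that everything in sight behaves generically, and then to verify the identity $\pi_2\circ f = a\,(g\circ\pi_1)$ as an equality of correspondences by checking it fiberwise on $U$. Concretely, by Theorem \ref{TheoremFiber} applied to the various projection morphisms from the graphs $\Gamma_g$, $\Gamma_{\pi_1}$, $\Gamma_{\pi_2}$ and from the graph of $f=\pi^*g$ (which, by definition, is the strict pullback of $\Gamma_g$ under the generically finite map $\pi_1\times\pi_2$), one can find $U$ such that: $\pi_1$ is defined and has finite fibers of the expected cardinality over $U$; $g$ is defined with fibers of the correct (zero) dimension over $\pi_1(U)$; $\pi_2$ is defined with generic fiber of cardinality exactly $a=\deg(\pi_2)$ over $g\pi_1(U)$; and the strict pullback defining $f$ agrees over $U$ with the naive formula $f(x_1)=\pi_2^{-1}\bigl(g\pi_1(x_1)\bigr)$ as a cycle. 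This last point is where the hypothesis $\dim X_1=\dim X_2=\dim Y_1=\dim Y_2$ is used: it guarantees $f(x_1)$ is a $0$-cycle, so all the "intersections with the correct dimension" are automatic on a suitable open set.

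Next I would compute both sides at a generic $x_1\in U$. Write $y_1=\pi_1(x_1)$ and let $g(y_1)=\sum_j b_j z_j$ be the $0$-cycle (with $z_j$ distinct, $b_j\in\mathbb N$). On the right-hand side, $g\circ\pi_1(x_1)=g(y_1)=\sum_j b_j z_j$, so $a\,(g\circ\pi_1)(x_1)=\sum_j a b_j z_j$. On the left-hand side, $f(x_1)=\pi_2^{-1}\bigl(\sum_j b_j z_j\bigr)=\sum_j b_j\,\pi_2^{-1}(z_j)$ as a $0$-cycle on $X_2$; then $\pi_2\circ f(x_1)=\pi_2\bigl(\sum_j b_j\,\pi_2^{-1}(z_j)\bigr)$. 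For generic $x_1$ the points $z_j$ lie in the open locus over which $\pi_2$ has fiber cardinality exactly $a$ (counted with multiplicity, i.e. $\pi_{2*}\pi_2^{*}$ acts as multiplication by $a$ on $0$-cycles supported there), so $\pi_2\bigl(\pi_2^{-1}(z_j)\bigr)=a z_j$ for each $j$. Therefore $\pi_2\circ f(x_1)=\sum_j a b_j z_j$, which matches the right-hand side. Since the two correspondences agree on the non-empty Zariski open set $U$, they agree as correspondences, i.e. their graphs (as cycles on $X_1\times Y_2$) coincide after taking closures; this is exactly the criterion recalled before Lemma \ref{LemmaSemiconjugacyAssociativity}. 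Finiteness of $f(x_1)$ was already noted and follows from the equidimensionality assumption. This gives the semi-conjugacy $\pi:(X_1,X_2,f=\pi^*g)\rightarrow(Y_1,Y_2,ag)$.

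The main obstacle I anticipate is purely bookkeeping rather than conceptual: one must be careful that the degree $a$ is genuinely realized \emph{with multiplicities} on the relevant open set, since $\pi_2$ could a priori be inseparable or ramified, and one must ensure the open set $U$ is chosen after $\pi_1$ so that the points $z_j=$ (points of $g\pi_1(x_1)$) avoid both the indeterminacy locus of $\pi_2$ and its ramification/branch locus; this is where invoking Theorem \ref{TheoremFiber} (in place of a Sard-type statement, which fails in positive characteristic) is essential. A second, minor point is to confirm that the strict-pullback definition of $\Gamma_f$ really does restrict over $U$ to the cycle $\sum_j b_j\pi_2^{-1}(z_j)$ with the stated multiplicities $b_j$ — i.e. that no extra components or multiplicity corrections appear. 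This again follows from genericity and equidimensionality: over $U$ the scheme-theoretic fiber $\Gamma_f\cap(\{x_1\}\times X_2)$ is zero-dimensional and the projection $\Gamma_f\to\Gamma_g$ is finite birational over the relevant locus, so multiplicities transport correctly. Once these two checks are in place the identity is immediate.
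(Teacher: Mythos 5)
Your argument is correct and is essentially the same as the paper's: both proofs evaluate $\pi_2\circ f$ at a generic point $x_1$, write $y_1=\pi_1(x_1)$, and use that $\pi_2\circ\pi_2^{-1}$ acts as multiplication by $a=\deg(\pi_2)$ on a generic $0$-cycle to conclude $\pi_2\circ f(x_1)=a\,g(y_1)=a\,g\circ\pi_1(x_1)$. Your write-up simply makes explicit the bookkeeping (choice of Zariski-open set $U$ via Theorem~\ref{TheoremFiber}, multiplicities $b_j$ in $g(y_1)$, and agreement of the strict pullback defining $\Gamma_f$ with the naive fiberwise formula) that the paper leaves implicit in its one-line computation.
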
  
\begin{proof}
For a generic point $x_1\in X_1$, put $y_1=\pi _1(x_1)$ we have by definition
\begin{eqnarray*}
\pi _2\circ f(x_1)=\pi _2\circ \pi _2^{-1}g(y_1)=a (g(y_1))=ag\circ \pi _1(x_1),
\end{eqnarray*} 
as wanted. 
\end{proof}

{\bf Case 2:} Pullback of a triple of correspondences. This is more subtle than the pullback of a triple of maps. 

Let be given two dominant semi-conjugacies $\pi :(X_1,X_2,f)\rightarrow (Y_1,Y_2,g)$ and $\psi :(W_1,W_2,h)\rightarrow (Y_1,Y_2,g)$, where $X_1,X_2,Y_1,Y_2,W_1,W_2$ are of pure dimensions, $\dim (X_1)=\dim (X_2)$, $\dim (Y_1)=\dim (Y_2)$ and $\dim (W_1)=\dim (W_2)$. We can pullback this triple to a dominant correspondence $u:Z_1\rightarrow Z_2$ for which there are semi-conjugacies $(Z_1,Z_2,u)\rightarrow (X_1,X_2,af)$ and $(Z_1,Z_2,u)\rightarrow (W_1,W_2,ah)$ where $a$ is a positive integer depending only on the correspondence $g:Y_1\rightarrow Y_2$. Note that in general $Z_1$ and $Z_2$ may be reducible. 

The following construction, which is done under the additional assumption that $X_1=X_2=X$, $Y_1=Y_2=Y$ and $W_1=W_2=W$, extends easily to the general case. 

- Defining $Z$. On an appropriate non-empty Zariski open sets $U\subset X$ and $U'\subset W$, the (reduced) variety $\{(x,w)\in U\times U':~\pi (x)=\psi (w)\}$ is of pure dimension $\dim (X)+\dim (W)-\dim (Y)$. Let $Z$ be the closure of this variety in $X\times W$.  In other words, $Z$ is the usual pullback of the triple $\pi :X\rightarrow Y$ and $\psi:W\rightarrow Y$. We note that in general $Z$ is not irreducible. 

We denote the natural projections from $Z$ to $X,Y$ by $\varphi :Z\rightarrow X$ and $\pi _1:Z\rightarrow W$.

- Defining the correspondence $u:Z\rightarrow Z$.

Let $\Gamma $ be the closure in $X\times W\times X\times W$ of the set $$\{(x,w,x',w'):~(x,x')\in V,(w,w')\in V',~\pi (x)=\psi (w), ~\pi (x')=\psi (w')\},$$
where $V$ is an appropriate Zariski open set of $\Gamma _f$ and $V'$ is an appropriate Zariski open set of $\Gamma _{h}$. We can choose for example $V=\{(x,x')\in \Gamma _f, x\in U\}$ for an appropriate non-empty Zariski open set $U\subset X$, and similarly  $V'=\{(w,w')\in \Gamma _{h},~w\in U'\}$ for some appropriate non-empty Zariski open set $U'\subset W$. 

We have the following properties of $\Gamma$.

a) $\Gamma \subset Z\times Z$: This is clear from the definition. 

b) $\Gamma$ is non-empty and of pure dimension $=$ $\dim (Z)$: Let $(x,w)\in U\times U'$, and $y=\pi (x)=\psi (w)$. Then, from
\begin{eqnarray*}
\pi f(x)=g(y)=\psi h(w),
\end{eqnarray*}
thanks to the two semi-conjugacies $(X,f)\rightarrow (Y,g)$ and $(W,h)\rightarrow (Y,g)$, it follows that for any $x'\in f(x)$ there is $w'\in h(w)$ such that $\pi (x')=\psi (w')$, and vice versa. Thus $\Gamma$ is non-empty. Moreover, since both $f(x)$ and $h(w)$ have bounded cardinalities, it follows that $\Gamma $ has dimension $=$ $\dim (Z)$.  

$\Gamma$ is the support for the graph $\Gamma _{u}$ of the correspondence $u$ which we sought to define. We will now need to  choose appropriately the multiplicities of the irreducible components of $\Gamma$. 

Let $\Gamma _g=\sum _{i\in I}a_iG _i$, where $I$ is a finite set, $\Gamma _i$ are distinct irreducible varieties and $a_i\in \mathbb{N}$. We define $a(g):=\prod _{i\in I}a_i$. Let $V$ be an irreducible component of the (reduced) variety  $\Gamma$. Let $\tau :Z\rightarrow Y$ be $\tau =\psi \circ \pi _1=\pi \circ \varphi$.  Then the (strict) image of $V$ (see Section 4 for definition) under the rational map $\tau \times \tau :Z\times Z\rightarrow Y\times Y$ must lie in exactly one of the components $G_i$ of $\Gamma _g$. Then, we define $b(V):=a_i$. 

Now we define 
\begin{eqnarray*}
\Gamma _{u}:=\sum _Va(g)V/b(V),
\end{eqnarray*} 
where $V$ runs all over the irreducible components of $\Gamma$. Since $a(g)=\prod _{i\in I}a_i$ and $b(V)$ is one of the $a_i$'s, it follows that $a(g)/b(V)$ are integers for all $V$. From the properties of $\Gamma$, it follows immediately that $\Gamma _{u}$ is the graph of a dominant correspondence $u:Z\rightarrow Z$.  

\begin{lemma}
Let $G_i$, $a_i$, $a(g)$ and $a(V)$ be as in the previous paragraph. We have $\varphi \circ u=a(g)(f\circ \varphi ) $ and $\pi _1\circ u=a(g)(h\circ \pi _1)$. In other words, we have two semi-conjugacies $\varphi :(Z,u)\rightarrow (X,a(g)f)$ and $\pi _1:(Z,u)\rightarrow (W,a(g)h)$.  
\label{LemmaSemiconjugacy2}\end{lemma}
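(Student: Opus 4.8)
The plan is to prove both identities by checking them at a generic point of $Z$ and then invoking that two dominant correspondences $Z\to X$ (resp. $Z\to W$) whose values as $0$-cycles coincide on a dense Zariski open subset of $Z$ have the same graph. That $\varphi\circ u$, $a(g)(f\circ\varphi)$, $\pi_1\circ u$ and $a(g)(h\circ\pi_1)$ are dominant correspondences whose generic values are $0$-cycles follows from the constructions of this section, from $\Gamma$ being of pure dimension $\dim(Z)$ with generically finite first projection onto $Z$, and from $\Gamma_f$, $\Gamma_h$ being of pure dimensions $\dim(X)$ and $\dim(W)$. We argue in the simplified situation $X_1=X_2=X$, $Y_1=Y_2=Y$, $W_1=W_2=W$ in which $u$ was built; the general case involves the same bookkeeping. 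By Theorem~\ref{TheoremFiber} together with constructibility, I would first fix a dense open $\Omega\subseteq Z$ over which, writing $\tau=\psi\circ\pi_1=\pi\circ\varphi$: (a) the maps $\varphi$, $\tau$, $\Gamma\to Z$ and the structure maps $\Gamma_f\to X$, $\Gamma_h\to W$, $\Gamma_g\to Y$ have fibres of the expected dimension, and the generically finite ones among them have generically reduced fibres; (b) for every component $V$ of $\Gamma$ meeting the relevant locus the strict image of $V$ under $\tau\times\tau$ lies in exactly one of the $G_i$; (c) Lemma~\ref{LemmaSemiconjugacyConstantProperty} applies to both $\pi\colon(X,X,f)\to(Y,Y,g)$ and $\psi\colon(W,W,h)\to(Y,Y,g)$; and (d) the auxiliary open sets used in defining $\Gamma$ all sit over $\Omega$.

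Next, for $z=(x,w)\in\Omega$ with $y:=\pi(x)=\psi(w)$, I would compute $\varphi\circ u(z)=\varphi_*(u(z))$ componentwise. Fix a point $x'$ of the $0$-cycle $f(x)$, lying on the component $\Gamma_{f,j}$ of $\Gamma_f$, so its multiplicity $m_{x'}$ in $f(x)$ is the coefficient $c_j$ of $\Gamma_{f,j}$; set $z_0:=\pi(x')$. From $\pi\circ f=g\circ\pi$ we get $z_0\in g(y)$, hence $(y,z_0)\in G_i$ for a unique $i$, and Lemma~\ref{LemmaSemiconjugacyConstantProperty} applied to $\psi$ gives $\deg\bigl(h(w)\cap\psi^{-1}(z_0)\bigr)=a_i$. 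The points $z'$ of $u(z)$ with $\varphi(z')=x'$ are exactly the $z'=(x',w')$ with $(w,w')\in\Gamma_h$ and $\psi(w')=z_0$; if $(w,w')$ lies on $\Gamma_{h,k}$, then $z'$ lies on a component $V$ of $\Gamma$ sitting over $\Gamma_{f,j}\times\Gamma_{h,k}$, whose strict image under $\tau\times\tau$ contains $(\tau(z),\tau(z'))=(y,z_0)\in G_i$, so by (b) that image lies in $G_i$ and $b(V)=a_i$. Collecting factors --- the coefficient $c_jd_k$ of $V$ in the cycle $\Gamma$, local multiplicity $1$ over $z$ by (a), and the coefficient $a(g)/b(V)=a(g)/a_i$ of $V$ in $\Gamma_u$ --- the multiplicity of $z'$ in $u(z)$ equals $\tfrac{a(g)}{a_i}c_jd_k$; summing over the admissible $w'$ and using $\sum_{w'}d_k=\deg\bigl(h(w)\cap\psi^{-1}(z_0)\bigr)=a_i$, the multiplicity of $x'$ in $\varphi_*(u(z))$ is $\tfrac{a(g)}{a_i}c_j\,a_i=a(g)\,c_j=a(g)\,m_{x'}$. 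Summing over all $x'$ gives $\varphi\circ u(z)=a(g)\,f(x)=a(g)(f\circ\varphi)(z)$; interchanging the roles of $(X,f,\varphi)$ and $(W,h,\pi_1)$, which is legitimate because $a(g)$, the $G_i$, the $a_i$ and $b(V)$ are defined symmetrically in the two factors, the same computation yields $\pi_1\circ u(z)=a(g)(h\circ\pi_1)(z)$. As these identities hold on the dense open $\Omega$, they hold as identities of correspondences, which is the assertion.

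The step I expect to be the main obstacle is the multiplicity bookkeeping of the second paragraph: one must ensure that the cycle $\Gamma$ (and hence each $V$) carries the product coefficients $c_jd_k$ inherited from $\Gamma_f$ and $\Gamma_h$ rather than only their supports, that the fibre-product conditions $\pi(x)=\psi(w)$ and $\pi(x')=\psi(w')$ contribute intersection multiplicity $1$ over $\Omega$ so that no extra factors enter, and --- most delicately --- that the index $i$ attached to $V$ via $b(V)$ is the same as the one produced by $z_0=\pi(x')$, since it is exactly the cancellation of the two occurrences of $a_i$ that gives the clean factor $a(g)$. This last point, together with the availability of the open set in (b), is where the hypothesis that $g$ is presented by the genuine cycle $\sum_i a_iG_i$ with the $G_i$ pairwise distinct gets used in an essential way.
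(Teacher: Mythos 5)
Your proposal is correct and follows essentially the same route as the paper: evaluate $u$ at a generic point $z=(x,w)$ and apply Lemma~\ref{LemmaSemiconjugacyConstantProperty} so that the fibre cardinality $a_i$ cancels against the weight $a(g)/b(V)=a(g)/a_i$ assigned to the components of $\Gamma_u$. The paper organizes the sum by $t\in g(y)$ and cancels $a(t)$ against $a(g)/a(t)$ in one line, while you organize it by the points $x'\in f(x)$ (resp.\ $w'\in h(w)$) and track the coefficients $c_j,d_k$ from $\Gamma_f,\Gamma_h$ explicitly; this is a reordering of the same bookkeeping rather than a different argument.
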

\begin{proof}
Let $z=(x,w)\in Z\cap (U\times U')$, where $U\subset X$ and $U'\subset W$ are appropriate non-empty Zariski open sets. Define $y=\pi (x)=\psi (w)$. We have by definition
\begin{eqnarray*}
u(z)=\bigcup _{t\in g(y)}a(g)/a(t)(f(x)\cap \pi ^{-1}(t))\times (h(w)\cap \psi ^{-1}(t)),
\end{eqnarray*}
where $a(t)=a_i$ provided $(y,t)\in G_i$. By Lemma \ref{LemmaSemiconjugacyConstantProperty}, we have $\sharp (f(x)\cap \pi ^{-1}(t))=a(t)=\sharp (h(w)\cap \psi ^{-1}(t))$ for all $t\in g(y)$. Hence
\begin{eqnarray*}
\pi _1\circ u(z)&=&\bigcup _{t\in g(y)}a(g)/a(t)\times \sharp (f(x)\cap \pi ^{-1}(t))\times (h(w)\cap \psi ^{-1}(t))\\
&=&a(g)\bigcup _{t\in g(y)} (h(w)\cap \psi _1^{-1}(t))\\
&=&a(g)h(w)=a(g)h\pi _1(z).
\end{eqnarray*}
This shows that we have a semi-conjugacy $\pi _1:(Z,u)\rightarrow (W,a(g)h)$. Similarly, we have a semi-conjugacy $\varphi :(Z,u)\rightarrow (X,a(g)f)$. 
\end{proof} 

\section{Pullback by correspondences and strict intersections of varieties}

In this section we consider pullback, strict transforms by correspondences and strict intersection of algebraic varieties. We assume throughout this section that the field $K$ is algebraic closed of characteristic zero.

\subsection{Strict intersection of varieties} In this subsection, we prove an estimate for strict intersection of algebraic cycles on a product space $X\times Y$. We first define the notion of strict intersection of two varieties. Let $Z$ be an irreducible smooth projective variety and $V,W\subset Z$ be irreducible varieties. Let $U\subset Z$ be a non-empty Zariski open set on which the set theoretical intersection $V\cap W$ is of the correct dimension. Then $V|_U\wedge W|_U$ is well-defined as a variety, and we define $V\stackrel{U}{\wedge}W$ as the closure in $Z$ of $V|_U\wedge W|_U$. 

The main result of this subsection is the following.
\begin{lemma}
Let $X$ and $Y$ be irreducible smooth projective varieties over $K$. Let $\omega _X$ and $\omega _Y$ be generic hyperplane sections on $X$ and $Y$. There exists a constant $A>0$ such that for any pair of irreducible subvarieties $V,W\subset X\times Y$ and an open subset $U\subset X\times Y$ over which the wedge intersection $V|_U{\wedge}W|_U$ is well defined, then in $N^*(X\times Y)$
\begin{eqnarray*}
W\stackrel{U}{\wedge}V\leq (A\sum _{0\leq j\leq p(W), ~0\leq p(W)-j\leq k-l}\alpha _j(W)\omega _X^j.\omega _Y^{p(W)-j}).[V]
\end{eqnarray*}
 Here $p(W)$ is the codimension of $W$ and $\alpha _j(W)=W.\omega _X^{k-p(W)+j}.\omega _Y^{l-j}$.
\label{LemmaChowMovingForProduct}\end{lemma}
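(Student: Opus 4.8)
The plan is to reduce the statement about strict intersection in $X\times Y$ to the intersection-theoretic estimate of Lemma~\ref{LemmaDegreeOfIntersections} together with Roberts' version of Chow's moving lemma, and then control the numerical class of the resulting effective cycle by decomposing $N^*(X\times Y)$ via the Künneth-type generators coming from $\omega_X$ and $\omega_Y$. Concretely, I would first fix an embedding $X\times Y\subset\mathbb{P}^N$ compatible with $\omega_X,\omega_Y$ (for instance a Segre embedding composed with Veronese maps), so that the degree function and the norm $\|\cdot\|_1$ of Section~2 are available on $X\times Y$. The key observation is that $W\stackrel{U}{\wedge}V$, being the closure of the honest intersection $W|_U\wedge V|_U$ on the open set where it has the correct dimension, is dominated (as an effective cycle, hence in $N^*(X\times Y)$) by the full intersection product $W.W'$ computed after moving one of the two varieties by Chow's moving lemma; this is exactly the situation handled in the proof of Lemma~\ref{LemmaDegreeOfIntersections}. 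That lemma, together with the fact that the excess components live outside $U$, yields $W\stackrel{U}{\wedge}V\leq \gamma$ in $N^*(X\times Y)$ for some effective cycle $\gamma$ of codimension $p(W)+p(V)$ with $\deg(\gamma)\leq C\deg(W)\deg(V)$.

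Next I would pin down the numerical class. On a product $X\times Y$ of smooth projective varieties one does not have a full Künneth decomposition of $N^*$ in general, but one does have the following usable fact: the class of any effective cycle of codimension $r$ is, numerically, bounded above by a nonnegative combination of the monomials $\omega_X^{j}\cdot\omega_Y^{r-j}$ \emph{after intersecting with $V$} only if $V$ itself is sufficiently generic — so instead the cleaner route is to first write $W\stackrel{U}{\wedge}V = (W\stackrel{U}{\wedge}V)$ and compare it, inside $N^{p(W)}(X\times Y)\otimes [V]$, with $W$ itself. More precisely, I would argue that $W\stackrel{U}{\wedge}V\leq \beta\cdot[V]$ where $\beta\in N^{p(W)}(X\times Y)$ is effective with $\deg(\beta)$ comparable to $\deg(W)$, obtained by applying Roberts' moving lemma to $W$ alone (replacing $W$ by a rationally equivalent cycle $W'$ whose components meet $V$ properly on a dense open set containing the locus where $W\wedge V$ is already defined) and tracking degrees exactly as in Lemma~\ref{LemmaDegreeOfIntersections}. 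Then the task becomes: bound an effective class $\beta$ of codimension $p(W)$ and controlled degree by $A\sum_j \alpha_j(W)\,\omega_X^j\omega_Y^{p(W)-j}$ in $N^{p(W)}(X\times Y)$.

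For the last step I would use that $\omega_X$ and $\omega_Y$ generate a subring of $N^*(X\times Y)$ and that, pairing against the complementary monomials $\omega_X^{k-?}\omega_Y^{l-?}$, the "coordinates" of $\beta$ in this basis are precisely the intersection numbers $\beta.\omega_X^{a}\omega_Y^{b}$, which for $\beta=W$ are the $\alpha_j(W)$. The inequality $\beta\leq A\sum_j\alpha_j(W)\omega_X^j\omega_Y^{p(W)-j}$ in $N^{p(W)}$ then follows because the difference, paired with any product of the nef classes $\omega_X,\omega_Y$, is $\leq 0$ once $A$ is chosen large (depending only on $X,Y$ and the fixed embedding, via the finitely many structure constants of the intersection ring and the norm equivalence constants of Section~2.4) — and such products of $\omega_X,\omega_Y$ suffice to test effectivity-type inequalities against $[V]$ because $[V]$ is itself effective, so only the $\omega$-monomial part of $\beta$ survives the pairing with $[V].\omega_X^\bullet\omega_Y^\bullet$. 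The restriction $0\le j\le k-l$... wait, the stated range $0\leq j\leq p(W)$ with $0\leq p(W)-j\leq k-l$ reflects exactly which monomials $\omega_X^j\omega_Y^{p(W)-j}$ are nonzero on $X\times Y$ (one cannot exceed $\dim$ in either factor), so those are the only terms that can appear.

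\medskip

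\textbf{Main obstacle.} The delicate point is the passage from "effective cycle of bounded degree" to the \emph{explicit} monomial bound with the specific coefficients $\alpha_j(W)$: one must show that moving $W$ by Chow's lemma does not just control the total degree but in fact controls each Künneth-type intersection number $\beta.\omega_X^a\omega_Y^b$ by a constant times $\alpha_j(W)$, uniformly in $W$. This requires that the moving procedure of Roberts, when applied inside the product $X\times Y$ with the product polarization, produces cones and hyperplane-type cycles whose bidegrees are controlled separately in the $X$- and $Y$-directions, not merely in total degree — so the real work is redoing the degree bookkeeping of Lemma~\ref{LemmaDegreeOfIntersections} in the bigraded setting of $N^*(X\times Y)$, using that $\omega_X$ and $\omega_Y$ come from genuine linear systems on the two factors.
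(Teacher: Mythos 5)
Your approach — moving $W$ by Chow's lemma to a rationally equivalent cycle $W'$ that meets $V$ properly, and then trying to dominate the resulting effective class $\beta$ of codimension $p(W)$ by $A\sum_j\alpha_j(W)\,\omega_X^j\omega_Y^{p(W)-j}$ — is genuinely different from the paper's, and the final step is where it breaks down. The $\omega$-monomials span only a small subspace of $N^{p(W)}(X\times Y)$, so an arbitrary effective class $\beta$ of controlled degree is \emph{not} bounded above by a nonnegative combination of them; nef-monomial pairings being controlled does not yield effectivity of the difference, and the lemma as stated (and as used later, e.g.\ in Lemma~\ref{LemmaSemiconjugacyInvariant} and Lemma~\ref{LemmaWedgeIntersectionCompatibility}) really does use the inequality as a domination of cycle classes, not merely an inequality of some particular intersection numbers. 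You flag this yourself as ``the delicate point,'' but the proposed fix (bigraded bookkeeping of the cones produced by Roberts' lemma applied to $W$) would require the cone $C_L(W_{i-1})$ restricted to $X\times Y$ to have a class lying in the $\omega$-monomial cone, which there is no reason to expect — those cones live in the ambient $\mathbb{P}^N$ and their restriction to $X\times Y$ need not be an $\omega$-monomial at all.

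The paper sidesteps this entirely by moving a different object: the \emph{diagonal} $\Delta_Z\subset Z\times Z$ with $Z=X\times Y$, not $W$. It writes $V|_U\wedge W|_U$ as $\Delta_Z\wedge\pi_1^{-1}(V)|_U\wedge\pi_2^{-1}(W)|_U$, applies Roberts' Main Lemma to replace $\Delta_Z$ by a cone $C_L(\Delta_Z)$ in the ambient projective space, and uses that (i) $\Delta_Z\leq C_L(\Delta_Z)|_{Z\times Z}$ as effective cycles, so the domination survives, and (ii) the class of $C_L(\Delta_Z)$ is $A\,H^{k+l}$ for $H$ the ambient hyperplane, whose restriction to $Z\times Z$ is $A\,(\pi_1^{-1}\omega_X+\pi_1^{-1}\omega_Y+\pi_2^{-1}\omega_X+\pi_2^{-1}\omega_Y)^{k+l}$ — manifestly a nonnegative combination of $\omega$-monomials. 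Expanding this power, intersecting with $\pi_1^{-1}(V).\pi_2^{-1}(W)$, and pushing forward by $\pi_1$ produces exactly the coefficients $\alpha_j(W)=W.\omega_X^{k-p(W)+j}\omega_Y^{l-j}$ multiplying $\omega_X^j\omega_Y^{p(W)-j}.[V]$. So the information about $W$ enters only through these intersection numbers after pushforward, and one never needs to dominate $W$ itself by $\omega$-monomials. That is the key idea your proposal is missing.
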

 \begin{proof}
We will follow the ideas in Section 2 in \cite{dinh-nguyen-truong1} plus that in \cite{roberts}. Let us define $Z=X\times Y$. Let $X,Y$ be embedded into the same projective space $\mathbb{P}^N$. Let $\Delta _Z\subset Z\times Z$ be the diagonal, and let $\pi _1,\pi _2:Z\times Z\rightarrow Z$ be the projections. For simplicity, let us use  $\omega _X$ and $\omega _Y$ to denote also the pullbacks,  via the projections $Z=X\times Y\rightarrow X,Y$, of $\omega _X$ and $\omega _Y$ to $Z$. We embed $\mathbb{P}^{N}\times \mathbb{P}^N$ into some $\mathbb{P}^{N'}$, and hence have an embedding of $Z\times Z$ into $\mathbb{P}^{N'}$. 

By assumption, the two varieties $\Delta _Z$ and $\pi _1^{-1}(V)\wedge \pi _2^{-1}(W)$ has well-defined wedge intersection on the Zariski open set $U_Z=\pi _1^{-1}(U)\cap \pi _2^{-1}(U)\subset Z\times Z$. Hence in terms of the notation in \cite{roberts}, the excess of $\Delta _Z$ with relative to $\pi _1^{-1}(V)|_U\wedge \pi _2^{-1}(W)|_U$ is $0$.

Applying the Main Lemma in \cite{roberts} (note that in there the varieties $Y_1,\ldots ,Y_m$ are not required to be closed), where $V$ in that lemma is replaced by $Z\times Z$; $Z$ in that lemma is replaced by $\Delta _Z$; and the $Y_1,\ldots ,Y_m$ in that lemma are replaced by $\pi _1^{-1}(V)|_U\wedge \pi _2^{-1}(W)|_U$ (note that this variety is not closed in $Z\times Z$) in our situation, it follows that for a generic choice of a linear subspace $L\subset \mathbb{P}^{N'}$ of appropriate dimension, the cone $C_L(\Delta _Z)\subset \mathbb{P}^{N'}$ intersects properly $Z\times Z$, and $C_L(\Delta _Z)|_{Z\times Z}$ intersects properly $\pi _1^{-1}(V)|_U\wedge \pi _2^{-1}(W)|_U$. 

Since $\Delta _Z$ is a component of $C_L(\Delta _Z)\cap (Z\times Z)$, it follows that 
\begin{eqnarray*}
\Delta _Z\wedge \pi _1^{-1}(V)|_U\wedge \pi _2^{-1}(W)|_U\leq C_L(\Delta _Z)|_{Z\times Z} \wedge \pi _1^{-1}(V)|_U\wedge \pi _2^{-1}(W)|_U.
\end{eqnarray*}

There is a $\mathbb{P}^1$-family $g(t)$ of automorphisms of $\mathbb{P}^{N'}$ with $g(0)=Id$ such that $g(t)C_L(\Delta _Z)$ intersects $Z\times Z$ properly and $g(t)C_L(\Delta _Z)|_{Z\times Z}$ intersects $\pi _1^{-1}(V)\wedge \pi _2^{-1}(W)$ properly, for all $t$ in a non-empty Zariski open set $\Omega \subset \mathbb{P}^1$. Let $\gamma (t)$ be the closure of the cycles $g(t)C_L(\Delta _Z)|_{Z\times Z}\wedge \pi _1^{-1}(V)\wedge \pi _2^{-1}(W)$ for $t\in \Omega$ inside $Z\times Z\times \mathbb{P}^1$. Then 
\begin{eqnarray*}
\gamma (0)\geq   C_L(\Delta _Z)|_{Z\times Z} \wedge \pi _1^{-1}(V)|_U\wedge \pi _2^{-1}(W)|_U\geq \Delta _Z\wedge \pi _1^{-1}(V)|_U\wedge \pi _2^{-1}(W)|_U,
\end{eqnarray*}
and hence 
\begin{eqnarray*}
(\pi _1)_*(\gamma (0))\geq (\pi _1)_*(\pi _1^{-1}(V)|_U\wedge \pi _2^{-1}(W)|_U\wedge \Delta _Z)=V|_U\wedge W|_U.
\end{eqnarray*}
Since $\gamma (0)$ is a closed subvariety, it then follows that $(\pi _1)_*(\gamma (0))\geq V\stackrel{U}{\wedge }W$. 

Hence, to finish the proof it is enough to bound the class of $(\pi _1)_*(\gamma (0))$ in $N^*(Z)$. The latter is the same as the class of $\gamma (t)$ for $t\in \Omega$. Let $H\subset \mathbb{P}^{N'}$ be a hyperplane, then the class of $C_L(\Delta _Z)$ is $AH^{k+l}$ for some positive integer $A>0$. For $t\in \Omega$, since $g (t)$ is a linear map thus preserves degree, we have in $N^*(Z\times Z)$
\begin{eqnarray*}
[\gamma (t)]&=&[g(t)]C_L(\Delta _Z)]|_{Z\times Z}. [\pi _1^{-1}(V)]. [\pi _2^{-1}(W)]\\
&=&A[H^{k+l}]|_{Z\times Z}.[\pi _1^{-1}(V)]. [\pi _2^{-1}(W)]\\\
&=&A.(\pi _1^{-1}\omega _X+\pi _1^{-1}(\omega _Y)+\pi _2^{-1}(\omega _X)+\pi _2^{-1}\omega _Y)^{k+l}.[\pi _2^{-1}(W)].[\pi _1^{-1}(V)].
\end{eqnarray*}
The push forward by $\pi _1$ of the lass expression is bounded by
\begin{eqnarray*}
(A'\sum _{0\leq j\leq p(W), ~0\leq p(W)-j\leq k-l}\alpha _j(W)\omega _X^j.\omega _Y^{p(W)-j}).[V],
\end{eqnarray*}
where $A'$ depends only on $A$ and the binomial numbers $C(k+l,i)$ ($i=0,\ldots ,k+l$), taking into account that $\dim (X)=k$ and $\dim (Y)=l$. Hence the lemma is proven. 

 \end{proof}

\subsection{Pullback and strict transforms of algebraic cycles by correspondences}
Let $X$ and $Y$ be two irreducible projective manifolds and $f:X\rightarrow Y$ a dominant correspondence. Let $\Gamma _f$ be the graph of $f$, and write $\Gamma _f=\sum _{i\in I}a_iG_i$ where $I$ is a finite set, $G_i$ are distinct irreducible varieties and $a_i\in \mathbb{N}$.

Then we can define the pushforward operators $f_*:A_q(X)\rightarrow A_q(Y)$ and pullback operators $f^*:A^p(Y)\rightarrow A^p(X)$ (see Chapter 16 in \cite{fulton}). For example, there are two methods to define the pullback operators: 

Method 1: Let $\pi _X,\pi _Y:X\times Y\rightarrow X,Y$ be the two projections, and let $\Gamma _f$ be the graph of $f$. For $\alpha \in A^p(Y)$, we define $f^*(\alpha )\in A^p(X)$ by the following formula
\begin{eqnarray*}
f^*(\alpha )=(\pi _X)_*(\Gamma _f.\pi _Y^*(\alpha ))=\sum _{i\in I}a_iG_i^*(\alpha ). 
\end{eqnarray*}

Method 2: Let $\tau _i:\Gamma _i\rightarrow G _i$ be an alteration of $G _i$, that is $\Gamma _i$ is smooth and $\tau _i$ is generically finite of degree $d_i$. The results of de Jong assure the existence of alterations, and let $p_i,g_i:\Gamma _i\rightarrow X,Y$ be the induced morphisms. Then we define 
\begin{eqnarray*}
f^*(\alpha )=\sum _{i\in I}a_i(p_i)_*(g_i^*(\alpha ))/d_i. 
\end{eqnarray*}

For the convenience of the readers, we recall here an argument to show why these two methods give the same answer. It suffices to show for each $i$ that
\begin{eqnarray*}
G_i^*(\alpha )=(p_i)_*(g_i^*(\alpha ))/d_i.
\end{eqnarray*}
In fact, since $p_i=\pi _X\circ \tau _i$ and $g_i=\pi _Y\circ \tau _i$ we have
\begin{eqnarray*}
(p_i)_*g_i^*(\alpha )=(\pi _X)_*(\tau _i)_*\tau _i^*\pi _Y^*(\alpha )=d_i(\pi _X)_*(\pi _Y^*(\alpha ).G_i).
\end{eqnarray*}
Here we use that by the projection formula: $(\tau _i)_*\tau _i^*(\beta )=d_iG_i.\beta $ for any algebraic cycle $\beta$ on $X\times Y$.  

In defining dynamical degrees and proving some of their basic properties, we need to estimate the degrees of the pullback and of strict transforms by a correspondence of a cycle. We present these estimates in the remaining of this subsection. We fix an alteration $\Gamma _i$ of the component  $G _i$ of $\Gamma _f$, and let $p_i,g_i:\Gamma _i\rightarrow X,Y$ be the induced morphisms. By Theorem \ref{TheoremFiber}, the sets
\begin{eqnarray*} 
V_{l}=\{y\in Y:~\dim (g_i^{-1}(y))\geq l\}
\end{eqnarray*}
are algebraic varieties of $Y$. We denote by $\mathcal{C}_{g_i}=\cup _{l>\dim (X)-\dim (Y)}V_l$ the critical image of $g_i$. We have the first result considering the pullback of a subvariety of $Y$.

\begin{lemma}
Let $W$ be an irreducible subvariety of $Y$. If $W$ intersects properly every irreducible component of $V_l$ (for any $l>\dim (X)-\dim (Y)$), then $g_i^{-1}(W)$ is well-defined as a subvariety of $\Gamma _i$, and moreover the class $[g_i^{-1}(W)]$ represents the pullback $g_i^*(W)$ in $A^*(\Gamma _i)$.  
\label{LemmaGoodPullbackVariety}\end{lemma}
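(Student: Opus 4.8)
The plan is to reduce the statement about the correspondence $g_i$ to the classical fact that for a surjective morphism $h\colon A\to B$ of smooth varieties and a subvariety $W\subset B$ meeting properly every locus where the fibre dimension jumps, the scheme-theoretic preimage $h^{-1}(W)$ has pure codimension $\operatorname{codim}(W)$ in $A$ and its fundamental class represents $h^*[W]$. Here the relevant fibre-jumping loci are exactly the varieties $V_l=\{y\in Y:\dim(g_i^{-1}(y))\ge l\}$ for $l>\dim(X)-\dim(Y)$, which are algebraic by Theorem \ref{TheoremFiber}. So the bulk of the argument is a dimension count showing that the properness hypothesis on $W$ forces $g_i^{-1}(W)$ to have the right dimension along every component.

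\medskip

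First I would decompose $g_i^{-1}(W)$ according to the fibre dimension: write $g_i^{-1}(W)=\bigcup_l \big(g_i^{-1}(W)\cap g_i^{-1}(V_l\setminus V_{l+1})\big)$, where by convention $V_{\dim(X)-\dim(Y)}=Y$. On the open part where the fibres of $g_i$ have the generic dimension $r:=\dim(X)-\dim(Y)=\dim(\Gamma_i)-\dim(Y)$, the map $g_i$ is equidimensional, so the preimage of $W$ there has pure dimension $\dim(W)+r$, i.e.\ pure codimension $\operatorname{codim}(W)$ in $\Gamma_i$; this is the "main" component. For each larger $l$, the component of $g_i^{-1}(W)$ lying over $V_l$ has dimension at most $\dim(g_i^{-1}(V_l'))$ where $V_l'$ is a component of $V_l\cap W$; since $g_i$ restricted over $V_l$ has fibres of dimension roughly $l$ (more precisely, one bounds $\dim g_i^{-1}(V_l')\le \dim(V_l')+l$ after stratifying further so the fibre dimension is constant), and the properness hypothesis gives $\dim(V_l')=\dim(W)+\dim(V_l)-\dim(Y)$, one gets $\dim\le \dim(W)+\dim(V_l)-\dim(Y)+l$. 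The point is that $\dim(V_l)+l\le \dim(Y)+r$ — this is a standard bound on the fibre-dimension stratification of a morphism from an $(\dim(Y)+r)$-dimensional variety — so every such piece has dimension $\le \dim(W)+r$, hence does not exceed the dimension of the main component, and $g_i^{-1}(W)$ is equidimensional of the expected dimension.

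\medskip

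Once $g_i^{-1}(W)$ has the correct pure dimension, it is well-defined as a subvariety (with its natural scheme structure, or simply as a cycle with multiplicities), and the identification $[g_i^{-1}(W)]=g_i^*[W]$ in $A^*(\Gamma_i)$ follows from the compatibility of refined Gysin pullback with flat or proper intersection: one can realise $g_i^*[W]$ via the graph/diagonal construction $g_i^*[W]=(\mathrm{pr}_{\Gamma_i})_*\big(\Gamma_{g_i}\cdot \mathrm{pr}_Y^*[W]\big)$ on $\Gamma_i\times Y$, and the properness hypothesis guarantees the relevant intersection on $\Gamma_i\times Y$ is proper, so no excess-intersection correction terms appear and the product is just the fundamental cycle of $g_i^{-1}(W)$. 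This is the content of Fulton, Chapter 16 (or the discussion of Method 1 vs.\ Method 2 above), specialised to the case where the pullback is \emph{geometric}.

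\medskip

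\textbf{Main obstacle.} The delicate point is the dimension estimate for the components of $g_i^{-1}(W)$ sitting over the higher strata $V_l$: one must control simultaneously the dimension of $V_l\cap W$ (handled by the properness assumption) \emph{and} the dimension of the fibres of $g_i$ over those strata, and combine them via the inequality $\dim(V_l)+l\le\dim(Y)+r$. Making this last step rigorous requires refining the stratification $\{V_l\}$ so that the fibre dimension is genuinely locally constant and then applying Theorem \ref{TheoremFiber} (the fibre-dimension theorem) on each stratum — over an algebraically closed field this is available even in positive characteristic, which is why the lemma holds in arbitrary characteristic. Everything else is bookkeeping with cycle classes.
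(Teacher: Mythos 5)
Your proposal follows essentially the same route as the paper: reduce to showing $g_i^{-1}(W)$ has the expected dimension, stratify $Y$ by the fibre-dimension loci $V_l$, note that over the open part the preimage has dimension exactly $\dim(W)+\dim(X)-\dim(Y)$, and bound the contributions over the higher strata by combining the properness hypothesis on $W\cap V_l$ with a fibre-dimension estimate from Theorem~\ref{TheoremFiber}. The one place where the paper is slightly sharper is the final inequality. You invoke $\dim(V_l)+l\le\dim(\Gamma_i)=\dim(Y)+r$, which comes from $g_i^{-1}(V_l)\subset\Gamma_i$; this yields the non-strict bound $\dim\le\dim(W)+r$ on the bad pieces. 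The paper instead observes that for $l>r$ each component $V\subset V_l$ is a \emph{proper} subvariety of $Y$, and since $g_i$ is surjective the preimage $g_i^{-1}(V)$ is a proper subvariety of $\Gamma_i$, so in fact $\dim(V)+l\le\dim(g_i^{-1}(V))\le\dim(\Gamma_i)-1$, which gives $\dim(g_i^{-1}(W\cap\mathcal{C}_{g_i}))\le\dim(W)+r-1$, \emph{strictly} below the expected dimension. The extra $-1$ is what ensures $g_i^{-1}(W)$ has no irreducible components supported entirely over the critical locus $\mathcal{C}_{g_i}$: every component is generic over the flat locus, so identifying $[g_i^{-1}(W)]$ with the Gysin pullback $g_i^*(W)$ involves no subtleties about multiplicities along the bad strata. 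Your non-strict bound still establishes proper intersection and hence suffices for the stated conclusion, but if you tighten your fibre-dimension estimate by using the surjectivity of $g_i$ and the properness of $V_l$ as the paper does, you get the cleaner statement that the bad part is strictly lower dimensional.
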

\begin{proof} (See also Example 11.4.8 in \cite{fulton}.) By the intersection theory (see Section 8.2 in \cite{fulton} and Theorem 3.4 in \cite{friedlander-lawson}), it suffices to show that $g_i^{-1}(W)$ has the correct dimension $\dim (X)-\dim (Y)+\dim (W)$. First, if $y\in W-\mathcal{C}_{g_i}$ then $\dim (g_i^{-1}(y))=\dim (X)-\dim (Y)$ by definition of $\mathcal{C}_{g_i}$. Hence $\dim (g_i^{-1}(W-\mathcal{C}_{g_i})=\dim (W)+\dim (X)-\dim (Y)$. It remains to show that $g_i^{-1}(W\cap \mathcal{C}_{g_i})$ has dimension $\leq \dim (X)+\dim (W)-\dim (Y)-1$. Let $Z$ be an irreducible component of $W\cap \mathcal{C}_{g_i}$. We define $l=\inf \{\dim (g_i^{-1}(y)):~y\in Z\}$. Then $l>\dim (X)-\dim (Y)$ and for generic $y\in Z$ we have $\dim (g^{-1}(y))=l$ by Theorem \ref{TheoremFiber}. Let $V\subset V_l$ be an irreducible component containing $Z$.  By assumption $V\cap W$ has dimension $\dim (V)+\dim (W)-\dim (Y)$, hence $\dim (Z)\leq \dim (V)+\dim (W)-\dim (Y)$. We obtain $$\dim (g_i^{-1}(Z-V_{l+1}))=l+\dim (Z)\leq l+\dim (V)+\dim (W)-\dim (Y).$$ Since $g_i$ is surjective (because $f$ is dominant) and $V\not= Y$, it follows that $$\dim (X)-1\geq \dim (g_i^{-1}(V))\geq \dim (V)+l.$$ From these last two estimates we obtain 
\begin{eqnarray*}
\dim (g_i^{-1}(Z-V_{l+1}))&=&l+\dim (V)+\dim (W)-\dim (Y)\\
&\leq& \dim (X)-1+\dim (W)-\dim (Y).
\end{eqnarray*}
Since there are only a finite number of such components, it follows that $\dim (g_i^{-1}(W\cap \mathcal{C}_{g_i}))\leq \dim (W)+\dim (X)-\dim (Y)-1$, as claimed. 
\end{proof} 
 
We next estimate the degree of the pullback of a cycle. Fix an embedding $Y\subset \mathbb{P}^N_K$, and let $\iota :Y\subset \mathbb{P}^N_K$ the inclusion. Let $H\subset \mathbb{P}^N_K$ be a generic hyperplane and let $\omega _Y=H|_{Y}$. 
 
\begin{lemma} Let $f:X\rightarrow Y$ be a correspondence. 

a) Let $p=0,\ldots ,\dim (Y)$, and $Z\subset X$ a proper subvariety. Then there is a linear subspace $H^p\subset \mathbb{P}^N_K$ of codimension $p$ such that $H^p$ intersects $Y$ properly, $f^*(\iota ^*(H^p))$ is well-defined as a subvariety of $X$, and $f^*(\iota ^*(H^p))$ has no component on $Z$. In particular, for any non-negative integer $p$, the pullback $f^*(\omega _Y^p)\in A^{p}(X)$ is effective. 

b) Let $W$ be an irreducible of codimension $p$ in $Y$. Then in $A^p(X)$, we can represent $f^*(W)$ by $\beta _1-\beta _2$, where $\beta _1$ and $\beta _2$ are effective and $\beta _1,\beta _2\leq C\deg (W)f^*(\omega _Y^p)$ for some constant $C>0$ independent of the variety $W$, the manifold $X$ and the correspondence $f$.
\label{LemmaDegreeOfPullback}\end{lemma}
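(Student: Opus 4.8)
The plan is to pass to de Jong's alterations $\tau_i:\Gamma_i\to G_i$ of the components of $\Gamma_f$, where Lemma~\ref{LemmaGoodPullbackVariety} and the characteristic-free Theorem~\ref{TheoremFiber} are available, and to use Roberts' moving lemma (Section~\ref{SectionRoberts}) to keep all degrees under control. Throughout, write $f^*(\alpha)=\sum_{i\in I}(a_i/d_i)(p_i)_*(g_i^*(\alpha))$ as in Method~2 above, and let $\mathcal T$ be the finite family of all irreducible components of all the critical loci $V_l^{(i)}=\{y\in Y:\dim g_i^{-1}(y)\ge l\}$ (for $i\in I$, $l>\dim X-\dim Y$); this is a fixed family of proper closed subvarieties of $Y$, and by Lemma~\ref{LemmaGoodPullbackVariety}, whenever an irreducible subvariety of $Y$ meets every member of $\mathcal T$ properly, its set-theoretic preimage under each $g_i$ is a well-defined cycle representing $g_i^*$ of it. I will prove a) first and use its conclusion in b).

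For part a) I would take $H^p=H_1\cap\cdots\cap H_p$ with $H_1,\dots,H_p$ generic hyperplanes of $\mathbb P^N_K$. A purely dimension-theoretic Bertini iteration (cutting $Y$ and each member of $\mathcal T$ by one generic hyperplane at a time, needing no resolution and no hypothesis on the characteristic) shows that for generic $H^p$ the section $\iota^*(H^p)$ meets $Y$ and every member of $\mathcal T$ properly; applying Lemma~\ref{LemmaGoodPullbackVariety} componentwise, each $g_i^{-1}(\iota^*(H^p))$ is then a well-defined cycle of pure codimension $p$ representing $g_i^*(\iota^*(H^p))$, so $f^*(\iota^*(H^p))$ is represented by the effective cycle $\sum_i(a_i/d_i)(p_i)_*\big[g_i^{-1}(\iota^*(H^p))\big]$; since every codimension-$p$ linear section of $Y$ is rationally equivalent to $\omega_Y^p$, this also yields the effectivity of $f^*(\omega_Y^p)$. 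For the "no component on $Z$" clause: each $S_i:=p_i^{-1}(Z)$ is a proper closed subvariety of $\Gamma_i$ (as $p_i$ is surjective and $Z\subsetneq X$), so $\dim S_i\le\dim X-1$; a component of $f^*(\iota^*(H^p))$ can lie in $Z$ only if some component of some $g_i^{-1}(\iota^*(H^p))$ is contained in $S_i$, and since $g_i^{-1}(\iota^*(H^p))$ has pure dimension $\dim X-p$ it suffices to force $\dim\big(S_i\cap g_i^{-1}(\iota^*(H^p))\big)<\dim X-p$. This I would get from the incidence variety $\mathcal I_i=\{(s,H)\in S_i\times(\mathbb P^N_K)^\vee:\ g_i(s)\in H\}$: its first projection has all fibres $\cong\mathbb P^{N-1}$, so $\dim\mathcal I_i=\dim S_i+N-1$, whence the fibre of $\mathcal I_i\to(\mathbb P^N_K)^\vee$ over a generic $H$ has dimension $\le\dim S_i-1$; iterating this for $p$ generic hyperplanes gives $\dim\big(S_i\cap g_i^{-1}(\iota^*(H^p))\big)\le\dim S_i-p\le\dim X-1-p$ for generic $H^p$. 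Intersecting this dense open condition with the earlier ones concludes a).

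For part b) I would first run Roberts' moving lemma on $Y\subset\mathbb P^N_K$ with $\mathcal T$ as the test family, producing $W\sim W'=\gamma_1-\gamma_2$ in $A^p(Y)$ with $\gamma_1,\gamma_2$ effective, every component of $\gamma_j$ meeting every member of $\mathcal T$ properly, and $\deg(\gamma_1),\deg(\gamma_2)\le C_1\deg(W)$, where $C_1:=(\dim Y+1)(\deg Y)^{\dim Y}$ depends only on $Y\subset\mathbb P^N_K$ — the bound following from $\deg(C_L(\cdot))=\deg(\cdot)$ and from the construction using at most $\dim Y$ cone steps, and in particular not involving $X$ or $f$. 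By Lemma~\ref{LemmaGoodPullbackVariety} the cycles $\beta_j:=f^*(\gamma_j)=\sum_i(a_i/d_i)(p_i)_*\big[g_i^{-1}(\gamma_j)\big]$ are effective and represent $f^*(W')=f^*(W)$, so $f^*(W)=\beta_1-\beta_2$. The decisive point is to bound $\beta_j$ \emph{before} pulling back: for each component $T$ of $\gamma_j$, choose via Roberts' Main Lemma (using that $e(T)=0$ with respect to $\mathcal T$) a generic cone $C_{L_T}(T)\subset\mathbb P^N_K$ so that $C_{L_T}(T)|_Y$ meets $Y$ and every member of $\mathcal T$ properly; then $C_{L_T}(T)|_Y=T+(\text{effective})$, while $[C_{L_T}(T)|_Y]=\deg(T)\,\omega_Y^p$ in $A^p(Y)$ because $C_{L_T}(T)$ has codimension $p$ and degree $\deg(T)$ in $\mathbb P^N_K$ and $A^p(\mathbb P^N_K)=\mathbb Z[H^p]$. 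Taking the corresponding combination with the multiplicities of $\gamma_j$ produces an effective cycle $\widetilde\gamma_j\ge\gamma_j$ with $\widetilde\gamma_j\sim\deg(\gamma_j)\,\omega_Y^p$ and every component meeting $\mathcal T$ properly; since $f^*$ preserves effectivity on such cycles (Lemma~\ref{LemmaGoodPullbackVariety} once more) and $f^*(\widetilde\gamma_j)\sim\deg(\gamma_j)f^*(\omega_Y^p)$, writing $\widetilde\gamma_j=\gamma_j+(\widetilde\gamma_j-\gamma_j)$ gives in $A^p(X)$
\begin{eqnarray*}
C_1\deg(W)\,f^*(\omega_Y^p)-\beta_j\ =\ \big(C_1\deg(W)-\deg(\gamma_j)\big)f^*(\omega_Y^p)\ +\ f^*\!\big(\widetilde\gamma_j-\gamma_j\big),
\end{eqnarray*}
both summands being represented by effective cycles (the first because $f^*(\omega_Y^p)$ is effective by a) and $\deg(\gamma_j)\le C_1\deg(W)$); hence $\beta_j\le C_1\deg(W)\,f^*(\omega_Y^p)$ with $C:=C_1$ independent of $W$, $X$ and $f$, as required.

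The step I expect to be the main obstacle is precisely this uniform bound in b): the naive degree estimate for $\beta_j$ produces a constant depending on $f$ (through $\deg G_i$ and $d_i$), whereas the statement demands independence of $f$. The way around is to compare with $f^*(\omega_Y^p)$ rather than with the degree on $X$, which forces one to dominate $\gamma_j$ by an effective cycle rationally equivalent to a multiple of $\omega_Y^p$ \emph{on $Y$, before applying $f^*$}, all the while keeping every cycle in general position with respect to the critical loci $\mathcal T$ so that $f^*$ remains effectivity-preserving. Carrying this general-position requirement simultaneously through the Roberts move and the cone modification is the bookkeeping heart of the argument; once it is in place, the rest is formal.
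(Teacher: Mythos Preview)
Your proof is correct and follows essentially the same route as the paper's: pass to alterations of the components of $\Gamma_f$, use Lemma~\ref{LemmaGoodPullbackVariety} to guarantee that pullbacks of cycles in good position with respect to the critical loci $V_l$ are effective, and invoke Roberts' moving lemma on $Y\subset\mathbb P^N_K$ to keep all degree bounds uniform in $W$, $X$ and $f$. The only organizational difference is that in b) the paper exploits the explicit Roberts decomposition $W\sim\iota^*(\alpha_1)-\iota^*(\alpha_2)\pm W_e$ directly (the cone terms $\iota^*(\alpha_i)$ are already $\sim\deg(\alpha_i)\omega_Y^p$, and $W_e$ is then dominated by one further cone $\iota^*(\alpha)$), whereas you first extract $\gamma_1-\gamma_2$ and then in a second pass dominate each $\gamma_j$ by a cone combination $\widetilde\gamma_j\sim\deg(\gamma_j)\omega_Y^p$; these are equivalent bookkeeping choices, and your incidence-variety dimension count in a) is in fact slightly more precise than the paper's phrasing.
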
 
\begin{proof} We may assume that $\Gamma _f$ is irreducible. Let $\Gamma \rightarrow \Gamma _f$ be an alteration, with the induced morphisms $p:\Gamma \rightarrow X$ and $g:\Gamma \rightarrow Y$.  Since by definition $f^*(W)=p_*g^*(W)$ and since $p_*$ preserves effective classes, it suffices to prove the lemma for the morphism $g$. We let the varieties $V_l$ as those defined before Lemma \ref{LemmaGoodPullbackVariety}.  

a) Let $H^p\subset \mathbb{P}^N_K$ be a generic codimension $p$ linear subspace. Then in $A^p(Y)$, $\omega _Y^p$ is represented by $\iota ^*(H^p)$. We can choose such an $H^p$ so that $H^p$ intersects properly $Y$, $g(Z)$ and all irreducible components of $V_l$ and $g(Z)\cap V_l$ for all $l>\dim (X)-\dim (Y)$. By Lemma \ref{LemmaGoodPullbackVariety}, the pullback $g^*(\iota ^*(H^p))=g^{-1}(\iota ^*(H^p))$ is well-defined as a subvariety of $\Gamma$. Moreover, the dimension of $g^{-1}(\iota ^*(H^p))\cap Z$ is less than the dimension of $g^{-1}(\iota ^*(H^p))$. In particular, $g^*(\iota ^*(H^p))$ is effective and has no component on $Z$. 

b) By Roberts' version of Chow's moving lemma, $W$ is rationally equivalent to $\iota^*(\alpha _1)-\iota^*(\alpha _2)\pm W_e$, where $\alpha _1,\alpha _2\subset \mathbb{P}^N_K$  and $W_e\subset Y$ are subvarieties of codimension $p$, and they intersect properly $Y$ and all irreducible components of $V_l$ for all $l>\dim (X)-\dim (Y)$. Moreover, $\deg (\alpha _1),\deg (\alpha _2),\deg (W_e)\leq C\deg (W)$, for some $C>0$ independent of $W$. Moreover, we can find a codimension $p$ variety $\alpha\subset \mathbb{P}^N_K$ so that $\alpha$ intersects properly with $Y$ and all $V_l$, $\iota ^*(\alpha ) -W_e$ is effective, and $\deg (\alpha )\leq C\deg (W_e)$. Note that in $A^p(Y)$ we have $\iota ^*(\alpha _1)\sim \deg (\alpha _1)\omega _Y^p$, $\iota ^*(\alpha _2)\sim \deg (\alpha _2)\omega _Y^p$ and $\iota ^*(\alpha )\sim \deg (\alpha )\omega _Y^p$. Note also that $0\leq g^*(W_e)\leq g^*(\iota ^*(\alpha ))$. Therefore, in $A^p(\Gamma )$
\begin{eqnarray*}
g^*(W)\sim \deg (\alpha _1)g^*(\omega _Y^p)-\deg (\alpha _2)g^*(\omega _Y^p)\pm g^*(W_e),    
\end{eqnarray*}
where each of the three terms on the RHS is effective and $\leq C\deg (W)g^*(\omega _Y ^p)$ for some $C>0$ independent of $W$, $X$ and $f$.
\end{proof}

Finally, we estimate the degree of a strict transform of a cycle by a correspondence. Again, it is sufficient to consider the case $\Gamma _f$ is irreducible. Let $\Gamma \rightarrow \Gamma _f$ be an alteration with the induced morphisms $p:\Gamma \rightarrow X$ and $g:\Gamma \rightarrow Y$.  Define $$g_0=g|_{\Gamma -g^{-1}(\mathcal{C}_g)}: \Gamma -g^{-1}(\mathcal{C}_g)\rightarrow Y-\mathcal{C}_g.$$ Then $g_0$ is a proper morphism, and for any $y\in Y-\mathcal{C}_g$, $g_0^{-1}(y)$ has the correct dimension $\dim (X)-\dim (Y)$. Let $W\subset Y$ be a codimension $p$ subvariety. The inverse image $g_0^{-1}(W)=g^{-1}(W)\cap (\Gamma -g^{-1}(\mathcal{C}_g))\subset \Gamma -g^{-1}(\mathcal{C}_g)$ is a closed subvariety of codimension $p$ of $\Gamma -g^{-1}(\mathcal{C}_g)$, hence its closure $cl(g_0^{-1}(W))\subset \Gamma$ is a subvariety of codimension $p$, and we define $f^{o}(W)=p_*cl(g_0^{-1}(W))$. Note that a strict transform depends on the choice of the alteration $\Gamma$ of the graph $\Gamma _f$. (We can also define a strict transform more intrinsically using the graph $\Gamma _f$ directly, as in \cite{dinh-sibony3}.)

\begin{lemma}
Let $W\subset Y$ be a codimension $p$ subvariety. Then $f^o(W)$ is an effective cycle, and in $A^p(X)$
\begin{eqnarray*} 
f^{o}(W)\leq C\deg (W)f^*(\omega _Y^p),
\end{eqnarray*}
where $C>0$ is a constant independent of the the variety $W$, the manifold $X$  and the correspondence $f$. 
\label{LemmaDegreeOfStrictTransform}\end{lemma}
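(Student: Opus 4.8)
The plan is to reduce, exactly as in the proof of Lemma \ref{LemmaDegreeOfPullback}, to the case where $\Gamma_f$ is irreducible, so that $f^o$ and $f^*$ are computed from the morphisms $p:\Gamma\rightarrow X$ and $g:\Gamma\rightarrow Y$ attached to an alteration $\Gamma\rightarrow\Gamma_f$; since $p_*$ preserves effective classes and is applied to both sides with the same normalisation, it suffices to prove that $cl(g_0^{-1}(W))$ is an effective cycle of pure codimension $p$ and that in $A^p(\Gamma)$
\[
cl(g_0^{-1}(W))\leq C\deg(W)\,g^*(\omega_Y^p).
\]
Effectivity is immediate because $cl(g_0^{-1}(W))$ is a genuine subvariety of $\Gamma$. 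If $W\subset\mathcal{C}_g$ then $g_0^{-1}(W)=\emptyset$ and $f^o(W)=0$, so from now on we assume $W\not\subset\mathcal{C}_g$.

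As recalled just before the statement, $g_0:\Gamma-g^{-1}(\mathcal{C}_g)\rightarrow Y-\mathcal{C}_g$ is a proper morphism all of whose fibres have dimension exactly $\dim(X)-\dim(Y)$, i.e. the dimension of its source minus that of its target. Since source and target are both smooth, miracle flatness shows that $g_0$ is flat; hence $g_0^{-1}(W)$, being the flat pullback of the codimension $p$ subvariety $W\cap(Y-\mathcal{C}_g)$, is pure of codimension $p$, and therefore so is its closure $cl(g_0^{-1}(W))$ in $\Gamma$.

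Next, by the same cone construction employed inside the proof of Lemma \ref{LemmaDegreeOfPullback}(b), we choose a codimension $p$ subvariety $\alpha\subset\mathbb{P}^N_K$ such that $\alpha$ intersects $Y$ and every irreducible component of every $V_l$ (for all $l>\dim(X)-\dim(Y)$) properly, the cycle $\iota^*(\alpha)-W$ on $Y$ is effective, and $\deg(\alpha)\leq C\deg(W)$ with $C$ depending only on the embedding $Y\subset\mathbb{P}^N_K$. Applying Lemma \ref{LemmaGoodPullbackVariety} to each irreducible component of $\iota^*(\alpha)$, we obtain that $g^{-1}(\iota^*(\alpha))$ is well defined, pure of codimension $p$, and that its associated cycle represents $g^*(\iota^*(\alpha))\sim\deg(\alpha)\,g^*(\omega_Y^p)$ in $A^p(\Gamma)$.

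It remains to compare supports. Since $\iota^*(\alpha)-W$ is effective, $\mathrm{Supp}(W)\subset\mathrm{Supp}(\iota^*(\alpha))$, hence $\mathrm{Supp}(cl(g_0^{-1}(W)))\subset\mathrm{Supp}(g^{-1}(W))\subset\mathrm{Supp}(g^{-1}(\iota^*(\alpha)))$, and the last set is a finite union of codimension $p$ subvarieties. As $cl(g_0^{-1}(W))$ is pure of codimension $p$, each of its irreducible components therefore coincides with one of the components of $g^{-1}(\iota^*(\alpha))$, and so occurs in the cycle $g^*(\iota^*(\alpha))$ with coefficient at least $1$; as $cl(g_0^{-1}(W))$ carries coefficient one on each of its components, we conclude $cl(g_0^{-1}(W))\leq g^*(\iota^*(\alpha))$ in $A^p(\Gamma)$. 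Pushing forward by $p$ and using $g^*(\iota^*(\alpha))\sim\deg(\alpha)\,g^*(\omega_Y^p)$ together with $\deg(\alpha)\leq C\deg(W)$ gives $f^o(W)\leq C\deg(W)\,f^*(\omega_Y^p)$. I expect the genuinely new point, and the thing to be most careful about, to be the purity of $cl(g_0^{-1}(W))$ in the second step, which is precisely where the dimension count of Theorem \ref{TheoremFiber} (rather than generic smoothness) is used; the remaining degree bookkeeping simply reprises that of Lemma \ref{LemmaDegreeOfPullback}.
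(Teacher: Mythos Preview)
There is a genuine gap in the third paragraph: the subvariety $\alpha\subset\mathbb{P}^N_K$ you ask for does not exist in general. You require simultaneously that $\iota^*(\alpha)-W$ be effective (so in particular $W\subset\alpha$) and that $\alpha$ intersect every $V_l$ properly. But suppose $W$ has positive excess with respect to some $V_l$, i.e.\ $\dim(W\cap V_l)>\dim V_l-p$ while $W\not\subset V_l$. Then $W\cap V_l\subset\alpha\cap V_l$ forces $\dim(\alpha\cap V_l)>\dim V_l-p$ as well, so $\alpha$ cannot meet $V_l$ properly. The cone construction in the proof of Lemma~\ref{LemmaDegreeOfPullback}(b) that you cite is applied there to $W_e$, the cycle produced \emph{after} Roberts' moving procedure, which already has excess zero relative to the $V_l$; for the original $W$ no single dominating $\alpha$ with these properties is available.

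This is exactly why the paper's proof is more elaborate. It first runs the full Roberts decomposition
\[
W=\sum_{i=1}^e(-1)^{i-1}\iota^*(C_{L_i}(W_{i-1}))+(-1)^eW_e,
\]
and then treats the two kinds of terms differently. For $W_e$ (excess zero) one can indeed bound $g^o(W_e)\le g^*(W_e)\le C\deg(W)\,g^*(\omega_Y^p)$ directly, essentially by your argument. For the cone terms $\iota^*(S)$ with $S=C_{L_i}(W_{i-1})$, which need \emph{not} meet the $V_l$ properly, the paper builds a $\mathbb{P}^1$-family $\tau(t)S$ of translates by automorphisms of $\mathbb{P}^N$, so that $\tau(t)S$ meets $Y$ and all $V_l$ properly for generic $t$, and takes the closure in $\Gamma\times\mathbb{P}^1$ of the family of pullbacks; specialising at $t=0$ then yields an effective cycle dominating $g^o(\iota^*(S))$ and rationally equivalent to $\deg(S)\,g^*(\omega_Y^p)$. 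Your ``support comparison'' idea would, in effect, only handle the $W_e$ piece; the deformation argument is the missing ingredient for the cone pieces.
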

\begin{proof} Recall again that here we can assume that $\Gamma _f$ is irreducible. That $f^0(W)$ is an effective cycle follows from the definition. It suffices to prove the lemma for the morphism $g:\Gamma \rightarrow Y$. By Roberts' version of Chow's moving lemma, we can decompose $W$ as follows
\begin{eqnarray*}
W=\sum _{i=1}^e(-1)^{i-1}\iota ^*(C_{L_i}(W_{i-1}))+(-1)^eW_e,
\end{eqnarray*} 
where the variety $W_e$ intersects properly all irreducible components of $V_l$ for all $l>\dim (X)-\dim (Y)$, and $C_{i}(W_{i-1})\subset \mathbb{P}^N_K$ are subvarieties of codimension $p$ intersecting $Y$ properly (but may not intersect properly the irreducible components of $V_l$). Moreover, we have the following bound on the degrees
\begin{equation}
\deg (W_e),\deg (C_{L_i}(W_{i-1}))\leq C\deg (W),
\label{Equation0}\end{equation}
for all $i$, where $C>0$ is independent of $W$, $X$ and $f$. 

By the definition of $g^0$ we have
\begin{equation}
g^o(W)= \sum _{i=1}^e(-1)^{i-1}g^o(\iota ^*(C_{L_i}(W_{i-1})))+(-1)^eg^o(W_e).
\label{Equation1}\end{equation}

Note that $e\leq \dim (Y)$. We now estimate each term on the RHS of (\ref{Equation1}). Let $S\subset \mathbb{P}^N_K$ be a subvariety of codimension $p$ intersecting $Y$ properly (but may not intersecting properly the components of $V_l$). We first show that for any such $S$
\begin{equation}
g^o(\iota ^*(S))\leq \deg (S)g^*(\omega _Y^p),
\label{Equation2}\end{equation}
in $A^p(\Gamma )$. 

We can find a curve of automorphisms $\tau (t)\in Aut(\mathbb{P}^N_K)$ for $t\in \mathbb{P}^1_K$ such that for a dense Zariski open dense subset  $U\subset \mathbb{P}^1$, $\tau (t)S$ intersects properly $Y$ and all the irreducible components of $V_l$ (for $l>\dim (X)-\dim (Y)$)  for all $t\in U$. Let $\mathcal{S}\subset Y\times \mathbb{P}^1$ be the corresponding variety, hence for $t\in U\subset \mathbb{P}^1$, $\mathcal{S}_t=\iota ^*(\tau (t)S)\subset Y$. Since $S$ intersects $Y$ properly, we have $\mathcal{S}_0=\iota ^*(S)$. By the choice of $\mathcal{S}$, for any $t\in U$ the pullback $g^*(\mathcal{S}_t)$ is well-defined as a subvariety of $\Gamma$. 

We consider the induced map $G:\Gamma \times \mathbb{P}^1\rightarrow Y\times \mathbb{P}^1$ given by the formula 
\begin{eqnarray*}
G(z,t)=(g(z),t).
\end{eqnarray*}
We define by $G_0$ the restriction map $G_0:\Gamma \times U\rightarrow Y\times U$. By the choice of the variety $\mathcal{S}$, the inverse image $$G_0^{-1}(\mathcal{S})=G^{-1}(\mathcal{S})\cap  (\Gamma \times U)\subset \Gamma \times U$$ is a closed subvariety of codimension $p$, hence its closure $G^o(\mathcal{S})\subset \Gamma \times \mathbb{P}^1$ is a subvariety of codimension $p$. Moreover, for all $t\in U$ we have
\begin{eqnarray*}
G^o(\mathcal{S})_t=g^*(\mathcal{S}_t).
\end{eqnarray*}
Since the map $g_0:\Gamma -g^{-1}(\mathcal{C}_g)\rightarrow Y-\mathcal{C}_g$ has all fibers of the correct dimension $\dim (X)-\dim (Y)$, it follows that $$G^o(\mathcal{S})_0\cap (\Gamma -g^{-1}(\mathcal{C}_g))=g_0^{-1}(\iota ^*(S)).$$ 
In fact, let $G_1$ be the restriction of $G$ to $(\Gamma -\mathcal{C}_g)\times \mathbb{P}^1$. Then $$G_1^{-1}(\mathcal{S} )=G^{-1}(\mathcal{S})\cap [(\Gamma -\mathcal{C}_g)\times \mathbb{P}^1]\subset (\Gamma -\mathcal{C}_g)\times \mathbb{P}^1$$
is a closed subvariety of codimension $p$. Hence its closure, denoted by $\widetilde{G}^o(\mathcal{S})\subset \Gamma \times \mathbb{P}^1$ is a subvariety of codimension $p$. For $t\in U$, we have $\widetilde{G}^o(\mathcal{S})_t=g^*(\mathcal{S}_t)=G^o(\mathcal{S}_t)$, because on the one hand $\widetilde{G}^o(\mathcal{S})_t\subset G^{-1}(\mathcal{S})_t=g^*(\mathcal{S}_t)$, and on the other hand $g^*(\mathcal{S}_t)$ has no component on $g^{-1}(\mathcal{C}_g)$ and $\widetilde{G}^o(\mathcal{S})_t\cap (\Gamma -g^{-1}(\mathcal{C}_g))=g_0^{-1}(\mathcal{S}_t)$. Therefore $\widetilde{G}^o(\mathcal{S})=G^o(\mathcal{S})$ as varieties on $\Gamma \times \mathbb{P}^1$. In particular  
\begin{eqnarray*}
G^o(\mathcal{S})_0\cap (\Gamma -g^{-1}(\mathcal{C}_g))=\widetilde{G}^o(\mathcal{S})_0\cap (\Gamma -g^{-1}(\mathcal{C}_g))=g_0^{-1}(\iota ^*(S)),
\end{eqnarray*}
as claimed.

Hence 
\begin{eqnarray*}
g^o(\iota ^*(S))\leq G^o(\mathcal{S})_0
\end{eqnarray*}
as varieties on $\Gamma$. Since $G^o(\mathcal{S})_0$ is rationally equivalent to $G^o(\mathcal{S})_t$ for any $t$ in $U$, it follows that for all such $t$ we have
\begin{eqnarray*}
g^o(\iota ^*(S))\leq G^o(\mathcal{S})_t=g^*(\mathcal{S}_t)=\deg (S)g^*(\omega _Y^p),
\end{eqnarray*}
in $A^p(\Gamma )$. Hence (\ref{Equation2}) is proven. 

Now we continue the proof of the lemma. By (\ref{Equation2}) and the bound on degrees (\ref{Equation0}), for all $i=1,\ldots ,e$
\begin{eqnarray*}
g^o(\iota ^*(C_{L_i}(W_{i-1})))\leq C\deg (W)g^*(\omega _Y^p),
\end{eqnarray*}
in $A^p(\Gamma )$ where $C>0$ is independent of $W$, $X$ and $f$. 

It remains to estimate $g^o(W_e)$. By the choice of $W_e$, the pullback $g^*(W_e)$ is well-defined as a subvariety of $\Gamma$, hence by b) of Lemma \ref{LemmaDegreeOfPullback} and the bound on degrees (\ref{Equation0}) we have
\begin{eqnarray*} 
g^o(W_e)\leq g^*(W_e)\leq C\deg (W)g^*(\omega _Y^p),
\end{eqnarray*}
in $A^p(\Gamma )$, where $C>0$ is independent of $W$, $X$ and $f$. Thus the proof of the lemma is completed. 
\end{proof} 

\begin{lemma}
Let $X,Y,Z$ be irreducible projective manifolds, and let $f:Y\rightarrow X,~g:Z\rightarrow Y$ be dominant correspondences. We fix an embedding $Y\subset \mathbb{P}^M_K$ and let $\omega _Y$ be the pullback to $Y$ of a generic hyperplane in $\mathbb{P}^M_K$. Then in $A^p(Z)$ 
\begin{eqnarray*} 
(f\circ g)^*(\omega _X^p) \leq C\deg (f^*(\omega _X^p))g^*(\omega _Y^p),
\end{eqnarray*}
where $C>0$ is independent of $f$ and $g$.
\label{LemmaDegreeOfCompositionMaps}\end{lemma}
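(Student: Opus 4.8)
The plan is to reduce to the two estimates already established in this section: Lemma \ref{LemmaDegreeOfPullback} (which controls the pullback of a cycle by a single correspondence in terms of $\omega^p$) and Lemma \ref{LemmaDegreeOfStrictTransform} (which controls a strict transform by a correspondence). The key point is that the pullback $(f\circ g)^*$ of a correspondence composition can be compared with the composition $g^*\circ f^o$ of a strict transform by $f$ followed by a pullback by $g$: composition of correspondences is defined geometrically via generic fibers (Theorem \ref{TheoremFiber}), and strict transforms capture exactly the "honest" part of such a fibered product. So first I would write $\Gamma_{f\circ g}$ in terms of $\Gamma_f$ and $\Gamma_g$, reduce (as in the previous lemmas) to the case where both $\Gamma_f$ and $\Gamma_g$ are irreducible, and fix alterations of these graphs with induced morphisms to $X$, $Y$, $Z$.

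Next I would take $S\subset \mathbb{P}^N_K$ a generic codimension-$p$ linear subspace, so that $\iota_X^*(S)$ represents $\omega_X^p$ in $A^p(X)$ and moreover (using genericity as in part a) of Lemma \ref{LemmaDegreeOfPullback}) $f^*(\iota_X^*(S))$ is effective, well-defined as a subvariety, and equals the strict transform $f^o(\iota_X^*(S))$ — i.e. for a generic representative the strict transform and the full pullback coincide. Write $f^*(\omega_X^p)=f^o(\iota_X^*(S))=\sum_k [W_k]$ as an effective cycle on $Y$, with $\sum_k \deg(W_k)=\deg(f^*(\omega_X^p))$. The claim $(f\circ g)^*(\omega_X^p)\le g^*(f^*(\omega_X^p))$ (up to a universal constant) would then follow by applying Lemma \ref{LemmaDegreeOfStrictTransform} (or part b) of Lemma \ref{LemmaDegreeOfPullback}) to $g$ and each component $W_k$: in $A^p(Z)$ one gets $g^*(W_k)$ bounded by $C\deg(W_k)\,g^*(\omega_Y^p)$, and summing over $k$ gives $\sum_k C\deg(W_k)\,g^*(\omega_Y^p)=C\deg(f^*(\omega_X^p))\,g^*(\omega_Y^p)$, as desired. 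The universal constant $C$ is the one furnished by Lemma \ref{LemmaDegreeOfStrictTransform}, hence independent of $f$ and $g$.

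The main obstacle, and the step that needs care, is the functoriality identity: showing that $(f\circ g)^*(\omega_X^p)$ is dominated (as an effective class, in $A^p(Z)$) by $g^*\bigl(f^o(\iota_X^*(S))\bigr)$ for a suitably generic $S$. This requires unwinding the definition of composition of correspondences on the level of the fibered product of the alterations, checking that the generic fiber of $\Gamma_{f\circ g}\to X$ (equivalently, the preimage $(f\circ g)^{-1}(\text{generic point})$) is, up to components lying over the critical images, the composite preimage $g^{-1}(f^{-1}(\cdot))$, and that taking closures does not introduce components of too-high dimension — exactly the type of dimension count carried out in Lemma \ref{LemmaGoodPullbackVariety}. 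One has to be a little careful that the generic $S$ can be chosen to simultaneously avoid the critical images $\mathcal{C}$ of the relevant morphisms on both alterations (of $\Gamma_f$ and of $\Gamma_g$), but since there are only finitely many such subvarieties and each is proper, a generic choice works. Once the comparison $(f\circ g)^*(\omega_X^p)\le C\,g^*\bigl(f^*(\omega_X^p)\bigr)$ is in place, the degree bound is immediate from the previous lemmas by linearity in the effective decomposition of $f^*(\omega_X^p)$.
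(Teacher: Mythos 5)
Your plan is essentially the paper's argument: choose a generic codimension-$p$ linear section so that the full pullback $(f\circ g)^*(\iota^*(H^p))$ coincides with the strict transform $(f\circ g)^o(\iota^*(H^p))$, factor the latter as $g^o f^o(\iota^*(H^p))$ by checking that $(f\circ g)_0=f_0\circ g_0$ over the complements of suitable ``bad'' sets $V_X,V_Y,V_Z$ (this is the functoriality step you correctly flag as the main point), bound this by $g^o f^*(\iota^*(H^p))$ since $f^o\leq f^*$ on the well-defined representative, and then apply Lemma~\ref{LemmaDegreeOfStrictTransform} to each component of the effective cycle $f^*(\iota^*(H^p))$.

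One small imprecision worth fixing: the intermediate inequality has to go through the strict transform $g^o$, not the full pullback $g^*$, and the final bound must come from Lemma~\ref{LemmaDegreeOfStrictTransform}. You offer part b) of Lemma~\ref{LemmaDegreeOfPullback} as an alternative, but that lemma only represents $g^*(W_k)$ as a \emph{difference} $\beta_1-\beta_2$ of effective cycles with bounded degrees; it does not yield the one-sided effective domination $\leq C\deg(W_k)\,g^*(\omega_Y^p)$ needed to close the chain of effective inequalities in $A^p(Z)$. Only the strict-transform estimate furnishes that. With this correction, the argument is the same as the paper's.
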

\begin{proof} We can find proper subvarieties $V_X\subset X,V_Y\subset Y$ and $V_Z\subset Z$ so that the correspondences $f_0:Y-V_Y\rightarrow X-V_X$ and $g_0:Z-V_Z\rightarrow Y-V_Y$ have all fibers of the correct dimensions, by using Theorem \ref{TheoremFiber}. Then it follows that the composition $Z-V_Z\rightarrow X-V_X$ has all fibers of the correct dimension.   

Define by $(f\circ g)_0$ the restriction of $f\circ g$ to $Z-V_Z$. Then $(f\circ g)_0=f_0\circ g_0:Z-V_Z\rightarrow X-V_X$ and has all fibers of the correct dimension. We define the strict transforms $f^0$, $g^0$ and $(f\circ g)^0$ using these restriction correspondences $f_0,g_0$ and $(f\circ g)_0$. 

By Lemma \ref{LemmaDegreeOfPullback} a), we can find a linear subspace $H^p\subset \mathbb{P}^N_K$ so that $H^p$ intersects $X$ properly, $(f\circ g)^*(\iota ^*(H^p))$ is well-defined as a variety and has no component on $V_Z$, and $f^*(\iota ^*(H^p))$ is well-defined as a variety. Then
\begin{eqnarray*}
(f\circ g)^*(\iota ^*(H^p))=(f\circ g)^o(\iota ^*(H^p))
\end{eqnarray*} 
is the closure of $$(f\circ g)_0^{-1}(\iota ^*(H^p))=(f_0\circ g_0)^{-1}(\iota ^*(H^p))=(g_0)^{-1}f_0^{-1}(\iota ^*(H^p)).$$
Therefore $$(f\circ g)^*(\iota ^*(H^p))=g^of^o(\iota ^*(H^p))\leq g^of^*(\iota ^*(H^p)).$$
as subvarieties of $Z$. By Lemma \ref{LemmaDegreeOfStrictTransform}, we have the desired result. 
\end{proof}

\section{Proofs of the main results}
  
 In this section we prove Theorems \ref{TheoremRelativeDynamicalDegrees}, \ref{TheoremWeakerProductFormula}, \ref{TheoremFurtherProperty} and \ref{TheoremTriangleInequality}. The proof of Theorem  \ref{TheoremRelativeDynamicalDegrees} occurs in two first subsections, and we will proceed as follows.
 
 - First, we will prove part 1) of Theorem \ref{TheoremRelativeDynamicalDegrees}, that is to define relative dynamical degrees in the regular case. 
 
 - Then we will use de Jong's alterations and part 1) to define relative dynamical degrees for the general case, and prove other parts of Theorem \ref{TheoremRelativeDynamicalDegrees}.  
 
  From now on, we assume that both $X$ and $Y$ are embedded in the same projective space $\mathbb{P}^N$. Let $H_N\subset \mathbb{P}^N$ be a generic hyperplane. We denote by $\omega _X$ the restriction of $H_N$ to $X$ and $\omega _Y$ the restriction of $H_N$ to $Y$. 
  
 \subsection{The case $X$ is smooth and $\pi$ is a regular morphism}
Because $Y$ has dimension $l$, a generic complete intersection of hyperplane sections $\omega _Y^l$ is a union of $\deg (Y)$ points. The preimages $\pi ^{-1}(\omega _Y^l)\subset X$ will play an important role in the definition of relative dynamical degrees. The advantage of the choice $\pi ^{-1}(\omega _Y^l)$ lies in that these are rationally equivalent and that $\pi ^{-1}(\omega _Y)=(\pi \circ \iota _Y)^*(H_N)$ is a nef class, since $\iota _Y\circ \pi :X\rightarrow \mathbb{P}^N$ is a regular morphism. 

\begin{lemma}
Let $\omega _{X,1}^p$ and $\omega _{X,2}^p$ be two generic complete intersections of hyperplane sections of $X$, and $\omega _{Y,1}^l$ and $\omega _{Y,2}^l$ two generic complete intersections of  hyperplane sections of $Y$. Then the intersections $f^*(\omega _{X,1}^p)\cap \pi ^{-1}(\omega _{Y,1}^l)$ and $f^*(\omega _{X,2}^p)\cap \pi ^{-1}(\omega _{Y,2}^l)$ are well-defined and have the same class in $N^*(X)$.
\label{LemmaRDD2}\end{lemma}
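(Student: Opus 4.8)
The plan is to prove two assertions. \emph{Well-definedness}: for generic choices of the hyperplane sections, the set-theoretic intersection $f^{*}(\omega_{X,i}^{p})\cap\pi^{-1}(\omega_{Y,i}^{l})$ has pure codimension $p+l$ in $X$ (or is empty), so that it carries an effective cycle; and \emph{Independence}: this cycle has a class in $N^{p+l}(X)$ which does not change when the choices are replaced. For the second assertion the main device is that $X$ is smooth, so a proper intersection of cycles represents the product of their classes in the Chow ring $A^{*}(X)$ (see \cite{fulton}, \S 8); it will therefore be enough to check that the classes $[f^{*}(\omega_{X,i}^{p})]\in A^{p}(X)$ and $[\pi^{-1}(\omega_{Y,i}^{l})]\in A^{l}(X)$ depend only on the fixed classes $\omega_{X}^{p}$ and $\omega_{Y}^{l}$, and then to invoke that rational equivalence implies numerical equivalence.

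For well-definedness I would first choose the $p$ hyperplanes defining $\omega_{X,i}^{p}$ generically: by Lemma \ref{LemmaDegreeOfPullback}(a), $f^{*}(\omega_{X,i}^{p})$ is then a well-defined effective cycle of pure codimension $p$. Now $\pi^{-1}(\omega_{Y,i}^{l})$ is the union of the preimages under $\pi$ of the $\deg(Y)$ points cut out by $\omega_{Y,i}^{l}$; choosing the $l$ hyperplanes generically, each of these points is a generic point of $Y$, hence lies in the dense Zariski open set furnished by Theorem \ref{TheoremFiber} applied to $\pi$ and to the restrictions of $\pi$ to the (finitely many) irreducible components of $f^{*}(\omega_{X,i}^{p})$. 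Consequently each such preimage is a fibre of $\pi$ of dimension exactly $k-l$, meeting every component of $f^{*}(\omega_{X,i}^{p})$ in dimension $k-l-p$ or not at all. This is exactly the statement that $f^{*}(\omega_{X,i}^{p})\cap\pi^{-1}(\omega_{Y,i}^{l})$ is proper; in particular the intersection cycle is defined.

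It remains to identify the class. Any two generic complete intersections of hyperplane sections of $X$ are rationally equivalent, so $[\omega_{X,1}^{p}]=\omega_{X}^{p}=[\omega_{X,2}^{p}]$ in $A^{p}(X)$; since the pullback $f^{*}$ is well defined on Chow groups (\cite{fulton}, Ch.~16), $[f^{*}(\omega_{X,1}^{p})]=f^{*}(\omega_{X}^{p})=[f^{*}(\omega_{X,2}^{p})]$. Writing $\sigma=\iota_{Y}\circ\pi:X\to\mathbb{P}^{N}$, which is a regular morphism to a smooth variety, the Gysin pullback $\sigma^{*}:A^{*}(\mathbb{P}^{N})\to A^{*}(X)$ is a ring homomorphism, and when $\pi^{-1}(\omega_{Y,i}^{l})$ has the expected dimension it is the honest preimage $\sigma^{-1}(L_{i})$ of a generic codimension-$l$ linear subspace $L_{i}$ and so represents $\sigma^{*}(h^{l})$, with $h$ the hyperplane class of $\mathbb{P}^{N}$; this is independent of $i$. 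Hence, $X$ being smooth, the proper intersection cycle of the Lemma represents the single class $f^{*}(\omega_{X}^{p})\cdot\sigma^{*}(h^{l})\in A^{p+l}(X)$, and a fortiori it has a well-defined class in $N^{p+l}(X)$. I expect the well-definedness step to be the only genuinely delicate point, since in arbitrary characteristic one cannot appeal to generic smoothness and must extract the dimension count purely from Theorem \ref{TheoremFiber}; here it is mild because $\pi^{-1}(\omega_{Y}^{l})$ is a union of generic fibres of $\pi$, but for the companion statements about $f^{n}$ and about non-regular $\pi$ (treated later via alterations of graphs, as in Section~4) the same bookkeeping is where the real effort lies. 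The identification of the class, being a formal computation in the Chow ring of the smooth variety $X$, is then straightforward.
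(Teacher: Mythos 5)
Your proof is correct, but your argument for well-definedness takes a genuinely different route from the paper's. The paper establishes a general auxiliary fact: for any proper subvariety $Z\subset X$, a generic hyperplane section $\omega_Y$ of $Y$ has $\pi^{-1}(\omega_Y)$ meeting $Z$ properly (the key point being that $\pi^{-1}(\omega_Y)$ is a divisor and, since $\pi$ is regular, $\bigcap_{\omega_Y}\pi^{-1}(\omega_Y)=\pi^{-1}(\bigcap\omega_Y)=\emptyset$); it then iterates this step $l$ times, starting from $Z_0=f^*(\omega_{X}^p)$ and cutting down by one codimension at a time with generic members of $\pi^{-1}(|\omega_Y|)$. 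You instead go directly to the finite fibre picture: $\pi^{-1}(\omega_Y^l)$ is a union of $\deg(Y)$ fibres of $\pi$, you apply Theorem \ref{TheoremFiber} to $\pi$ and to its restrictions to the components $V$ of $f^*(\omega_X^p)$, and you invoke a moving-the-linear-system (codimension-count) argument to place all $\deg(Y)$ cut points in the resulting dense open sets. Both arguments are characteristic-free; the paper's has the advantage of not having to distinguish between components $V$ on which $\pi|_V$ is dominant and those on which it is not (that case distinction is implicit in your ``in dimension $k-l-p$ or not at all''), while yours is shorter because it avoids the $l$-fold induction. For the class identification you are slightly more explicit than the paper (identifying $[\pi^{-1}(\omega_Y^l)]$ as $\sigma^*(h^l)$ for $\sigma=\iota_Y\circ\pi$), but the underlying mechanism — proper intersection in the Chow ring of the smooth $X$ plus rational equivalence $\Rightarrow$ numerical equivalence — is the same.
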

\begin{proof}
Let $\omega _{X,1}^p$ be a fixed complete intersection of hyperplanes sections of $X$ so that $f^{-1}(\omega _{X,1}^p)$ has the correct dimension. This is possible by Lemma \ref{LemmaDegreeOfPullback}.

Let $Z\subset X$ be any proper subvariety of dimension $<\dim (X)$. We show that for a generic choice of $\omega _{Y}$ then $\pi ^{-1}(\omega _{Y})$ intersects $Z$ properly. Otherwise, since $\pi ^{-1}(\omega _{Y})$ is a divisor on $X$ and since $Z$ has only a finite number of irreducible components, there will be an irreducible component  of $Z$ belonging to all $\pi ^{-1}(\omega _{Y})$ - when we vary $\omega _Y$ - which is impossible. (Here the assumption that $\pi$ is regular is used, in that if $A,B$ are two subsets of $Y$ then $\pi ^{-1}(A)\cap \pi ^{-1}(B)=\pi ^{-1}(A\cap B)$.) Using this, starting from $Z_0=f^{-1}(\omega _{X,1}^p)$, there is a generic $\omega _{Y,1}$ such that the wedge intersection $Z_1=Z_0\wedge \pi ^{-1}(\omega _{Y,1})$ is well-defined. Keeping doing this $l$ times we obtain that for a generic choice of hyperplane sections $\omega _{Y,1}^l$, the wedge intersection $f^{-1}(\omega _{X,1}^p)\wedge  \pi ^{-1}(\omega _{Y,1}^l)$ is well-defined. The class in $N^*(X)$ is then the product of the two classes $[f^{-1}(\omega _{X,1}^p)]$ and $[\pi ^{-1}(\omega _{Y,1}^l)]$, and hence is independent of the choice of $\omega _{X,1}^p$ and $\omega _{Y,1}^l$.
\end{proof}

By Lemma \ref{LemmaRDD2}, the number 
\begin{eqnarray*}
\deg _p(f|\pi ):=\deg (f^*(\omega _{X}^p)\cap \pi ^{-1}(\omega _{Y}^l))
\end{eqnarray*}
is well-defined and is independent of the choices of generic complete intersections of hyperplane sections $\omega _X^p$ and $\omega _Y^l$. 

Our next objective is to show that the following limit exists
\begin{eqnarray*}
\lambda _p(f|\pi )=\lim _{n\rightarrow\infty}\deg _p(f^n|\pi )^{1/n}.
\end{eqnarray*}
These limits are the relative dynamical degrees we seek to define. To prove the existence of these limits, we will make use of another quantity associated to $f$. 

{\bf Notation.} Let $\mathcal{B}_f\subset X$ be a proper subvariety, called a "bad" set for $f$, outside it the map $f$ has good properties needed for our purpose. For example, $\mathcal{B}_f$ may contain the critical and indeterminate set of $f$, but it may also contains the critical and indeterminate sets of some iterates of $f$ and the pre images of some "bad" set $\mathcal{B}_g\subset Y$. These "bad" sets are chosen specifically for each of the results proven below, so that the behaviour of certain iterates of $f$ is nice outside $\mathcal{B}_f$. For example, in the proof of Theorem \ref{TheoremRDD4} below, we choose $\mathcal{B}_f$ in such a way that the strict transforms $(f^m)_0$, $(f^n)_0$ and $f^{m+n}_0$ are well-defined and coincide outside $\mathcal{B}_f$. Since it will be quite cumbersome to list $\mathcal{B}_f$ in each case, we will not do so in the sequel.   

Let $V$ be a subvariety of $X$, each of whose irreducible components does not belong to the "bad" set $\mathcal{B}_f$.  Then from the proof of Lemma \ref{LemmaRDD2}, the strict pushforward $f_0(\omega _X^j\cap (V-\mathcal{B}_f))$ is well-defined for a generic complete intersection of  hyperplane sections $\omega _X^j$. Moreover, the class in $N^*(X)$ of the closure of $f_0(\omega _X^j\cap (V-\mathcal{B}_f))$ is independent of the choice of $\mathcal{B}_f$ and such a generic complete intersection of hyperplane sections $\omega _X^j$. We choose, in particular $V=\pi ^{-1}(\omega _Y^l)$ where $\omega _Y^l$ is a generic complete intersection of hyperplane sections of $Y$. Then the class in $N^*(X)$  of the closure of $f_0(\omega _X^j\cap (\pi ^{-1}(\omega _Y^l)-\mathcal{B}_f))$  is independent of the choice of such $\mathcal{B}_f$, $\omega _X^j$ and $\omega _Y^l$. For $j=k-l-p$, we denote the degree of the closure of $f_0(\omega _X^j\cap (\pi ^{-1}(\omega _Y^l)-\mathcal{B}_f))$ by $\deg '_p(f|\pi )$.
\begin{lemma}
For all $0\leq p\leq k-l$, we have: $\deg _p(f|\pi )=\deg '_p(f|\pi )$.
\label{LemmaRDD3}\end{lemma}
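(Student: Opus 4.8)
The plan is to express both $\deg_p(f|\pi)$ and $\deg'_p(f|\pi)$ as intersection numbers of the \emph{same} algebraic cycle on the graph $\Gamma_f$ (or an alteration of it), using the projection formula. Recall that $\deg_p(f|\pi) = \deg\bigl(f^*(\omega_X^p)\cap\pi^{-1}(\omega_Y^l)\bigr)$ involves a pullback by $f$, while $\deg'_p(f|\pi)$ is the degree of the closure of $f_0\bigl(\omega_X^{k-l-p}\cap(\pi^{-1}(\omega_Y^l)-\mathcal{B}_f)\bigr)$, which involves a strict pushforward. So at heart this is the statement that ``pulling back $p$ generic hyperplanes and then intersecting with $\pi^{-1}(\omega_Y^l)$'' has the same total degree as ``restricting to $\pi^{-1}(\omega_Y^l)$, slicing by $k-l-p$ generic hyperplanes, and pushing the resulting $0$-cycle forward''. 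Both should compute the intersection number of $\Gamma_f$ against $\pi_X^*(\omega_X^{k-l-p}\cdot \pi^{-1}(\omega_Y^l))\cdot \pi_Y^*(\omega_X^p)$ inside $X\times Y$ (abusing $\pi$ for the fibration $X\to Y$ and $\pi_X,\pi_Y$ for the two product projections).

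First I would reduce to the case $\Gamma_f$ irreducible, as in the proofs of Lemmas \ref{LemmaDegreeOfPullback} and \ref{LemmaDegreeOfStrictTransform}, and fix an alteration $\tau:\Gamma\to\Gamma_f$ of degree $d$ with induced morphisms $p_1:\Gamma\to X$ (the ``source'' projection, which we are pulling back along) and $p_2:\Gamma\to X$ (the ``target'' projection). By definition $f^*(\omega_X^p)=\tfrac1d (p_1)_*\,p_2^*(\omega_X^p)$, and by Lemma \ref{LemmaDegreeOfPullback}(a) applied with the subvariety $Z$ containing all the bad loci, for a generic choice of the $p$ hyperplane sections the pullback $p_2^*(\omega_X^p)$ is represented by the honest inverse image $p_2^{-1}(\omega_X^p)$, a subvariety of $\Gamma$ meeting $p_1^{-1}(\pi^{-1}(\omega_Y^l))$ and the various $V_l$'s properly. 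Then
\begin{eqnarray*}
\deg_p(f|\pi)
&=&\deg\bigl((p_1)_*\bigl[p_2^{-1}(\omega_X^p)\bigr]/d \ \cap\ \pi^{-1}(\omega_Y^l)\bigr)\\
&=&\tfrac1d\deg\bigl(p_2^{-1}(\omega_X^p)\ \cap\ p_1^{-1}(\pi^{-1}(\omega_Y^l))\ \cap\ p_1^{-1}(\omega_X^{k-l-p})\bigr),
\end{eqnarray*}
where I used the projection formula on $p_1$ and then the fact that a generic complete intersection $\omega_X^{k-l-p}$ on $X$ pulls back under $p_1$ to a generic complete intersection on $\Gamma$ against which all the previous cycles intersect properly (Bertini/Theorem \ref{TheoremFiber} plus the moving-lemma bound, exactly as in Lemma \ref{LemmaRDD2}). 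The right-hand side is a symmetric, purely numerical intersection number on the smooth variety $\Gamma$.

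For $\deg'_p(f|\pi)$ I would unwind the definition of the strict pushforward $f_0$: choosing $\mathcal{B}_f$ to contain the critical locus of $p_2$ and the indeterminacy of everything in sight, the closure of $f_0\bigl(\omega_X^{k-l-p}\cap(\pi^{-1}(\omega_Y^l)-\mathcal{B}_f)\bigr)$ is $(p_2)_*$ of the closure of $p_2^{-1}$-fiber-restriction of $p_1^{-1}(\pi^{-1}(\omega_Y^l)\cap\omega_X^{k-l-p})$; since that latter cycle is a $0$-cycle sitting over generic points away from the bad set, its strict and total transforms agree, and $p_2$ restricted there is generically $d$-to-$1$. Hence
\begin{eqnarray*}
\deg'_p(f|\pi)
&=&\tfrac1d\deg\bigl((p_2)_*\bigl[p_1^{-1}(\pi^{-1}(\omega_Y^l))\cap p_1^{-1}(\omega_X^{k-l-p})\bigr]\ \cap\ \omega_X^p\bigr)\\
&=&\tfrac1d\deg\bigl(p_1^{-1}(\pi^{-1}(\omega_Y^l))\cap p_1^{-1}(\omega_X^{k-l-p})\cap p_2^{-1}(\omega_X^p)\bigr),
\end{eqnarray*}
again by the projection formula on $p_2$ and genericity of $\omega_X^p$. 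The two displayed right-hand sides are literally the same number on $\Gamma$, which gives $\deg_p(f|\pi)=\deg'_p(f|\pi)$.

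The main obstacle is the bookkeeping of genericity and properness: one must justify that a \emph{single} choice of generic $\omega_X^p$, $\omega_X^{k-l-p}$, $\omega_Y^l$ makes simultaneously valid (i) the identification of $p_2^*(\omega_X^p)$ with an effective inverse-image cycle as in Lemma \ref{LemmaDegreeOfPullback}(a), (ii) the coincidence of strict and total transform in the definition of $\deg'_p$ (so that the bad set $\mathcal{B}_f$ is genuinely avoided by the $0$-cycle), and (iii) that all the relevant intersections on $\Gamma$ are proper so that cycle classes may be computed as honest scheme-theoretic intersections and the projection formula in the form $\,(p_i)_*(\alpha)\cdot\beta = (p_i)_*(\alpha\cdot p_i^*\beta)\,$ applies on the nose. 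All three are routine consequences of Theorem \ref{TheoremFiber}, Bertini-type genericity over an algebraically closed field, and the already-established Lemmas \ref{LemmaGoodPullbackVariety}--\ref{LemmaDegreeOfStrictTransform}, but assembling them into one compatible generic choice — rather than choosing the slices sequentially and invoking rational equivalence at the end, as in Lemma \ref{LemmaRDD2} — is where the care is needed. Everything else is the projection formula and the symmetry of the intersection pairing on the smooth projective variety $\Gamma$.
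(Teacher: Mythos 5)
Your argument is correct and is essentially the proof in the paper: express both quantities as the degree of a single $0$-cycle supported on the graph of $f$, pushed forward through the source and target projections respectively, and invoke the invariance of the degree of a $0$-cycle under pushforward. The only cosmetic difference is that you pass to an alteration $\Gamma\to\Gamma_f$, whereas the paper works directly with the $0$-cycle $\alpha=\pi_1^*\bigl(\omega_X^{k-l-p}.(\pi^{-1}(\omega_Y^l)-\mathcal{B}_f)\bigr).\pi_2^*(\omega_X^p).\Gamma_f$ on the smooth ambient $X\times X$ (legitimate here since $X$ is smooth in this subsection), noting $(\pi_1)_*\alpha$ computes $\deg_p(f|\pi)$ and $(\pi_2)_*\alpha$ computes $\deg'_p(f|\pi)$; this avoids the alteration, the reduction to irreducible $\Gamma_f$, and the $1/d$ bookkeeping, but the underlying idea is identical.
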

\begin{proof}
We choose a generic complete intersection of hyperplane sections $\omega _X^{k-l-p}$ of $X$. By definition
\begin{eqnarray*}
\deg _p(f|\pi )=\omega _X^{k-l-p}.Z,
\end{eqnarray*}
where $Z$ is the closure of $f^*(\omega _X^p).(\pi ^{-1}(\omega _Y^l)-\mathcal{B}_f)$. Since $\omega _X^{k-l-p}$ is a generic complete intersection of hyperplane sections, we have $\omega _X^{k-l-p}\cap Z'=\emptyset$, where $Z'=Z-f^*(\omega _X^p).(\pi ^{-1}(\omega _Y^l)-\mathcal{B}_f)$. Moreover, we can choose so that $\omega _X^{k-l-p}.f^*(\omega _X^p).(\pi ^{-1}(\omega _Y^l)-\mathcal{B}_f)$, which is a $0$-cycle, does not intersect any priori given subvariety of $X$. 

Let $\Gamma _f$ be the graph  of $f$, and let $\pi _1,\pi _2$ be the two projections $X\times X\rightarrow X$. We consider the following $0$-cycle on $X\times X$: $\pi _1^*(\omega _X^{k-l-p}.(\pi ^{-1}(\omega _Y^l)-\mathcal{B}_f)). \pi _2^*(\omega _X^p).\Gamma _f$. Denote by $\alpha$ this zero cycle. The degree of $(\pi _1)_*(\alpha )$ is $\deg _p(f|\pi )$ and the degree of $(\pi _2)_*(\alpha )$ is $\deg '_p(f|\pi )$. Then, the lemma follows from the fact that the degrees of the push forwards by $\pi _1$ and $\pi _2$ of a $0$-cycle on $X\times X$ are the same. 
 \end{proof}

Now we define relative dynamical degrees. By the use of Theorem \ref{TheoremFiber}, we note that the Sard-Bertini's type theorem for characteristic zero is not needed in the algebraic case (compare with the proof of Proposition 3.3. in \cite{dinh-nguyen}).
\begin{theorem}
For any $0\leq p\leq k-l$, the following limit exists
\begin{eqnarray*}
\lambda _p(f|\pi ):=\lim _{n\rightarrow\infty}\deg '_p(f^n|\pi )^{1/n}=\lim _{n\rightarrow \infty}\deg _p(f^n|\pi )^{1/n}.
\end{eqnarray*}
We call $\lambda _p(f|\pi )$ the $p$-th relative dynamical degree of $f$ with respect to $\pi$.
\label{TheoremRDD4}\end{theorem}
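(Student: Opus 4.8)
The plan is to establish the existence of the limit by a submultiplicativity argument applied to the sequence $\deg'_p(f^n|\pi)$, just as is classical for dynamical degrees, but being careful that the ``bad set'' complications do not interfere. First I would observe that, by Lemma \ref{LemmaRDD3}, it suffices to work with either $\deg_p$ or $\deg'_p$, and the strict-transform description $\deg'_p$ is the convenient one for composing iterates. The key claim to prove is a quasi-submultiplicativity estimate of the form
\begin{eqnarray*}
\deg'_p(f^{m+n}|\pi) \leq C\,\deg'_p(f^{m}|\pi)\,\deg'_p(f^{n}|\pi),
\end{eqnarray*}
for a constant $C>0$ independent of $m,n$. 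Granting this, the sequence $a_n = \log(C\,\deg'_p(f^n|\pi))$ is subadditive, so by Fekete's lemma $\lim_n a_n/n$ exists (possibly $-\infty$, but here bounded below since all the quantities are positive integers $\geq$ something, actually $\geq 1$ because the cycles are effective and nonzero), and hence $\lim_n \deg'_p(f^n|\pi)^{1/n}$ exists and equals $\inf_n (C\deg'_p(f^n|\pi))^{1/n}$.

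To prove the quasi-submultiplicativity, I would choose the bad set $\mathcal{B}_f$ large enough (as indicated in the Notation paragraph) so that the strict transforms $(f^m)_0$, $(f^n)_0$ and $(f^{m+n})_0$ are all well-defined and $(f^{m+n})_0 = (f^m)_0 \circ (f^n)_0$ outside $\mathcal{B}_f$; this uses Theorem \ref{TheoremFiber} to arrange that the relevant generic fibers have the correct dimension, so no Sard--Bertini theorem is needed. Then $\deg'_p(f^{m+n}|\pi)$ is the degree of the closure of $(f^m)_0\bigl((f^n)_0(\omega_X^{k-l-p}\cap(\pi^{-1}(\omega_Y^l)\setminus\mathcal{B}_f))\bigr)$. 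The inner strict pushforward has class controlled, via Lemma \ref{LemmaDegreeOfStrictTransform} (applied to $f^n$, or rather to an alteration of its graph) together with Lemma \ref{LemmaDegreeOfPullback}, in terms of $\deg'_p(f^n|\pi)$ times a fixed power of $\omega_X$; applying the outer strict pushforward by $(f^m)_0$ and again invoking Lemma \ref{LemmaDegreeOfStrictTransform} for $f^m$ multiplies the degree by at most $C'\deg'_p(f^m|\pi)$. The numerical-class computations here reduce to the intersection estimate of Lemma \ref{LemmaDegreeOfIntersections} and the degree bounds of Section 4, which are precisely engineered so that the constants depend only on $\dim X$, $\dim Y$ and $\deg X$, not on the iterate.

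The main obstacle I anticipate is the bookkeeping of strict transforms versus honest pullbacks: for a general correspondence $f$ the identity $f^*(\alpha.\beta) = f^*(\alpha)\stackrel{o}{\wedge} f^*(\beta)$ fails, so one cannot simply manipulate cohomology classes, and one must genuinely track the geometric cycles outside the bad set and only pass to numerical classes at the end. Concretely, the delicate point is verifying that the closure of the composed strict image $(f^m)_0\circ(f^n)_0$ applied to our fixed test cycle really does dominate (as an effective cycle) the cycle defining $\deg'_p(f^{m+n}|\pi)$, modulo components supported in $\mathcal{B}_f$ which contribute nothing to the degree of the $0$-dimensional intersection; this requires choosing $\mathcal{B}_f$ to simultaneously absorb the critical/indeterminacy loci of $f^m$, $f^n$, $f^{m+n}$ and the $\pi$-preimage of a bad set in $Y$, and then checking set-theoretically that the strict transforms agree off $\mathcal{B}_f$. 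Once that is in place, Fekete's lemma finishes the argument, and the equality of the two limits $\lim\deg'_p(f^n|\pi)^{1/n} = \lim\deg_p(f^n|\pi)^{1/n}$ is immediate from Lemma \ref{LemmaRDD3}.
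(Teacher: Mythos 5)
Your overall strategy --- reduce to $\deg'_p$ via Lemma \ref{LemmaRDD3}, prove a quasi-submultiplicativity inequality $\deg'_p(f^{m+n}|\pi)\leq C\,\deg'_p(f^m|\pi)\,\deg'_p(f^n|\pi)$ by composing strict transforms outside a well-chosen bad set, and then invoke Fekete's lemma --- is exactly the route the paper takes, and your appeal to Theorem \ref{TheoremFiber} to avoid any Sard-type argument is also correct. However, there is a genuine gap in the step where you bound the class of the intermediate cycle. You assert that the inner strict pushforward $R$ (the closure of $(f^n)_0(\omega_X^{k-l-p}\wedge(\pi^{-1}(\omega_Y^l)\setminus\mathcal{B}))$) is controlled ``in terms of $\deg'_p(f^n|\pi)$ times a fixed power of $\omega_X$''. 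A bound of the form $R\leq C\,\deg'_p(f^n|\pi)\,\omega_X^{k-p}$ in $N^{k-p}(X)$ discards precisely the information that makes the degree \emph{relative}: applying $(f^m)_0$ to that upper bound and taking degrees produces $\deg'_p(f^n|\pi)$ times $\deg((f^m)_*\omega_X^{k-p})=\deg((f^m)^*\omega_X^{p}.\omega_X^{k-p})$, which is the quantity governing the \emph{absolute} dynamical degree $\lambda_p(f)$, not $\deg'_p(f^m|\pi)$. In general the absolute quantity can grow strictly faster than the relative one, so the resulting inequality is too weak to yield the theorem.

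What is actually needed --- and what the paper supplies at this step --- is a class bound that retains the fibration structure. Because $\pi\circ f^n=g^n\circ\pi$, the cycle $R$ is supported in $\pi^{-1}(g_*^n(\omega_Y^l))$, the union of fibers of $\pi$ over the $0$-cycle $g_*^n(\omega_Y^l)$. Applying Roberts' moving lemma \emph{inside} that $(k-l)$-dimensional ambient variety gives the sharper estimate $R\leq C\,\deg(R)\,\omega_X^{k-l-p}\wedge\pi^{-1}(g_*^n(\omega_Y^l))$ in $N^*(X)$, which is equation (\ref{EquationRDD1}) in the paper. Pushing \emph{that} bound forward by $(f^m)_0$, and using that each fiber $\pi^{-1}(y)$ with $y$ in the support of $g_*^n(\omega_Y^l)$ is numerically interchangeable with a component of a generic $\pi^{-1}(\omega_Y^l)$, one lands on $\deg'_p(f^m|\pi)$ rather than the absolute degree. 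Your argument should be amended to state and use this fibered comparison class explicitly; a pure power of $\omega_X$ does not suffice.
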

\begin{proof}
By Lemma \ref{LemmaRDD3}, we only need to show that the first limit exists. To this end, it suffices to show that there is a constant $C>0$ independent of $m,n$ and $f$ such that
\begin{eqnarray*}
\deg '_p(f^{n+m}|\pi )\leq C\deg '_p(f^n|\pi ).\deg '_p(f^m|\pi ).
\end{eqnarray*}

We define $R=$ the closure of $f^n_0(\omega _X^{k-l-p}\wedge (\pi ^{-1}(\omega _Y^l)-\mathcal{B}))$, where $\mathcal{B}$ is a proper subvariety of $X$, depending on $f^n$, $f^m$ and $f^{n+m}$, so that the strict transform $f^m_0(R)$ is well-defined and equals the closure of $f^{n+m}_0(\omega _X^{k-l-p}\wedge (\pi ^{-1}(\omega _Y^l)-\mathcal{B}))$. By choosing $\omega _X^{k-l-p}$ and $\omega _Y^l$ generically, we can assume that $R$ is a subvariety of codimension $k-l-p$ of $\pi ^{-1}(g_*^n(\omega _Y^l))$ and has proper intersections with the "bad" set $\mathcal{B}$, using the property $\pi \circ f=g\circ \pi$. We note that by definition the degree of $R$ is $\deg '_p(f^n|\pi )$. 

We observe that in $N^*(X)$:
\begin{equation}
R\leq C\deg (R)\omega _X^{k-l-p}\wedge \pi ^{-1}(g_*^n(\omega _Y^l)),
\label{EquationRDD1}\end{equation}
here $C>0$ is a positive constant independent of $R$, $m,n$ and the maps $\pi$, $f$ and $g$. In fact, consider the embedding $\pi ^{-1}(g_*^n(\omega _Y^l))\subset \mathbb{P}^N$ induced from the embedding $X\subset \mathbb{P}^N$. Regardless whether $\pi ^{-1}(g_*^n(\omega _Y^l))$ is smooth or not, it has the correct dimension $k-l$. We then can use Roberts' version of Chow's moving lemma, to have that if $L\subset \mathbb{P}^N$ is any linear subspace of dimension $n-(k-l)-1$ and $C_L(R)$ is the cone over $R$ with vertex $L$, then $C_L(R)\cap \pi ^{-1}(g_*^n(\omega _Y^l))$ has the correct dimension $\dim (R)$. In particular, since $R$ is one component of $C_L(R)\cap \pi ^{-1}(g_*^n(\omega _Y^l))$ and the degree of $C_L(R)$ is bounded from above by $C\deg (R)$, we obtain Equation (\ref{EquationRDD1}). 

Using the same arguments as in the proof of Lemma \ref{LemmaDegreeOfStrictTransform}, we can show moreover that  in $N^*(X)$:
\begin{equation}
f^m_0(R)\leq C\deg (R)f^m_0(\omega _X^{k-l-p}\wedge \pi ^{-1}(g_*^n(\omega _Y^l))).
\label{EquationRDD2}\end{equation}
This completes the proof of Theorem \ref{TheoremRDD4}, by noting that $\pi ^{-1}(g_*^n(\omega _Y^l))$ is contained in at most $\deg (Y)$ fibers of the form $\pi ^{-1}(\omega _Y^l)$.
\end{proof}
Hence, part 1) of Theorem \ref{TheoremRelativeDynamicalDegrees} is proven, except we did not show that the limit is independent of the choice of the embeddings $\iota _X$ and $\iota _Y$. But this is clear since $\omega _X$ is ample and $\omega _Y^l$ has the same class in $N^*(Y)$ as a positive multiple of the class of a point, and relative dynamical degrees can be computed on numerical classes. That these numbers are $\geq 1$ follows easily from the fact that they are limits of the $n$-th roots of degrees of non-trivial subvarieties of $X$, and the latter are positive integers.

\subsection{The general case} In this subsection we prove Theorem \ref{TheoremRelativeDynamicalDegrees} in the general case. Hence, we do not assume that $X$ or $Y$ is smooth or $\pi$ is regular. We only require that $X$ and $Y$ are irreducible. We need to make use of the following special case of Theorem \ref{TheoremFurtherProperty}.
\begin{lemma}
Let $X,Z$ be irreducible smooth projective varieties and $Y$ an (not necessarily smooth) irreducible projective variety. Assume that $\dim (X)=\dim (Z)$, and there are two semi-conjugacies $\varphi :(Z,h)\rightarrow (X,f)$ and $\pi :(X,f)\rightarrow (Y,g)$, where both $\varphi $ and $\pi $ are dominant regular morphisms. Then the relative dynamical degrees (as defined in the previous subsection) of the semi-conjugacies  $\pi :(X,f)\rightarrow (Y,g)$ and $\pi \circ \varphi :(Z,h)\rightarrow (Y,g)$ are the same.
\label{LemmaSemiconjugacyInvariant}\end{lemma}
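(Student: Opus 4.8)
The plan is to reduce the statement to a comparison of the auxiliary degrees from Section 5.1: I would prove that there is a constant $C>0$, independent of $n$, with
$$C^{-1}\,\deg_p(f^n\mid\pi)\ \le\ \deg_p(h^n\mid\pi\circ\varphi)\ \le\ C\,\deg_p(f^n\mid\pi)$$
for all $n$ and all $0\le p\le k-l$. Since, by Theorem \ref{TheoremRDD4}, each relative dynamical degree is the limit of the $n$-th root of the corresponding sequence, this forces $\lambda_p(h\mid\pi\circ\varphi)=\lambda_p(f\mid\pi)$.

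First, the setup. As $\varphi:Z\to X$ is a dominant regular morphism of irreducible projective varieties with $\dim Z=\dim X$, it is generically finite; put $d=\deg(\varphi)$. Because $Z$ and $X$ are smooth, $\varphi^*$ and $\varphi_*$ satisfy the projection formula and $\varphi_*\varphi^*=d\cdot\mathrm{id}$ on numerical classes, and each of $\varphi^*$, $\varphi_*$ distorts the degree of an effective cycle by at most a bounded factor (depending on the chosen embeddings and on $\deg(\varphi^*\omega_X)$, not on $n$). The composite $\pi\circ\varphi$ is again a dominant regular morphism, and $\pi\circ\varphi\circ h^n=g^n\circ\pi\circ\varphi$ by Lemma \ref{LemmaSemiconjugacyAssociativity}, so $\lambda_p(h\mid\pi\circ\varphi)$ is indeed defined by Theorem \ref{TheoremRDD4}. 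The only facts relating $h$ to $f$ that I would use are the identities of correspondences $\varphi\circ h^n=f^n\circ\varphi$ (all $n$, Lemma \ref{LemmaSemiconjugacyAssociativity}) and $\pi\circ f^n=g^n\circ\pi$, together with $(\pi\circ\varphi)^{-1}(\omega_Y^l)=\varphi^{-1}(\pi^{-1}(\omega_Y^l))$ for a generic complete intersection $\omega_Y^l$, which holds set-theoretically with the expected dimension by Theorem \ref{TheoremFiber}.

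For the upper bound I would run the argument of Theorem \ref{TheoremRDD4} on $Z$: represent $\deg_p(h^n\mid\pi\circ\varphi)=\deg'_p(h^n\mid\pi\circ\varphi)$ (Lemma \ref{LemmaRDD3}) by the closure $R$ of $h^n_0\big(\omega_Z^{k-l-p}\wedge((\pi\circ\varphi)^{-1}(\omega_Y^l)-\mathcal B)\big)$, and push $R$ forward by $\varphi$. Using $\varphi\circ h^n=f^n\circ\varphi$ and $(\pi\circ\varphi)^{-1}(\omega_Y^l)=\varphi^{-1}(\pi^{-1}(\omega_Y^l))$, the cycle $\varphi_*R$ is, outside a ``bad'' set, a strict transform of $\omega_X^{k-l-p}\wedge(\pi^{-1}(\omega_Y^l)-\mathcal B')$ by $f^n\circ\varphi$ (equivalently by $f^n$ applied after the morphism $\varphi$); the strict-transform and composition estimates of Section 4 (Lemmas \ref{LemmaDegreeOfStrictTransform} and \ref{LemmaDegreeOfCompositionMaps}, together with \ref{LemmaChowMovingForProduct}) then bound $\deg(\varphi_*R)$ by $C\deg_p(f^n\mid\pi)$, the constant absorbing $d$ and the bounded number of components involved, while $\deg R\le C'\deg(\varphi_*R)$ since $\varphi$ is finite onto its image away from a closed set. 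For the lower bound I would transport cycles upward: take a representative of $\deg_p(f^n\mid\pi)$ on $X$, apply $\varphi^*$ (effective for generic choices), use $\varphi_*\varphi^*=d\cdot\mathrm{id}$ to recover the original degree, and compare $\varphi^*\!\big((f^n)^*(\omega_X^p)\wedge\pi^{-1}(\omega_Y^l)\big)$ with $(h^n)^*(\varphi^*\omega_X^p)\wedge(\pi\circ\varphi)^{-1}(\omega_Y^l)$ using once more $\varphi\circ h^n=f^n\circ\varphi$ and the Section 4 estimates; since relative dynamical degrees may be computed with any big and nef polarization on $Z$, in particular with $\varphi^*\omega_X$ in place of $\omega_Z$, this yields $\deg_p(f^n\mid\pi)\le C\deg_p(h^n\mid\pi\circ\varphi)$. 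Taking $n$-th roots and letting $n\to\infty$ finishes the proof.

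The main obstacle is the one already present in Theorem \ref{TheoremRDD4}: pullback and strict transform of cycles by a correspondence are only submultiplicative, so there are no clean functorial identities to exploit, and one must carefully track the ``bad'' subvarieties $\mathcal B$ along which the strict transforms of $h^n$, $f^n$ and their composites with $\varphi$ may fail to coincide, while using Theorem \ref{TheoremFiber} to keep all the set-theoretic preimages occurring ($\varphi^{-1}$ of generic fibres of $\pi$, preimages under $g^n$, etc.) of the expected dimension. A secondary technical point is that $\varphi^*\omega_X$ need only be big and nef, not ample, on $Z$; one checks that this still suffices for the degree computations above, or else carries the entire comparison at the level of the degrees $\deg'_p$ without passing to numerical classes on $Z$.
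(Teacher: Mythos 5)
Your high-level strategy is the same as the paper's: prove $C^{-1}\deg_p(f^n\mid\pi)\le\deg_p(h^n\mid\pi\circ\varphi)\le C\deg_p(f^n\mid\pi)$ uniformly in $n$, using the semi-conjugacy $\varphi\circ h^n=f^n\circ\varphi$ and pushforward/pullback by the generically finite regular morphism $\varphi$, then take $n$-th roots. The paper proves the $\le$ direction and asserts the reverse is ``similar (and easier).''

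There is, however, a genuine gap in your proof of the upper bound. You take the representing cycle $R$ on $Z$ and argue $\deg(\varphi_*R)\le C\deg_p(f^n\mid\pi)$, and then claim $\deg R\le C'\deg(\varphi_*R)$ ``since $\varphi$ is finite onto its image away from a closed set.'' This last inequality is precisely what is \emph{not} available: by the projection formula, $\deg(\varphi_*R)=R\cdot(\varphi^*\omega_X)^p$ while $\deg R=R\cdot\omega_Z^p$, and $\varphi^*\omega_X$ is big and nef but not ample, so there is no constant $C'$ (uniform over the $R$'s that arise) with $R\cdot\omega_Z^p\le C'\,R\cdot(\varphi^*\omega_X)^p$. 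This is not the ``secondary technical point'' you flag at the end --- it is the central obstruction, and ``one checks that this still suffices'' does not resolve it. The same issue re-enters in your lower bound, where you invoke, without justification, that relative dynamical degrees may be computed with an arbitrary big and nef polarization such as $\varphi^*\omega_X$; the paper's construction uses ampleness.

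The paper circumvents the obstacle differently. It uses the duality in Lemma~\ref{LemmaRDD3} to write the relevant quantity as an intersection number (a pairing) $\langle\omega_Z^p,(h^n)_*(\varphi^*\pi^*(\omega_Y^l)\wedge\omega_Z^{k-l-p})\rangle$, computed as a zero-cycle on a Zariski-open $U'\subset Z$ over which $\varphi$ is finite of degree $a$. It then pushes this zero-cycle forward by $\varphi$, obtaining an inequality $\le a\,\varphi'_*(\omega_Z^p)\wedge\varphi'_*[\cdots]$ --- the point is that pushforward of a zero-cycle under a finite map is degree-controlled (by $a$), which sidesteps the nef-vs-ample comparison entirely. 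It then uses the semi-conjugacy at the level of strict transforms and Roberts' moving lemma on $X$ (as in Lemma~\ref{LemmaChowMovingForProduct}) to replace the pushed-forward class $\varphi'_*(\omega_Z^p)$ by a bounded multiple of $\omega_X^p$ in $N^*(X)$, yielding $\le A'\deg_p(f^n\mid\pi)$. So the comparison never leaves the realm of intersection numbers, exactly as your final sentence hints one might do (``carry the entire comparison at the level of the degrees $\deg'_p$''); but that alternative is the correct route and needs to be carried out, not offered as a fallback.
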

\begin{proof}
Let $U\subset X$ be a non-empty Zariski open set such that all fibers $\varphi ^{-1}(x)$ are finite. Define $U'=\varphi ^{-1}(U)$. Let $(f^n)_0$ and $(h^n)_0$ be the strict pushforward with respect to $U$ and $\varphi ^{-1}(U)$. Choose $\omega _X^p$, $\omega _Z^p$ and $\omega _Y^l$ to be generic hyperplane sections. By duality, we have 
\begin{eqnarray*}
\lambda _p(h|\pi \circ \varphi )=\lim _{n\rightarrow \infty}<\omega _Z^p,(h^n)_*(\varphi ^*\pi ^*(\omega _Y^l)\wedge \omega _Z^{k-l-p})>^{1/n}.
\end{eqnarray*}
Fix $n\in \mathbb{N}$. We now estimate the number $<\omega _Z^p,(h^n)_*(\varphi ^*\pi ^*(\omega _Y^l)\wedge \omega _Z^{k-l-p})>$. From the previous results, we see that by choosing $U$ appropriately, we may assume that $(h^n)|_{U'}(\varphi ^*\pi ^*(\omega _Y^l)\wedge \omega _Z^{k-l-p})$ is well-defined as a variety, of the same class in $N^*(Z)$ as $(h^n)_*(\varphi ^*\pi ^*(\omega _Y^l)\wedge \omega _Z^{k-l-p})$, the intersection $\omega _Z^p|_{U'}\wedge (h^n)|_{U'}(\varphi ^*\pi ^*(\omega _Y^l)\wedge \omega _Z^{k-l-p})$ is well-defined and is the same as $<\omega _Z^p,(h^n)_*(\varphi ^*\pi ^*(\omega _Y^l)\wedge \omega _Z^{k-l-p})>$, and 
\begin{eqnarray*}
(\varphi _{U'})_*(h^n)|_{U'}(\varphi ^*\pi ^*(\omega _Y^l)\wedge \omega _Z^{k-l-p})=(f^n)|_U(\varphi _{U'})_*\varphi ^*\pi ^*(\omega _Y^l)\wedge \omega _Z^{k-l-p}.
\end{eqnarray*}
Define $\varphi '=\varphi |_{U'}$. Let $a$ be the degree of $\varphi$. Since $\omega _Z^p|_{U'}\wedge (h^n)|_{U'}(\varphi ^*\pi ^*(\omega _Y^l)\wedge \omega _Z^{k-l-p})$ is a finite number of points, and $\varphi :U'\rightarrow U$ is a finite map, it follows that the wedge intersection $\varphi '_*(\omega _Z^p)|_{U'}\wedge \varphi '_*[(h^n)|_{U'}(\varphi ^*\pi ^*(\omega _Y^l)\wedge \omega _Z^{k-l-p})]$ is well-defined. Moreover, since the degree of $\varphi $ is $a$, for any point $x\in U$ there are at most $a$ points in $\varphi ^{-1}(x)$. Thus, we obtain
\begin{eqnarray*}
<\omega _Z^p,(h^n)_*(\varphi ^*\pi ^*(\omega _Y^l)\wedge \omega _Z^{k-l-p})>&=&\omega _Z^p|_{U'}\wedge (h^n)|_{U'}(\varphi ^*\pi ^*(\omega _Y^l)\wedge \omega _Z^{k-l-p})\\
&\leq& a\varphi '_*(\omega _Z^p)\wedge \varphi '_*[(h^n)|_{U'}(\varphi ^*\pi ^*(\omega _Y^l)\wedge \omega _Z^{k-l-p})]. 
\end{eqnarray*}
Since $\varphi \circ h=f\circ \varphi$, we can assume also that $\varphi '_*[(h^n)|_{U'}(\varphi ^*\pi ^*(\omega _Y^l)\wedge \omega _Z^{k-l-p})]=(f^n)|_{U}\varphi '_*[\varphi ^*\pi ^*(\omega _Y^l)\wedge \omega _Z^{k-l-p}]$. We then obtain
\begin{eqnarray*}
<\omega _Z^p,(h^n)_*(\varphi ^*\pi ^*(\omega _Y^l)\wedge \omega _Z^{k-l-p})>\leq a \varphi '_*(\omega _Z^p)\wedge (f^n)|_{U}[\pi ^*(\omega _Y^l)\wedge \varphi '_*(\omega _Z^{k-l-p})]. 
\end{eqnarray*}  
We can choose $U$ such that $(f^n)|_{U}[\pi ^*(\omega _Y^l)\wedge \varphi '_*(\omega _Z^{k-l-p})=(f^n)_*[\pi ^*(\omega _Y^l)\wedge \varphi '_*(\omega _Z^{k-l-p}]$. Using the Main Lemma in \cite{roberts}, as in the proof of Lemma \ref{LemmaChowMovingForProduct}, we can find a cone $C_L(\varphi '_*(\omega _Z^p))$ which intersects $(f^n)|_{U}[\pi ^*(\omega _Y^l)\wedge \varphi '_*(\omega _Z^{k-l-p})$ properly on $U$, and by choosing a $1$-family of automorphisms $g(t)$ of $\mathbb{P^N}$ we have that $g(t)C_L(\varphi '_*(\omega _Z^p))|_{X}$ intersects $(f^n)|_{U}[\pi ^*(\omega _Y^l)\wedge \varphi '_*(\omega _Z^{k-l-p})$ properly, for generic $t\in \mathbb{P}^1$. From this we obtain, where from the second line the computations are in $N^*(X)$ and we use that the class of $g(t)C_L(\varphi '_*(\omega _Z^p)|_X$ is a constant multiple of $\omega _X^p$, where the constant depends only on $\varphi$ and $\omega _Z$. 
\begin{eqnarray*}
&&\varphi '_*(\omega _Z^p)\wedge (f^n)|_{U}[\pi ^*(\omega _Y^l)\wedge \varphi '_*(\omega _Z^{k-l-p})]\\
&\leq &g(t)C_L(\varphi '_*(\omega _Z^p))|_{X}.(f^n)_*[\pi ^*(\omega _Y^l)\wedge \varphi '_*(\omega _Z^{k-l-p}]\\ 
&=&<[g(t)C_L(\varphi '_*(\omega _Z^p))|_{X}],(f^n)_*[\pi ^*(\omega _Y^l)\wedge \varphi '_*(\omega _Z^{k-l-p}]>\\
&=&<\omega _X^p,(f^n)_*[\pi ^*(\omega _Y^l)\wedge \varphi '_*(\omega _Z^{k-l-p}]>\\
&=&<[(f^n)^*(\omega _X^p)\wedge \pi ^*(\omega _Y^l)],[\varphi _*\omega _Z^{k-l-p}]>\\
&\leq&A<[(f^n)^*(\omega _X^p)\wedge \pi ^*(\omega _Y^l)],[\omega _X^{k-l-p}]>\\&&
\times <[\varphi _*\omega _Z^{k-l-p}],[\omega _X^{k+l}]>\\
&\leq &A'<[(f^n)^*(\omega _X^p)\wedge \pi ^*(\omega _Y^l)],[\omega _X^{k-l-p}]>\\
&=&A'\deg _p(f^n|\pi ).
\end{eqnarray*}
In the fourth line, we have used the proof of Lemma \ref{LemmaRDD2} to represent the class $[(f^n)^*(\omega _X^p)].[\pi ^*(\omega _Y^l]$ by the class of a variety $(f^n)^*(\omega _X^p)\wedge \pi ^*(\omega _Y^l)$. The fifth line follows from the fourth line by applying Lemma \ref{LemmaDegreeOfIntersections}. 

From the above computation, we obtain $\lambda _p(h|\pi \circ \varphi )\leq \lambda _p(f|\pi )$. The reverse inequality can be proven similarly (and easier). 
\end{proof}

Now we proceed to defining relative dynamical degrees in this general case. 
Let $\tau :X'\rightarrow X$ be an alteration, where $\tau$ is generically finite and so that $\pi \circ \tau$ is a regular morphism. Let $\tau ^*f$ be the pullback of the correspondence $f$ by $\tau$, as defined in Section 3. We then define, using the regular case considered in the previous subsection
\begin{eqnarray*}
\lambda _p(f|\pi ):=\frac{1}{\deg (\tau )}\lambda _p(\tau ^*f|\tau \circ \pi ). 
\end{eqnarray*}
To show that this is well-defined, we need to show the following. Let $\tau _1:X_1\rightarrow X$ and $\tau _2:X_2\rightarrow X$ be two alterations, so that $\pi \circ \tau _1$ and $\pi \circ \tau _2$ are regular, then
\begin{eqnarray*}
\frac{1}{\deg (\tau _1)}\lambda _p(\tau _1^*f|\tau _1\circ \pi )=\frac{1}{\deg (\tau _2)}\lambda _p(\tau _2^*f|\tau _2\circ \pi ).
\end{eqnarray*}
To this end, we first observe that there is another alteration $\tau _3:X_3\rightarrow X$ together with two generically finite morphisms $\varphi _1:X_3\rightarrow X_1$ and $\varphi _2:X_3\rightarrow X_2$ such that $\tau _1\circ \varphi _1=\tau _3=\tau _2\circ \varphi _2$. In fact, let $\Gamma \subset X_1\times X_2$ be an irreducible component of the correct dimension of the set $\{(x_1,x_2)\in X_1\times X_2:~\tau _1(x_1)=\tau _2(x_2)\}$ and which is dominant over $X_1$ and $X_2$. Then any alteration $X_3$ of $\Gamma$ will satisfy the requirement. 

Since $\tau _3=\tau _1\circ \varphi _1=\tau _2\circ \varphi _2$, we have $\varphi _1^*(\tau _1^*f)=\tau _3^*f=\varphi _2^*(\tau _2^*f)$. By Lemma \ref{LemmaSemiconjugacy1}, we have semi-conjugacies $(X_3,\tau _3^*f)\rightarrow (X_1,\deg (\varphi _1)\tau _1^*f)$ and $(X_3,\tau _3^*f)\rightarrow (X_2,\deg (\varphi _2)\tau _2^*f)$. Writing $\pi _i=\pi \circ \tau _i$ for $i=1,2,3$, by Lemma \ref{LemmaSemiconjugacyInvariant}, we have 
\begin{eqnarray*}
\deg (\varphi _1)\lambda _p(\tau _1^*f|\pi _1)=\lambda _p(\tau _3^*f|\pi _3)=\deg (\varphi _2)\lambda _p(\tau _2^*f|\pi _2),
\end{eqnarray*}
which, together with the equality $\deg (\varphi _1)\deg (\tau _1)=\deg (\tau _3)=\deg (\varphi _2)\deg (\tau _2)$, gives us the desired equality
\begin{eqnarray*}
\frac{1}{\deg (\tau _1)}\lambda _p(\tau _1^*f|\pi _1)=\frac{1}{\deg (\tau  _3)}\lambda _p(\tau _3^*f| \pi _3)=\frac{1}{\deg (\tau _2)}\lambda _p(\tau _2^*f|\pi _2). 
\end{eqnarray*}

Thus the relative dynamical degrees in this case are well-defined. Part 2) of Theorem \ref{TheoremRelativeDynamicalDegrees} follows from the following lemma. 
\begin{lemma}
Let be given two semi-conjugacies $\pi _1:(X_1,f_1)\rightarrow (Y,g)$ and $\pi _2:(X_2,f_2)\rightarrow (Y_2,g_2)$, together with two  semi-conjugacies $\varphi :(X_2,f_2)\rightarrow (X_1,f_1)$ and $\psi : (Y_2,g_2)\rightarrow (Y_1,g_1)$ where the two composition maps $\psi \circ \pi _2$ and $\pi _1\circ \varphi$ are the same. Assume that $\psi :Y_2\rightarrow Y_1$ is a generically finite rational map, and moreover that $\varphi$ is birational.  Then $\lambda _p(f_1|\pi _1)=\lambda _p(f_2|\pi _2)$ for all $p=0,\ldots ,\dim (X_1)-\dim (Y_1)$.
\label{LemmaBirationalInvariant}\end{lemma}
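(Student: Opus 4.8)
The plan is to reduce the statement to the already‑established well‑definedness of the relative dynamical degrees and to the regular case of Theorem~\ref{TheoremRDD4}, by passing to a single alteration that is admissible simultaneously for $\pi_1$, for $\varphi$ and for $\pi_2$. The first move is to dispose of $\varphi$: since $\varphi:(X_2,f_2)\to(X_1,f_1)$ is a semi‑conjugacy with $\varphi$ birational, necessarily $\deg(\varphi)=1$, and then the equi‑dimensional pullback of Lemma~\ref{LemmaSemiconjugacy1} (applied to $\varphi$ and the graph of $f_1$) gives $f_2=\varphi^*f_1$. So it is enough to compare $\lambda_p(f_1|\pi_1)$ with $\lambda_p(\varphi^*f_1|\pi_2)$, the two base maps being tied together by $\pi_1\circ\varphi=\psi\circ\pi_2$ with $\psi$ generically finite.

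Next I would choose, using de Jong's theorem, an alteration $\tau:Z\to X_1$ (with $Z$ smooth projective) for which not only $\pi_1\circ\tau$ is a regular morphism, but also the rational maps $\varphi^{-1}\circ\tau$ (which make sense because $\varphi$ is birational) and $\pi_2\circ\varphi^{-1}\circ\tau$ are regular morphisms; concretely one may take $Z$ to be an alteration of the closure of the graph of the rational map $(\pi_1,\varphi^{-1},\pi_2\circ\varphi^{-1}):X_1\to Y_1\times X_2\times Y_2$. Put $\tilde\tau:=\varphi^{-1}\circ\tau:Z\to X_2$. Then $\tilde\tau$ is again a generically finite regular morphism, $\deg(\tilde\tau)=\deg(\tau)$ (since $\varphi$, hence $\varphi^{-1}$, is birational), $\varphi\circ\tilde\tau=\tau$, and $\pi_2\circ\tilde\tau$ is regular; thus $\tau$ is an alteration admissible for computing $\lambda_p(f_1|\pi_1)$ and $\tilde\tau$ one for computing $\lambda_p(f_2|\pi_2)$. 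Using the functoriality of the pullback of correspondences from Section~3 — the identity $\varphi_1^*(\tau_1^*f)=\tau_3^*f$ already invoked in the proof that $\lambda_p$ is well defined — one obtains
\[
\tau^*f_1=(\varphi\circ\tilde\tau)^*f_1=\tilde\tau^*(\varphi^*f_1)=\tilde\tau^*f_2=:h ,
\]
while $\pi_1\circ\tau=\pi_1\circ\varphi\circ\tilde\tau=\psi\circ(\pi_2\circ\tilde\tau)$.

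It then remains to assemble the pieces. By the definition of the relative dynamical degrees in the general case, $\lambda_p(f_1|\pi_1)=\deg(\tau)^{-1}\lambda_p(h\,|\,\pi_1\circ\tau)$ and $\lambda_p(f_2|\pi_2)=\deg(\tilde\tau)^{-1}\lambda_p(h\,|\,\pi_2\circ\tilde\tau)$, and $\deg(\tau)=\deg(\tilde\tau)$; writing $\rho:=\pi_2\circ\tilde\tau:Z\to Y_2$ (a regular morphism) and noting $\pi_1\circ\tau=\psi\circ\rho$, the whole statement collapses to $\lambda_p(h\,|\,\psi\circ\rho)=\lambda_p(h\,|\,\rho)$. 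Both sides are defined, since $\rho$ and $\psi\circ\rho$ are semi‑conjugacies over suitable multiples of $g_2$, resp.\ $g_1$. For this last equality I would argue that $\lambda_p(h\,|\,\sigma)$ depends on $\sigma$ only through the numerical class of $\sigma^{-1}(\omega^{l})$, where $l=\dim Y_1=\dim Y_2$ (Lemma~\ref{LemmaRDD2} and the discussion following Theorem~\ref{TheoremRDD4}): since $\psi$ is generically finite, $(\psi\circ\rho)^{-1}(\omega_{Y_1}^{l})=\rho^{-1}\bigl(\psi^{-1}(\omega_{Y_1}^{l})\bigr)$, and $\psi^{-1}(\omega_{Y_1}^{l})$ is a generic effective $0$‑cycle on the irreducible variety $Y_2$ of some positive degree $c$, hence algebraically — therefore numerically — equivalent to $(c/\deg Y_2)\,\omega_{Y_2}^{l}$ (all points of a projective variety are connected by curves), and this equivalence is preserved by pullback along the regular morphism $\rho$ from the smooth $Z$. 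Consequently $\deg_p(h^n\,|\,\psi\circ\rho)=(c/\deg Y_2)\,\deg_p(h^n\,|\,\rho)$ for every $n$, and the positive constant disappears on taking $n$‑th roots; this yields $\lambda_p(f_1|\pi_1)=\lambda_p(f_2|\pi_2)$ for all $p=0,\dots,\dim(X_1)-\dim(Y_1)$.

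The step I expect to be the main obstacle is the construction of $\tau$ in the second paragraph and the verification that $\tilde\tau=\varphi^{-1}\circ\tau$ is a genuine degree‑preserving alteration of $X_2$ — this is precisely where the hypothesis that $\varphi$ is birational is indispensable, both for $f_2=\varphi^*f_1$ and for $\deg(\tilde\tau)=\deg(\tau)$. Once that is in place, the generically‑finite hypothesis on $\psi$ enters only through the routine base‑change computation of the last paragraph, and what is left is bookkeeping with degrees together with the previously established well‑definedness and functoriality of pullbacks.
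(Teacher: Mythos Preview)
Your proposal is correct and follows essentially the same route as the paper's proof: establish $f_2=\varphi^*f_1$ from birationality of $\varphi$, pass to a common smooth alteration $Z$ on which both $\pi_1$- and $\pi_2$-compositions are regular, identify the two pulled-back correspondences on $Z$, and finish by observing that the numerical class of the fibre cycle $\rho^*(\psi^*\omega_{Y_1}^{l})$ is a positive multiple of $\rho^*(\omega_{Y_2}^{l})$. The only cosmetic difference is that the paper builds the alteration over $X_2$ (taking $\tau_2:Z\to X_2$ and setting $\tau_1=\varphi\circ\tau_2$) whereas you build it over $X_1$ and pass to $X_2$ via $\varphi^{-1}$; both constructions yield the same $Z$ and the same endgame.
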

\begin{proof}
Since $\varphi :X_2\rightarrow X_1$ is a birational map, it can be easily checked that $f_2=\varphi ^*(f_1)$. Let $\tau _2:Z\rightarrow X_2$ be an alteration (so $Z$ is smooth and $\tau _2$ is a generically finite regular morphism) so that  the composition maps $\tau _1=\varphi \circ \tau _2:Z\rightarrow X_1$, $\pi _2'=\pi _2\circ \tau _2:Z\rightarrow Y_2$ and $\pi _1'=\pi _1\circ  \tau _1:Z\rightarrow Y_1$ are all regular. Let $h:Z\rightarrow Z$ be the pullback of $f_2$ by $\tau _2$, that is $h=\tau _2^*f_2$. Then from $\tau _1=\varphi \circ \tau _2$ and $f_2=\varphi ^*(f_1)$, we have $\tau _1^*(f_1)=\tau _2^*\varphi ^*(f_1)=\tau _2^*(f_2)$. Therefore, from $\deg (\tau _1)=\deg (\tau _2)$ since $\pi$ is birational, by definition we obtain
\begin{eqnarray*}
\lambda _p(f_1|\pi _1)&=&\frac{1}{\deg (\tau _1)}\lambda _p(\tau _1^*(f_1)|\pi _1')\\
&=&\frac{1}{\deg (\tau _2)}\lambda _p(\tau _2^*(f_2)|\pi _2')\\
&=&\lambda _p(f_2|\pi _2).
\end{eqnarray*}     
Here the second equality between $\lambda _p(\tau _1^*(f_1)|\pi _1')$ and $\lambda _p(\tau _2^*(f_2)|\pi _2')$ remains to be explained, since the spaces involved are different. Define $f:=\tau _1^*(f_1):Z\rightarrow Z$, which is also the same as $\tau ^*(f_2)$. From the assumption of the lemma, we have $\pi _1'=\psi \circ \pi _2'$. Therefore, $(\pi _1')^*(\omega _{Y_1}^l)=(\pi _2')^*(\psi ^*(\omega _{Y_1}^l))$. Using part 1) of Theorem \ref{TheoremRelativeDynamicalDegrees}, which says that relative dynamical degrees can be computed using numerical classes, and the fact that in $N^*(Y_2)$ the two classes of $\psi ^*(\omega _{Y_1}^l)$ and $\omega _{Y_2}^l$ are positive multiples of each other, the desired equality follows. 
\end{proof}

Now we proceed to proving the log-concavity of relative dynamical degrees i.e. part 3) of Theorem \ref{TheoremRelativeDynamicalDegrees}. We note that in the case the underlying field is $\mathbb{C}$, our proof here is different from and simpler than that used in Proposition 3.6 of \cite{dinh-nguyen}. 

\begin{lemma} Assume that there is an infinite sequence $n_i\in \mathbb{N}$ for which the graph of $f^{n_i}$ is irreducible for all $i$. Then, the relative dynamical degrees are log-concave, that is $\lambda _{p+1}(f|\pi )\lambda _{p-1}(f|\pi )\leq \lambda _p(f|\pi )^2$ for all $1\leq p\leq k-l-1$. 
\label{LemmaLogConcavityRelativeDynamicalDegrees}\end{lemma}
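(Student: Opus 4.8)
The plan is to reduce the statement to a Khovanskii--Teissier type inequality for three nef classes on a single smooth projective variety, for which the Grothendieck--Hodge index theorem of Section~2 (in its surface case) will suffice.

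\textbf{Reduction to a clean intersection inequality.} First I would fix, as in the previous subsection, an alteration $\tau\colon X_1\to X$ with $X_1$ smooth projective and $\pi\circ\tau$ regular, so that by construction $\lambda_q(f|\pi)=\deg(\tau)^{-1}\lambda_q(\tau^*f|\pi\circ\tau)$ for every $q$. Since log-concavity of the triple $(\lambda_{p-1},\lambda_p,\lambda_{p+1})$ is unaffected by a common positive rescaling, it is enough to prove that $q\mapsto\lambda_q(h|\pi_1)$ is log-concave, where $h:=\tau^*f$ and $\pi_1:=\pi\circ\tau$. By Theorem~\ref{TheoremRDD4} applied on $X_1$ one has $\lambda_q(h|\pi_1)=\lim_{i\to\infty}\deg_q(h^{n_i}|\pi_1)^{1/n_i}$, so it suffices to prove, for each $i$ and each $1\le p\le k-l-1$, that $\deg_{p-1}(h^{n_i}|\pi_1)\,\deg_{p+1}(h^{n_i}|\pi_1)\le\deg_p(h^{n_i}|\pi_1)^2$; taking $n_i$-th roots and letting $i\to\infty$ then gives the lemma.

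\textbf{A common smooth model for the graph of $h^{n_i}$.} The graph of $h^{n_i}$, being (an integer multiple of) the strict transform under $\tau\times\tau\colon X_1\times X_1\to X\times X$ of the irreducible variety $\Gamma$ underlying $\Gamma_{f^{n_i}}$, has all its irreducible components $\Gamma''_1,\dots,\Gamma''_r$ generically finite over $\Gamma$. I would then choose (de Jong) a smooth projective variety $W$ of dimension $k$ admitting generically finite morphisms $\rho_j\colon W\to\Gamma''_j$ which all induce one and the same map $\mu\colon W\to\Gamma$ --- for instance an alteration of a component, dominating $\Gamma$, of the fibre product $\Gamma''_1\times_\Gamma\cdots\times_\Gamma\Gamma''_r$ --- and set $s^j_1,s^j_2\colon W\to X_1$ to be $\rho_j$ composed with the two projections of $\Gamma''_j$. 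The crucial observation is that $\tau\circ s^j_1$ and $\tau\circ s^j_2$, hence also the (regular) composite $W\xrightarrow{s^j_1}X_1\xrightarrow{\pi_1}Y$, do not depend on $j$, since they all factor through $W\xrightarrow{\mu}\Gamma\to X$. Now I would polarise $X_1$ by the nef and big class $\tau^*\omega_X$ --- legitimate because relative dynamical degrees are insensitive to the choice of polarisation --- and then, expanding $(h^{n_i})^*$ via its graph (Method~1 of Section~4), applying the projection formula for each $\rho_j$, and using the $j$-independence just noted, I expect
\[
\deg_q(h^{n_i}|\pi_1)=c_i\,\big(A^q\cdot B^{\,k-l-q}\cdot L^l\big)_W ,
\]
where $A:=(s^1_2)^*\tau^*\omega_X$, $B:=(s^1_1)^*\tau^*\omega_X$ and $L:=(\pi_1\circ s^1_1)^*\omega_Y$ are nef classes on $W$, and $c_i>0$ is a constant independent of $q$.

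\textbf{Hodge index on a surface.} It then remains to show that $q\mapsto\big(A^q B^{\,k-l-q}L^l\big)_W$ is log-concave on the smooth irreducible projective $k$-fold $W$. I would perturb $A,B,L$ to ample and then, after passing to a multiple, to very ample classes; cutting $W$ by $p-1$ general members of $|A|$, by $k-l-p-1$ general members of $|B|$ and by $l$ general members of $|L|$ lands (Bertini, valid in any characteristic for very ample linear systems) on a smooth projective surface $S$, and the Grothendieck--Hodge index theorem on $S$ applied to the ample classes $A|_S,B|_S$ gives $(A|_S\cdot B|_S)^2\ge (A|_S)^2(B|_S)^2$, i.e. $\big(A^pB^{\,k-l-p}L^l\big)_W^{\,2}\ge\big(A^{p-1}B^{\,k-l-p+1}L^l\big)_W\big(A^{p+1}B^{\,k-l-p-1}L^l\big)_W$; letting the perturbation tend to $0$ preserves the inequality. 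Combined with the previous step, and then with Theorem~\ref{TheoremRDD4} and the normalisation of the first step, this yields $\lambda_{p-1}(f|\pi)\lambda_{p+1}(f|\pi)\le\lambda_p(f|\pi)^2$.

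\textbf{Where the difficulty lies.} The genuinely delicate point is the second step: when $X$ is singular the graph of $(\tau^*f)^{n_i}$ is really reducible, and a sum of log-concave sequences need not be log-concave --- this is precisely the mechanism behind the failure of log-concavity for general reducible correspondences noted in Remark~\ref{Remark1}. The argument is saved only by the fact that all the components $\Gamma''_j$ lie over the same \emph{irreducible} variety $\Gamma$, which lets one uniformise them by a single smooth model $W$; combined with polarising $X_1$ by a class pulled back from $X$, the contributions of the various components then coalesce into one positive multiple of a single intersection number on the \emph{irreducible} $W$, where the Grothendieck--Hodge index theorem does apply. The remaining technical points --- that $\lambda_q(h|\pi_1)$ may be computed using the merely nef-and-big polarisation $\tau^*\omega_X$ (the defining limit only sees numerical classes, and $\tau^*\omega_X$ is big as $\tau$ is generically finite), and that the Bertini reduction to a surface is legitimate in positive characteristic --- are routine and I would dispatch them quickly.
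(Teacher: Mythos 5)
Your proof is correct and follows essentially the same strategy as the paper: reduce to an alteration $X_1$ with $\pi_1=\pi\circ\tau$ regular, observe that every irreducible component of $\Gamma_{h^{n_i}}$ lies generically finitely over the single irreducible variety $\Gamma$ underlying $\Gamma_{f^{n_i}}$, pull the relevant intersection numbers back to a smooth irreducible model where all classes are nef, and conclude by the Grothendieck--Hodge index theorem. The only cosmetic difference is that you build one common alteration $W$ dominating all components of $\Gamma_{h^{n_i}}$ at once (so the sum over components collapses to a single $q$-independent constant $c_i$ times one intersection number), whereas the paper alters each component $\widetilde{G_q}$ separately and then compares each summand to a fixed intersection number on an alteration of $\Gamma$; you also spell out the Bertini reduction to a surface, which the paper leaves implicit in the phrase ``usual Grothendieck--Hodge index theorem.''
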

\begin{proof}
If $X$ is smooth and $\pi$ is regular, then the result follows by applying the Grothendieck-Hodge index theorem to an alternation of $\Gamma _{f^{n_i}}$, using part 1) of Theorem \ref{TheoremRelativeDynamicalDegrees}. In the general case we proceed as follows.

Let $\tau _1:X_1\rightarrow X$ be an alteration of degree $d_1$ so that $X_1$ is smooth and the composition $\pi _1=\pi \circ \tau _1:X_1\rightarrow Y$ is a regular morphism. Define $f_1=\tau _1^*f:X_1\rightarrow X_1$. By definition, we have
\begin{eqnarray*}
\lambda _p(f|\pi )=\frac{1}{d_1}\lambda _p(f_1|\pi _1)
\end{eqnarray*} 
From the proof of Lemma \ref{LemmaRDD2}, it can be easily shown that 
\begin{eqnarray*}
\lambda _p(f_1|\pi _1)=\lim _{n\rightarrow\infty}<(f_1^{n})^*\tau _1^*(\omega _X^p)\wedge \pi _1^*(\omega _Y^l),\tau _1^*(\omega _X^{k-l-p})>^{1/n}. 
\end{eqnarray*}

Now we prove the log-concavity of relative dynamical degrees. To this end, it suffices to show that 
\begin{eqnarray*}
\deg _{p+1}(f|\pi )\deg _{p-1}(f|\pi )\leq \deg _p(f|\pi )^2. 
\end{eqnarray*}
Choose $n_i$ to be a positive integer for which the graph $\Gamma _{f}^{n_i}$ is irreducible. Let $\Gamma\rightarrow \Gamma _{f^{n_i}}$ be an alteration of degree $d$ of the graph of $f^{n_i}$, and let $\pi _1,\pi _2:\Gamma \rightarrow X$ be the two projections. We write $\Gamma _{f_1^{n_i}}=\sum _{q\in I_{n_i}}G_q$ for some finite set $I_{n_i}$, where $G_q$ are irreducible components. For each $q\in I_{n_i}$, there is an alteration $\widetilde{G_q}\rightarrow G_q$ of degree $a_q$ which is equipped with a regular morphism $\alpha _q:\widetilde{G_q}\rightarrow \Gamma$. Denote by $p_{1,q},p_{2,q}:\widetilde{\Gamma _q}\rightarrow X$ the natural projections.  By definition
\begin{eqnarray*}
&&<(f_1^{n})^*\tau _1^*(\omega _X^p)\wedge \pi _1^*(\omega _Y^l),\tau _1^*(\omega _X^{k-l-p})>\\
&=&\sum _{q\in I_{n_i}}\frac{1}{a_q}p_{1,q}^*. \pi _1^*(\omega _Y^l).p_{1,q}^*\tau _1^*(\omega _X^{k-l-p}).p_{2,q}^*\tau _1^*(\omega _X^p).
\end{eqnarray*}
We now show that each summand in the RHS of the above equality is a positive multiple of the following
\begin{eqnarray*}
p_{1}^*. \pi ^*(\omega _Y^l).p_{1}^*(\omega _X^{k-l-p}).p_{2}^*(\omega _X^p).
\end{eqnarray*}
Moreover, the multiples depend only on the degrees of the morphisms $\widetilde{G_q}\rightarrow \Gamma$, and are independent of $p=\{0,\ldots ,k-l\}$.  This claim can be easily seen by chasing various commutative diagrams of surjective morphisms, see also the proof of Lemma \ref{LemmaWedgeIntersectionCompatibility} below. Since the last number is an intersection of nef classes on the irreducible smooth variety $\Gamma$, we can approximate them by $ \mathbb{Q}$-ample classes, and use the usual Grothendieck-Hodge index theorem to prove the log-concavity.
\end{proof}
Hence, part 3) of Theorem \ref{TheoremRelativeDynamicalDegrees} is proven. Now we proceed to proving part 4) of Theorem \ref{TheoremRelativeDynamicalDegrees}, which also finishes the proof of the Theorem, because part 5) is standard. Provided that we can prove results from Section 3 in \cite{dinh-nguyen-truong1} (c.f. also Section 4 in \cite{dinh-nguyen}), part 4) will follow easily. We can assume that $X$ is a smooth projective variety and $\pi :X\rightarrow Y$ is a surjective morphism.  Let us first discuss the idea for the proof of part 4). We note that by arguments as in the proofs of Lemmas \ref{LemmaChowMovingForProduct} and  \ref{LemmaSemiconjugacyInvariant}, the inequality $a\lambda _p(f) \leq \max _{0\leq j\leq l,~0\leq p-j\leq k-l}\lambda _j(g)\lambda _{p-j}(f|\pi )$ follows provided we can prove Lemma 4.3 in \cite{dinh-nguyen}, while the reverse inequality $a\lambda _p(f) \geq \max _{0\leq j\leq l,~0\leq p-j\leq k-l}\lambda _j(g)\lambda _{p-j}(f|\pi )$ follows provided we can prove Lemma 4.5 in \cite{dinh-nguyen}. (Remark that in \cite{dinh-nguyen-truong1} we simplified the treatment of part 4) in \cite{dinh-nguyen} by using an analogue of Lemma \ref{LemmaChowMovingForProduct}, so that we do not need to consider a surjective morphism $\Pi =(\pi ,v):X\rightarrow Y\times \mathbb{P}^{k-l}$. )

Lemma 4.3 in \cite{dinh-nguyen}  (see also \cite{dinh-nguyen-truong1}) was proven, in the case $f$ is a dominant rational map, by using the following (which is in lines 2-3 in the proof of that lemma): $(f^n)^*(\pi ^*(\omega _Y^{p-q})\wedge \omega _X^{q})\leq (f^n)^*(\pi ^*(\omega _Y^{p-q}))\stackrel{o}{\wedge}(f^n)^*(\omega _X^{q})$. In our case, this would amount to proving that for a general correspondence $a^n(f^n)^*(\pi ^*(\omega _Y^{p-q})\wedge \omega _X^{q})\leq (f^n)^*(\pi ^*(\omega _Y^{p-q}))\stackrel{o}{\wedge}(f^n)^*(\omega _X^{q})$. While this holds for $a=1$, there is no reason why it should be true for $a>1$.  

Lemma 4.5 in \cite{dinh-nguyen} (see also \cite{dinh-nguyen-truong1}) was proven, in the case $f$ is a dominant rational map, using  the following (which is in lines 7-9 in the proof of that lemma): $(f^n)^*(\pi ^*(\omega _Y^{p-q})\wedge \omega _X^{q})\geq (f^n)^*(\pi ^*(\omega _Y^{p-q}))\stackrel{o}{\wedge}(f^n)^*(\omega _X^{q})$. In our case, this would amount to proving that for a general correspondence $a^n(f^n)^*(\pi ^*(\omega _Y^{p-q})\wedge \omega _X^{q})\geq (f^n)^*(\pi ^*(\omega _Y^{p-q}))\stackrel{o}{\wedge}(f^n)^*(\omega _X^{q})$. This seems to be false, even if we choose $a$ as large as we like, by considering as simple examples as $f=f_1+f_2$, where $f_1,f_2$ are two dominant rational maps. 

To overcome these difficulties when working with general correspondences, we use the observation that both Lemmas 4.3 and 4.5 in \cite{dinh-nguyen} follow if we can show the following: If $f$ is a dominant rational map (hence so is $g$), then the two numbers $a^n<(f^n)^*(\pi ^*(\omega _Y^{p-q})\wedge \omega _X^{q}),\pi ^*(\omega _Y^{l-p+p_0})\wedge \omega _X^{k-l-p_0}>$ and $(f^n)^*\pi ^{*}(\omega _Y^{p-q})\wedge \pi ^*(\omega _Y^{l-p+p_0})\stackrel{o}{\wedge}(f^n)^*(\omega _X^q)\wedge \omega _X^{k-l-p_0}$ are compatible. Using the arguments in the proof of Lemma \ref{LemmaLogConcavityRelativeDynamicalDegrees} to each irreducible component of $\Gamma _{g^n}$, we are able to achieve this estimate under the assumption that $g$ is a multiple of a rational map.

\begin{lemma}
Let $X,Y$ be irreducible smooth projective varieties of corresponding dimensions $k\geq l$ , and $\pi :(X,f)\rightarrow (Y,g)$ a semi-conjugacy where $\pi :X\rightarrow Y$ is a surjective regular morphism. Assume that $g=ag'$ where $g':Y\rightarrow Y$ is a dominant rational map. Let $\omega _X$ be an ample divisor on $X$ and $\omega _Y$ an ample divisor on $Y$. Then, there are two positive constants $C_1,C_2$  such that for all $n\in \mathbb{N}$ and $p,q,r\in \mathbb{N}$ we have
\begin{eqnarray*}
&&C_1 (f^n)^*\pi ^{*}(\omega _Y^{p})\wedge \pi ^*(\omega _Y^{r})\stackrel{o}{\wedge}(f^n)^*(\omega _X^q)\wedge \omega _X^{k-p-q-r}\\
&\leq& a^n<(f^n)^*(\pi ^*(\omega _Y^p)\wedge \omega _X^q),\pi ^*(\omega _Y^r)\wedge \omega _X^{k-p-q-r}>\\
&\leq& C_2(f^n)^*\pi ^{*}(\omega _Y^{p})\wedge \pi ^*(\omega _Y^{r})\stackrel{o}{\wedge}(f^n)^*(\omega _X^q)\wedge \omega _X^{k-p-q-r}.
\end{eqnarray*}
\label{LemmaWedgeIntersectionCompatibility}\end{lemma}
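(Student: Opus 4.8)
The plan is to combine the component-by-component argument from the proof of Lemma~\ref{LemmaLogConcavityRelativeDynamicalDegrees} with the cone-moving estimates of Section~2, the decisive point being that the hypothesis $g=ag'$ forces $\Gamma_{g^n}=a^n[H_n]$ with $H_n=\Gamma_{(g')^n}$ \emph{irreducible} for every $n$. Write $\Gamma_{f^n}=\sum_{j\in I_n}c_j[G_j]$ with the $G_j$ distinct irreducible subvarieties of $X\times X$. The relation $\pi\circ f^n=g^n\circ\pi$ shows that $(\pi\times\pi)(G_j)\subseteq H_n$ for every $j$. Using de Jong's alterations I would fix an alteration $\sigma_n\colon\widetilde H_n\to H_n$ and, for each $j$, an alteration $\tau_j\colon\widetilde G_j\to G_j$ of degree $d_j$ chosen so that the induced rational map $\widetilde G_j\dashrightarrow\widetilde H_n$ becomes a morphism $q_j\colon\widetilde G_j\to\widetilde H_n$; writing $p_{1,j},p_{2,j}\colon\widetilde G_j\to X$ and $s_1,s_2\colon\widetilde H_n\to Y$ for the two pairs of projections, this produces commuting squares $\pi\circ p_{i,j}=s_i\circ q_j$ for $i=1,2$.

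Next I would rewrite the middle quantity via $(f^n)^*(\alpha)=\sum_j\tfrac{c_j}{d_j}(p_{1,j})_*p_{2,j}^*(\alpha)$ and the projection formula, turning it into a sum over $j$ of intersection numbers, on the smooth varieties $\widetilde G_j$, of pullbacks of $\omega_X$ (via $p_{1,j},p_{2,j}$) and of $\omega_Y$ (via $s_1\circ q_j$ and $s_2\circ q_j$), all of which are nef; in particular $p_{i,j}^*\pi^*=q_j^*s_i^*$ and the intersection product commutes on each $\widetilde G_j$. The two outer quantities give the \emph{same} sum, except that there the two ``$(f^n)^*$''-factors are first strict-transformed separately and only then strict-intersected. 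Comparing ``product first'' with ``strict transforms first'' on a fixed smooth $\widetilde G_j$ is exactly the situation of Lemmas~\ref{LemmaChowMovingForProduct} and~\ref{LemmaDegreeOfIntersections}: one moves the cone over $q_j^*s_2^*(\omega_Y^p)$ (respectively over $p_{2,j}^*(\omega_X^q)$) by a $\mathbb{P}^1$-family of automorphisms of the ambient projective space so that all intersections become proper, and Roberts' Main Lemma bounds the degrees of the moved cycles by a fixed multiple of the original ones, yielding a two-sided term-by-term estimate. The explicit factor $a^n$ in the statement is the normalisation that makes the two sides scale the same way under $\pi\circ f^n=g^n\circ\pi$: it is the multiplicity of $H_n$ in $\Gamma_{g^n}$, and by Lemma~\ref{LemmaSemiconjugacyConstantProperty} applied to $\pi\colon(X,f^n)\to(Y,a^n(g')^n)$ it equals $\sharp f^n(x)$ for generic $x$; hence it enters both sides with the identical weight and cancels in the two-sided bound. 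The reducibility of $\Gamma_{f^n}$ is handled, as in Lemma~\ref{LemmaLogConcavityRelativeDynamicalDegrees}, by carrying out this matching component by component and then summing over $j\in I_n$.

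The hard part is to make $C_1,C_2$ independent of $n$ (and of $p,q,r$). The combinatorial data $|I_n|$, the $c_j$, the $d_j$ and the degrees $\deg(\widetilde G_j\to\widetilde H_n)$ all grow with $n$, but they occur with the \emph{same} weights on the two sides and therefore cancel; what must be controlled uniformly is the multiplicative defect introduced by the cone-moving step on each $\widetilde G_j$. This is exactly what the degree estimates of Section~2 provide: the bound in Lemma~\ref{LemmaDegreeOfIntersections} and the identity $\deg(C_L(Z))=\deg(Z)$ used in the proof of Lemma~\ref{LemmaChowMovingForProduct} depend only on $\dim X$, $\dim Y$, $\deg(\pi)$ and the degrees of $X$ and $Y$ in the fixed embeddings, none of which depends on $n$. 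In effect the $Y$-direction of the estimate is governed by the single rational map $g'$, through its alteration $\widetilde H_n$, so the comparison reduces to the rational-map product-formula estimate (the case $a=1$), for which the constants are uniform; this is precisely why the argument needs $g$ to be a multiple of a rational map. Collecting the term-by-term two-sided bounds and summing over $j\in I_n$ produces the required $C_1$ and $C_2$.
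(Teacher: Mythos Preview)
Your architecture matches the paper's: decompose $\Gamma_{f^n}$ into irreducible pieces $\Gamma$, choose alterations $\widetilde\Gamma\to\Gamma$ equipped with a morphism $\gamma$ down to a fixed alteration $\widetilde{H_n}\to H_n=\Gamma_{(g')^n}$, compute on each $\widetilde\Gamma$, and sum. But the crucial step is not articulated correctly, and your description of where the cone-moving is applied is off.

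The decisive property is not merely that $H_n$ is irreducible, but that the \emph{first projection} $H_n\to Y$ is \emph{birational} (because $H_n$ is the graph of the rational map $(g')^n$). On $\widetilde{H_n}$ this yields the exact identity $\tfrac{1}{d}\,p_1^{\,o}(p_1)_*\bigl(p_1^*\omega_Y^r\wedge p_2^*\omega_Y^p\bigr)=p_1^*\omega_Y^r\wedge p_2^*\omega_Y^p$, which is what the paper uses to factor the $Y$-part of the intersection on $\widetilde\Gamma$ as the pullback via $p_1'=\pi\circ p_{1,j}$ of the single class $G_n^*(\omega_Y^p)\wedge\omega_Y^r$ on $Y$. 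Summing over components then gives, up to constants depending only on $X,Y$, the relation
\[
\bigl\langle (f^n)^*(\pi^*\omega_Y^p\wedge\omega_X^q),\,\pi^*\omega_Y^r\wedge\omega_X^{k-p-q-r}\bigr\rangle
\ \sim\ \pi^*\bigl[G_n^*(\omega_Y^p)\wedge\omega_Y^r\bigr]\wedge (f^n)^*(\omega_X^q)\wedge\omega_X^{k-p-q-r}.
\]
The factor $a^n$ then enters \emph{algebraically}, from $(g^n)^*=a^nG_n^*$ together with $(f^n)^*\pi^*=\pi^*(g^n)^*$; it is not a cancellation of matching weights on the two sides, and Lemma~\ref{LemmaSemiconjugacyConstantProperty} is not what produces it. Finally, Lemma~\ref{LemmaChowMovingForProduct} is applied on $X$ (not on the individual $\widetilde G_j$) to pass between $\pi^*[G_n^*(\omega_Y^p)\wedge\omega_Y^r]\wedge(\cdot)$ and $\pi^*G_n^*(\omega_Y^p)\wedge\pi^*\omega_Y^r\stackrel{o}{\wedge}(\cdot)$.

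Your sketch ``compare product-first with strict-transforms-first on each $\widetilde G_j$ via cone-moving'' does not by itself give uniform constants: without the birationality identity above you have no way to collapse the $Y$-direction to a class on $Y$ independent of the component, and a naive term-by-term cone estimate would introduce constants depending on the degrees of the $\widetilde G_j$, which grow with $n$. Make the birationality of $H_n\to Y$ explicit; once that identity is in place, the rest of your outline is exactly the paper's proof.
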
 
\begin{proof}
If $f$ is a dominant rational map, then the arguments in the proofs of Lemmas 4.3 and 4.5 in \cite{dinh-nguyen} applies almost verbatim, pending the simplifications made in \cite{dinh-nguyen-truong1}. In the general case, we argue as follows. 

Fix a number  $n$, and let $G_n\subset Y\times Y$ be the graph of the dominant rational map $(g')^n:Y\rightarrow Y$. Let $\Gamma _{f^{n}}=\sum _{i}\Gamma _i$ where $\Gamma _i$ are irreducible varieties. Let $\Gamma$ be one among the $\Gamma _i$'s. Since $\pi :(X,f)\rightarrow (Y,g)$ is a semi-conjugacy, it follows that the image of $\Gamma$ under the regular morphism $\pi \times \pi :X\times X\rightarrow Y\times Y$ must be  $G_n$. We let $\widetilde{G_n}\rightarrow G_n$ be an alteration of degree say $d$, and $p_1,p_2:\widetilde{G_n}\rightarrow Y$ be the two natural projections. We can choose an alteration $\widetilde{\Gamma}\rightarrow \Gamma$ of degree say $d'$ which is equipped with a regular surjective morphism $\gamma :\widetilde{\Gamma}\rightarrow \widetilde{G_n}$ so that the two compositions $\widetilde{\Gamma}\rightarrow \Gamma \rightarrow G_n$ and $\widetilde{\Gamma}\rightarrow \widetilde{G_n}\rightarrow G_n$ coincide. Let $p_1',p_2':\widetilde{\Gamma}\rightarrow X$ be the two natural projections.   

Then
\begin{eqnarray*}
&&<\Gamma ^*(\pi ^*(\omega _Y^p)\wedge \omega _X^q),\pi ^*(\omega _Y^r)\wedge \omega _X^{k-p-q-r}>\\
&=&\frac{1}{d'}(p_1')^*\pi ^*(\omega _Y^r)\wedge (p_1')^*(\omega _X^{k-p-q-r})\wedge (p_2')^*\pi ^*(\omega _Y^p)\wedge (p_2')^*(\omega _X^q)\\
&=&\frac{1}{d'}\gamma ^*(p_1^*(\omega _Y^r)\wedge p_2^*(\omega _Y^p))\wedge (p_1')^*(\omega _X^{k-p-q-r})\wedge (p_2')^*(\omega _X^q)\\
&\sim&\frac{1}{d'}\frac{1}{d}\gamma ^*p_1^*(p_1)_*(p_1^*(\omega _Y^r)\wedge p_2^*(\omega _Y^p))\wedge (p_1')^*(\omega _X^{k-p-q-r})\wedge (p_2')^*(\omega _X^q)\\
&\sim&(p_1')^*\pi ^*[\frac{1}{d}(p_1)_*p_2^*(\omega _Y^p)\wedge \omega _Y^r]\wedge (p_1')^*(\omega _X^{k-p-q-r})\wedge [\frac{1}{d'}(p_2')^*(\omega _X^q)]\\
&\sim&\pi ^*[G_n^*(\omega _Y^p)\wedge \omega _Y^{r}]\wedge \Gamma ^*(\omega _X^{q})\wedge \omega _X^{k-p-q-r}.
\end{eqnarray*}
Here $\sim$ means that the two numbers involved are positive multiples of each other, where the multiples are constants bounded in terms of $X$ and $Y$ only. We used in the fourth line of the above expression that $$\frac{1}{d}p_1^o(p_1)_*(p_1^*(\omega _Y^r)\wedge p_2^*(\omega _Y^p))=p_1^*(\omega _Y^r)\wedge p_2^*(\omega _Y^p),$$
(here $p_1^o$ on the far left means the strict pullback by $p_1$) which follows from the fact that the projection to the first factor $G_n\rightarrow Y$ is a birational morphism, since $G_n$ is the graph of a (dominant) rational map on $Y$. 

If we take the sum all over $\Gamma$'s, the proof of the lemma is completed, by using Lemma \ref{LemmaChowMovingForProduct} and the fact that $(g^n)^*=a^nG_n^*$. 
\end{proof}

\subsection{Proof of Theorem \ref{TheoremWeakerProductFormula}}

We observe that it is enough to prove Proposition 4.5 in \cite{dinh-nguyen}. Checking the proof of that proposition carefully, we see that it suffices to show the following: for every $a>\lambda _0(g)$  
\begin{eqnarray*}
&&a^n<(f^n)^*(\pi ^*(\omega _Y^j)\wedge \omega _X^{p-j}),\pi ^*(\omega _Y^{l-j})\wedge \omega _X^{k-p-q+j}>\\
&\geq& \pi ^*[(g^n)^*(\omega _Y^{j})\wedge \omega _Y^{l-j}]\stackrel{o}{\wedge}(f^n)^*(\omega _X^{p-j})\wedge \omega _X^{k-l-p+j},
\end{eqnarray*}
provided that $n$ is large enough and $g^n$ is irreducible. 

To this end, we follow the proof of Lemma \ref{LemmaWedgeIntersectionCompatibility}. Let $n$ be an integer for which $g^n$ is irreducible, and write $g^n=a_nG_n$, where $G_n\subset Y\times Y$ is irreducible.  Let $\Gamma _{f^{n}}=\sum _{i}\Gamma _i$ where $\Gamma _i$ are irreducible varieties. Let $\Gamma$ be one among the $\Gamma _i$'s. Since $\pi :(X,f)\rightarrow (Y,g)$ is a semi-conjugacy, it follows that the image of $\Gamma$ under the regular morphism $\pi \times \pi :X\times X\rightarrow Y\times Y$ must be  $G_n$. We let $\widetilde{G_n}\rightarrow G_n$ be an alteration of degree say $d$, and $p_1,p_2:\widetilde{G_n}\rightarrow Y$ be the two natural projections. We can choose an alteration $\widetilde{\Gamma}\rightarrow \Gamma$ of degree say $d'$ which is equipped with a regular surjective morphism $\gamma :\widetilde{\Gamma}\rightarrow \widetilde{G_n}$ so that the two compositions $\widetilde{\Gamma}\rightarrow \Gamma \rightarrow G_n$ and $\widetilde{\Gamma}\rightarrow \widetilde{G_n}\rightarrow G_n$ coincide. Let $p_1',p_2':\widetilde{\Gamma}\rightarrow X$ be the two natural projections. Let $b_n$ be the degree of the morphism $G_n\rightarrow Y$.  

Then
\begin{eqnarray*}
&&<\Gamma ^*(\pi ^*(\omega _Y^j)\wedge \omega _X^{p-j}),\pi ^*(\omega _Y^{l-j})\wedge \omega _X^{k-l-p+j}>\\
&=&\frac{1}{d'}(p_1')^*\pi ^*(\omega _Y^{l-j})\wedge (p_1')^*(\omega _X^{k-l-p+j})\wedge (p_2')^*\pi ^*(\omega _Y^j)\wedge (p_2')^*(\omega _X^{p-j})\\
&=&\frac{1}{d'}\gamma ^*(p_1^*(\omega _Y^{l-j})\wedge p_2^*(\omega _Y^j))\wedge (p_1')^*(\omega _X^{k-l-p+j})\wedge (p_2')^*(\omega _X^{p-j})\\
&=&\frac{1}{d'}\frac{1}{d}\frac{1}{b_n}\gamma ^*p_1^*(p_1)_*(p_1^*(\omega _Y^{l-j})\wedge p_2^*(\omega _Y^j))\wedge (p_1')^*(\omega _X^{k-l-p+j})\wedge (p_2')^*(\omega _X^{p-j})\\
&=&\frac{1}{b_n}(p_1')^*\pi ^*[\frac{1}{d}(p_1)_*p_2^*(\omega _Y^{j})\wedge \omega _Y^{l-j}]\wedge (p_1')^*(\omega _X^{k-l-p+j})\wedge [\frac{1}{d'}(p_2')^*(\omega _X^{p-j})]\\
&=&\frac{1}{b_n}\pi ^*[G_n^*(\omega _Y^j)\wedge \omega _Y^{l-j}]\wedge \Gamma ^*(\omega _X^{p-j})\wedge \omega _X^{k-l-p+j}.
\end{eqnarray*}
In the above, we used that if $\tau :Y'\rightarrow Y$ is a generically finite surjective morphism and $\alpha$ is a finite union of points in $Y'$ then $\tau ^*\tau _*(\alpha )$ and $\deg (\tau )\alpha $ have the same class in $N^*(Y')$.  This is then applied to $Y'=\widetilde{G_n}$, $\tau =p_1$, and $\alpha =p_1^*(\omega _Y^{l-j})\wedge p_2^*(\omega _Y^j)$ (recall here that $\dim (Y)=l$). 

Taking  the sum all over $\Gamma$'s, and by observing that $(g^n)^*=a_nG_n^*$ and $\lim _{n\rightarrow\infty}(a_nb_n)^{1/n}= \lambda _0(g)$, we obtain the inequality 
\begin{eqnarray*}
\lambda _0(g)\lambda _p(f) \geq \max _{0\leq j\leq l,~0\leq p-j\leq k-l}\lambda _j(g)\lambda _{p-j}(f|\pi ),
\end{eqnarray*}
for all $p=0,\ldots ,\dim (X)$. 

To finish the proof of the theorem, we need to show that if $c>0$ is a constant such that 
\begin{eqnarray*}
c\lambda _p(f) \geq \max _{0\leq j\leq l,~0\leq p-j\leq k-l}\lambda _j(g)\lambda _{p-j}(f|\pi ),
\end{eqnarray*}
for all $p=0,\ldots ,\dim (X)$, then $c\geq \lambda _0(g)$. In fact, if this is the case, then (by choosing $p=0$) we must have $c\lambda _0(f)\geq \lambda _0(g)\lambda _0(f|\pi )$. From the proof of Theorem \ref{TheoremPrimitiveCorrespondence}, we have that $\lambda _0(f)=\lambda _0(f|\pi )$, and hence $c\geq \lambda _0(g)$ as wanted. 

\subsection{Proof of Theorem \ref{TheoremFurtherProperty}}
Let $\tau _2:Z\rightarrow X_2$ be an alteration (so $Z$ is smooth and $\tau _2$ is a generically finite regular morphism) so that  the composition maps $\tau _1=\varphi \circ \tau _2:Z\rightarrow X_1$, $\pi _2'=\pi _2\circ \tau _2:Z\rightarrow Y_2$ and $\pi _1'=\pi _1\circ  \tau _1:Z\rightarrow Y_1$ are all regular. 

1) Let us define two correspondences over $Z$: $h_1=\tau _1^*(f_1)$ and $h_2=\tau _2^*(f_2)$. We have two semi-conjugacies $(Z,h_1)\rightarrow (X_1,\deg (\tau _1)f_1)$ and $(Z,h_2)\rightarrow (X_2,\deg (\tau _2)f_2)$. By the last part of the proof of Lemma \ref{LemmaBirationalInvariant}, we have that $\lambda _p(h_1|\pi _1')=\lambda _p(h_1|\pi _2')$ for all $p$. Hence, by definition and the fact that $\deg (\tau _1)=\deg (\tau _2)\deg (\varphi )$ since $\tau _1=\varphi \tau _2$, we obtain
\begin{eqnarray*}
\lambda _p(f_1|\pi _1)&=&\frac{1}{\deg (\tau _1)}\lambda _p(h_1|\pi _1')=\frac{1}{\deg (\tau _2)\deg (\varphi )}\lambda _p(h_1|\pi _2'),\\
\lambda _p(f_2|\pi _2)&=&\frac{1}{\deg (\tau _2)}\lambda _p(h_2|\pi _2').
\end{eqnarray*}

First we show that $\Gamma _{h_1}-\Gamma _{h_2}$ is an effective variety, that is all irreducible components of $\Gamma _{h_2}$ are contained in $\Gamma _{h_1}$ and the multiplicity of that component in $\Gamma _{h_2}$ is bounded from above by the corresponding in $\Gamma _{h_1}$. To this end, by defining $f_2'=\varphi _2^*(f_1):X_2\rightarrow X_2$ we observe that $h_1=\tau _2^*\varphi ^*(f_1)=\tau _2^*(f_2')$, therefore to show that $\Gamma _{h_1}-\Gamma _{h_2}$ is an effective variety it suffices to show that $\Gamma _{f_2'}-\Gamma _{f_2}$ is effective. This follows easily from the fact that for a generic $x_2\in X_2$:
\begin{eqnarray*}
\varphi f_2(x_2)&=&f_1\varphi (x_2),\\
f_2'(x_2)&=&\varphi ^{-1}f_1\varphi (x_2).
\end{eqnarray*}

From this, it readily follows that $\Gamma _{h_1^n}-\Gamma _{h_2^n}$ is an effective variety for all $n\in \mathbb{N}$. Therefore, we obtain by definition of relative dynamical degrees that $\lambda _p(h_1|\pi _2')\geq \lambda _p(h_2|\pi _2')$ for all $p$. The above consideration then implies that $\deg (\varphi )\lambda _p(f_1|\pi _1)\geq \lambda _p(f_2|\pi _2)$  for all $p$. 

Now we finish the proof of 1). Applying the above inequality to $f_1^n$ and $f_2^n$ instead of $f_1$ and $f_2$, we obtain $\deg (\varphi )\lambda _p(f_1^n|\pi _1)\geq \lambda _p(f_2^n|\pi _2)$ for all $n\in \mathbb{N}$. From this and the fact (following from part 1) of Theorem \ref{TheoremRelativeDynamicalDegrees}) that $\lambda _p(f_1^n|\pi _1)=\lambda _p(f_1|\pi _1)^n$ and $\lambda _p(f_2^n|\pi _2)=\lambda _p(f_2|\pi _2)^n$ for all $n\in \mathbb{N}$, we have $\lambda _p(f_1|\pi _1)\geq \lambda _p(f_2|\pi _2)$ for all $p$.  

 2)  Assume that condition a) is satisfied, that is $X_1$ is smooth and $\pi _1$ is regular. We consider the correspondence $h=\tau _2^*(f_2)$. By Lemma \ref{LemmaSemiconjugacy1}, we have a semi-conjugacy $(Z,h=\tau _2^*(f_2))\rightarrow (X_2,\deg (\tau _2)f_2)$, which together with the given semi-conjugacy $(X_2,f_2)\rightarrow (X_1,f_1)$ gives also a semi-conjugacy $(Z,h)\rightarrow (X_1,\deg (\tau _2)f_1)$. By definition, we have $\lambda _p(h|\pi _2')=\deg (\tau _2)\lambda _p(f_2|\pi _2)$. Under condition a), Lemma \ref{LemmaSemiconjugacyInvariant} can be applied, and we obtain $\lambda _p(h|\pi _1')=\deg (\tau _2)\lambda _p(f_1|\pi _1)$. The argument at the end of the proof of Lemma \ref{LemmaBirationalInvariant} helps to show that $\lambda _p(h|\pi _1')=\lambda _p(h|\pi _2')$, and the latter gives that $\lambda _p(f_1|\pi _1)=\lambda _p(f_2|\pi _2)$ as wanted.

Assume that condition b) is satisfied. Then we can find a resolution $\tau :X_1'\rightarrow X_1$ such that $X_1'$ is smooth and the induced map $\pi _1\circ \tau :X_1'\rightarrow Y_1$ is regular. Since $\tau $ is birational, the semi-conjugacy $(X_2,f_2)\rightarrow (X_1,f_1)$ lifts to a semi-conjugacy $(X_2,f_2)\rightarrow (X_1',f_1'=\tau ^*(f_1))$. By Lemma \ref{LemmaBirationalInvariant}, $\lambda _p(f_1'|\pi _1\circ \tau )=\lambda _p(f_1|\pi )$. Hence this case is reduced to the previous case, and the proof is completed.

\subsection{Proof of Theorem \ref{TheoremTriangleInequality}}

By using pullback to an alteration $X_1$ of $X$ if necessary, we may assume that $X$ is smooth.  Since $f_1,f_2:X\rightarrow X$ are commutative, we have 
\begin{eqnarray*}
(f_1+f_2)^n=\sum _{i=0}^nC(n,i)f_1^if_2^{n-i},
\end{eqnarray*}
where $C(n,i)$ are the binomial coefficients. In the proof of Theorem \ref{TheoremRDD4}, by using $f_1^i$, $f_2^{n-i}$ and their composition in the place of $f^n$, $f^m$ and $f^{n+m}$, we find that there is a constant $A>0$ depending only on $X$ and $Y$ and the map $\pi$ such that in the notation of Subsection 5.1
\begin{eqnarray*}
\deg _p((f_1+f_2)^n)\leq A\sum _{i=0}^nC(n,i)\deg _p(f_1^i)\deg _p(f_2^{n-i}). 
\end{eqnarray*}
This, together with part 1) of Theorem \ref{TheoremRelativeDynamicalDegrees}, concludes the proof of Theorem \ref{TheoremTriangleInequality}. 

\section{Extensions and applications}

In this section we collect some extensions and applications. 

\subsection{Correspondences over non-projective varieties}

Let $K$ be an algebraic closed field, $X$ an (not necessarily projective) irreducible variety and $f:X\rightarrow X$ a correspondence. By de Jong's theorem, there is a generically finite rational map $\tau :X'\rightarrow X$ for which $X'$ is a smooth irreducible projective variety. We then define, as in the proof of Theorem \ref{TheoremRelativeDynamicalDegrees} 
\begin{eqnarray*}
\lambda _p(f)=\frac{1}{\deg (\tau )}\lambda _p(\tau ^*(f)).
\end{eqnarray*}
The proof of Theorem \ref{TheoremRelativeDynamicalDegrees} shows that this is well-defined. 

If $X$ is not projective but $Y$ is projective, we can define relative dynamical degrees similarly. 

\subsection{Correspondences over reducible varieties}

Let $K$ be an algebraic closed field, and $X,Y$  (reducible) projective varieties. Let $f:X\rightarrow X$ and $g:Y\rightarrow Y$ be dominant correspondences. Assume that $\pi :X\rightarrow Y$ is a dominant rational map such that $\pi \circ f^n=g^n\circ \pi $ for all $n\in \mathbb{N}$. (Recall from Section 3 that both definitions of iterates $f^n$ and semi-conjugacies for correspondences over reducible varieties are quite delicate.) Then relative dynamical degrees can be defined and satisfy all the properties in Theorem \ref{TheoremRelativeDynamicalDegrees}. The reason for this is that in proving these results, the property we need is that we have an equality between strict transforms $(f^n)_0\circ (f^m)_0=(f^{n+m})_0$ outside certain bad sets, and this is certainly true for the general case considered here even though the iterates $f^n$ are more involved to define. 

Let us elaborate on how to proceed. We discuss the case of dynamical degrees only, the general case of relative dynamical degrees is similar. Let $X=X_1\cup \ldots X_m$ be  a reducible projective variety, where $X_i$'s are distinct irreducible components of it. Let $f:X\rightarrow X$ be a dominant rational morphism. We claim that there are alterations $\tau _i:Z_i\rightarrow X_i$ for $i=1,\ldots ,m$, where $Z_i$ are distinct irreducible smooth varieties. In fact, we choose first for each  $i$ an alteration $Z_i'\rightarrow X_i$. If the $Z_i'$ are pairwise distinct, we are done. Otherwise, if $Z_i'=Z_j'$, we replace any one of these by the blowup at several points while keeping the other one unchanged. This procedure is done until the $Z_i'$ are pairwise distinct.  
 
For each component $f_{i,j}:X_i\rightarrow X_j$ of $f$, we consider the pullback $g_{i,j}:Z_i\rightarrow Z_j$, where the graph of $g_{i,j}$ is the pullback of $\Gamma _{f_{i,j}}$ by the generically finite morphism $\tau _i\times \tau _j:Z_i\times Z_j\rightarrow X_i\times X_j$. Define $d_{i,j}=$ the degree of the map $\tau _i\times \tau _j$ $=$ $d_id_j$, where $d_i$ is the degree of the morphism $\tau _i$. By Lemma \ref{LemmaSemiconjugacy1}, we have a semi-conjugacy of pairs $\tau _{i,j}=(\tau _i,\tau _j):(Z_i,Z_j,g_{i,j})\rightarrow (X_i,X_j,d_{i,j}f_{i,j})$. We define $d=\prod _{(i,j)\in I}d_{i,j}$, and 
\begin{eqnarray*}
g=\sum _{(i,j)}\frac{d}{d_{i,j}}g_{i,j}. 
\end{eqnarray*}
Let $Z=Z_1\cup \ldots \cup Z_m$, then $g$ is a correspondence on $Z$ and it is dominant since $f$ is dominant. Since $Z_i$ are distinct and $X_i$ are distinct, we see by definition that $f_{i,j}$ and $f_{k,l}$ can be composed if and only if $g_{i,j}$ and $g_{k,l}$ can be composed.  From this, it can be checked that we have a semi-conjugacy $\tau :(Z,g)\rightarrow (X,df)$ which moreover satisfies $\tau \circ g^n=f^n\circ \tau $, where $\tau =(\tau _1,\ldots ,\tau _m):(Z_1,\ldots ,Z_m)\rightarrow (X_1,\ldots ,X_m)$. Then, provided that we can define $\lambda _p(g)$, we then can assign
\begin{eqnarray*}
\lambda _p(f):=\frac{1}{d}\lambda _p(g).
\end{eqnarray*}
Now we indicate how to define $\lambda _p(g)$ and to show that the above definition is independent of the choice of $Z$. Let $\omega _{i}$ be an ample divisor on $Z_i$. We then define
\begin{eqnarray*}
\deg _p(g)=\sum _{(i,j)\in I}f_{i,j}^*(\omega _j^p)\wedge \omega _{i}^{k-p}.
\end{eqnarray*}  
Then it can be proven as before that the following limit exists
\begin{eqnarray*}
\lim _{n\rightarrow\infty}(\deg _p(g^n))^{1/n},
\end{eqnarray*}
and we assign $\lambda _p(g)$ to be this limit. 

Now we show that the definition of $\lambda _p(f)$, as defined above, is independent of the choice of $Z$. Let be given another $Z'=Z_1'\cup \ldots Z_m'$ together with alterations $\tau _i':Z_i'\rightarrow X_i$ and pullbacks $g_{i,j}=(\tau _i',\tau _j')^*(f_{i,j})$. Let $d_{i,j}'$ be the degree of the morphism $\tau _i'\times \tau _j':Z_i'\times Z_j'\rightarrow X_i\times X_j$, and $d'=\prod _{(i,j)\in I}d_{i,j}'$. Then the correspondence $g':Z'\rightarrow Z'$ is
\begin{eqnarray*}
g'=\sum _{(i,j)\in I} \frac{d'}{d_{i,j}'}g_{i,j}'.
\end{eqnarray*}

We can find another $\widetilde{Z}=(\widetilde{Z_1},\ldots ,\widetilde{Z_m})$ together with alterations $\varphi _i:\widetilde{Z_i}\rightarrow Z_i$ and $\varphi ':\widetilde{Z_i}\rightarrow Z_i'$ such that $\tau _i\circ \varphi _i=\tau _i'\circ \varphi _i'$, and we denote by $\widetilde{\tau _i}:\widetilde{Z_i}\rightarrow X_i$ this common morphism. We then define $\widetilde{g_{i,j}}:\widetilde{Z_i}\rightarrow \widetilde{Z_j}$ to be the pullback of $f_{i,j}$ by $(\widetilde{\tau _i},\widetilde{\tau _j}):(\widetilde{Z_i},\widetilde{Z_j}\rightarrow (X_i,X_j)$, $\widetilde{d_{i,j}}=$ the degree of $\widetilde{\tau _i}\times \widetilde{\tau _j}:\widetilde{Z_i}\times \widetilde{Z_j}\rightarrow X_i\times X_j$, and $\widetilde{d}=\prod _{(i,j)\in I}\widetilde{d_{i,j}}$. The correspondence $\widetilde{g}:\widetilde{Z}\rightarrow \widetilde{Z}$ is finally given by
\begin{eqnarray*}
\widetilde{g}=\sum _{(i,j)\in I}\frac{\widetilde{d}}{\widetilde{d_{i,j}}}\widetilde{g_{i,j}}.
\end{eqnarray*} 
Now we consider each $(i,j)\in I$ separately. We have a semi-conjugacy $\widetilde{g_{i,j}}\rightarrow \deg (\varphi _i,\varphi _j)g_{i,j}$, therefore we have a semi-conjugacy
\begin{eqnarray*}
\frac{\widetilde{d}}{\widetilde{d_{i,j}}}\widetilde{g_{i,j}}\rightarrow \frac{\widetilde{d}}{\widetilde{d_{i,j}}}\deg (\varphi _i,\varphi _j)g_{i,j}=\frac{\widetilde{d}}{{d_{i,j}}}g_{i,j}=d'\frac{{d}}{{d_{i,j}}}g_{i,j}.
\end{eqnarray*} 
Taking the sum, we have a semi-conjugacy $\widetilde{g}\rightarrow d'g$, and moreover we have as argued before semi-conjugacies $\widetilde{g}^n\rightarrow (d')^ng^n$ for all $n\in \mathbb{N}$. Similarly, we have semi-conjugacies $\widetilde{g}^n\rightarrow d^n(g')^n$. From this, all the results proven previously for correspondences over irreducible varieties apply.  

\subsection{Fields that are not algebraically closed}

Let $K$ be a field, not necessarily closed, and $\overline{K}$ its algebraic closure. Let $X$ be a variety over $K$, together with a dominant correspondence $f:X\rightarrow X$. Let $\overline{X}$ be the lift of $X$ and $\overline{f}$ the lift of $f$ to $\overline{K}$ . We note that even if $X$ is irreducible, $\overline{X}$ may not be so. Hence in general we need to work with correspondences over reducible varieties. We then can define dynamical degrees for $f$ in terms of that for $\overline{f}$. 

\subsection{The simplicity of the first dynamical degree}
Let $K$ be an algebraically closed field of arbitrary characteristic, and $X$ a projective manifold. Assume that resolution of singularities is available for varieties over $K$ up to $\dim (X)$. We obtain analogs of Theorems 1.1 and 1.2 in \cite{truong2}, whose proofs, which are similar to those in \cite{truong2}, are omitted. 

We say that a dominant rational map $f:X\rightarrow X$ is $1$-stable if for any $n\in \mathbb{N}$ the two pullback operators $(f^n)^*:N^1_{\mathbb{R}}(X)\rightarrow N^1_{\mathbb{R}}(X)$ and $(f^*)^n:N^1_{\mathbb{R}}(X)\rightarrow N^1_{\mathbb{R}}(X)$ are the same. On the one hand, not all dominant rational map are $1$-stable, a simple example is that of the standard Cremona map $\sigma :\mathbb{P}^2\rightarrow \mathbb{P}^2$ given by 
\begin{eqnarray*}
\sigma [x:y:z]=[\frac{1}{x}:\frac{1}{y}:\frac{1}{z}].
\end{eqnarray*}
On the other hand, there are many interesting dominant rational maps which  are $1$-stable, including regular selfmaps and pseudo-automorphisms in dimension $3$. The latter is the class of all birational maps $f:X\rightarrow X$, where $\dim (X)=3$, so that both $f$ and $f^{-1}$ have no exceptional divisors. 

\begin{theorem} Let $X\subset \mathbb{P}^N_K$ be a projective manifold of dimension $k$, and $f:X\rightarrow X$ a dominant rational map which is $1$-stable. Assume that $\lambda _1(f)^2>\lambda _2(f)$. Then $\lambda _1(f)$ is a simple eigenvalue of $f^*:N^1_{\mathbb{R}}(X)\rightarrow N^1_{\mathbb{R}}(X)$. Further, $\lambda _1(f)$ is the only eigenvalue of modulus greater than $\sqrt{\lambda _2(f)}$.  
\label{TheoremFirstDynamicalDegreeIsSimpleAlgebraicCase}\end{theorem}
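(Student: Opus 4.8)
\medskip
\noindent\textbf{Proof idea.} The plan is to carry over the complex-analytic argument of \cite{truong2} to the present algebraic framework, replacing cohomology by the finite-dimensional space $N^1_{\mathbb{R}}(X)$, positive closed currents by pseudo-effective divisor classes, and the Hodge index theorem by the Grothendieck--Hodge index theorem recalled in Subsection 2.3. First I would reduce the statement to linear algebra on $N^1_{\mathbb{R}}(X)$. By part 5) of Theorem \ref{TheoremRelativeDynamicalDegrees}, $\lambda_1(f)=\lim_n\|(f^n)^*_1\|^{1/n}$, and since $f$ is $1$-stable this equals $\lim_n\|(f^*_1)^n\|^{1/n}$, i.e.\ the spectral radius of the operator $f^*_1\colon N^1_{\mathbb{R}}(X)\to N^1_{\mathbb{R}}(X)$. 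The closed convex cone $\mathcal E\subset N^1_{\mathbb{R}}(X)$ generated by classes of effective divisors is salient (if $\pm D\in\mathcal E$ then $\deg(D\cdot\omega_X^{k-1})=0$, forcing $D\equiv 0$ since the pairing with $\omega_X^{k-1}$ is strictly positive on $\mathcal E\setminus\{0\}$), has non-empty interior (it contains $\omega_X$), and is preserved by $f^*_1$ because pullbacks of effective divisors are effective; the Perron--Frobenius theorem for such cones then produces $\theta\in\mathcal E\setminus\{0\}$ with $f^*_1\theta=\lambda_1(f)\,\theta$. In particular $\lambda_1(f)$ is a positive real eigenvalue.

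The technical heart, and the step I expect to be the main obstacle, is the estimate
\[
\bigl|q\bigl((f^n)^*u,(f^n)^*v\bigr)\bigr|\ \le\ C\,\lambda_2(f)^{\,n+o(n)}\,\|u\|\,\|v\|,\qquad u,v\in N^1_{\mathbb{R}}(X),
\]
where $q(u,v)=\deg(u\cdot v\cdot\omega_X^{k-2})$ and $C$ is independent of $n$. Reducing to $u,v$ nef, the honest intersection product $(f^n)^*u\cdot(f^n)^*v$ differs from the strict product $(f^n)^*(u\cdot v)$ only by an effective class supported over the indeterminacy loci -- this is exactly where $f$ being rational rather than regular forces one into the strict-transform formalism of Section 4 -- while $\deg\bigl((f^n)^*(u\cdot v)\cdot\omega_X^{k-2}\bigr)=\deg\bigl(u\cdot v\cdot(f^n)_*\omega_X^{k-2}\bigr)$, which is $\le C\lambda_2(f)^{n+o(n)}\|u\|\|v\|$ since, by the same estimates together with duality on the smooth $X$, the iterated pushforward $(f^n)_*$ on $N^{k-2}(X)$ grows at rate $\lambda_2(f)$. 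Making the ``controlled error'' claims uniform in $n$ requires the quantitative moving-lemma bounds of Lemmas \ref{LemmaDegreeOfStrictTransform}, \ref{LemmaWedgeIntersectionCompatibility} and \ref{LemmaChowMovingForProduct}, assembled as in the proof of part 4) of Theorem \ref{TheoremRelativeDynamicalDegrees}; this bookkeeping is the delicate part.

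Granting this estimate, the conclusion follows from the Grothendieck--Hodge index theorem. Taking $u=v=\theta$ and using $f^*_1\theta=\lambda_1(f)\theta$ gives $\lambda_1(f)^{2n}|q(\theta,\theta)|\le C\lambda_2(f)^{n+o(n)}\|\theta\|^2$, so $q(\theta,\theta)=0$ because $\lambda_1(f)^2>\lambda_2(f)$. More generally, let $V^+\subset N^1_{\mathbb{C}}(X)$ be the span of the generalized eigenspaces of $f^*_1$ for eigenvalues of modulus $>\sqrt{\lambda_2(f)}$. For generalized eigenvectors $u,v\in V^+$ with eigenvalues $\mu,\nu$, the number $q\bigl((f^n)^*u,(f^n)^*v\bigr)$ has exponential growth rate exactly $|\mu\nu|>\lambda_2(f)$ unless $q(u,v)=0$; the displayed bound therefore forces $q\equiv 0$ on $V^+$. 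Since $f^*_1$ is a real operator, $V^+$ is stable under complex conjugation, hence $V^+=V^+_{\mathbb{R}}\otimes_{\mathbb{R}}\mathbb{C}$ for the real subspace $V^+_{\mathbb{R}}=V^+\cap N^1_{\mathbb{R}}(X)$, on which $q$ vanishes identically. As $q$ has signature $(1,\rho-1)$ with $\rho=\mathrm{rank}\,N^1(X)$, any totally $q$-isotropic real subspace has dimension at most one, so $\dim_{\mathbb{R}}V^+_{\mathbb{R}}\le 1$ (the case $\rho=1$ being trivial, since then $f^*_1$ is a scalar). Since $\theta\in V^+$ and $\theta\neq 0$, we conclude $V^+=\mathbb{C}\theta$, which thus coincides with the full generalized eigenspace of $\lambda_1(f)$; equivalently, $\lambda_1(f)$ is an algebraically simple eigenvalue of $f^*_1$ and the only eigenvalue of modulus exceeding $\sqrt{\lambda_2(f)}$.
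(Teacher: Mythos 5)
Your overall architecture — Perron--Frobenius for the pseudo-effective cone, the signature $(1,\rho-1)$ of the Grothendieck--Hodge form, and the decomposition $V^+=V^+_{\mathbb{R}}\otimes_{\mathbb{R}}\mathbb{C}$ — is the natural one, and the closing linear algebra is fine. But the ``technical heart'' you isolate is genuinely broken, in two independent ways.

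First, the displayed estimate is simply false. Take $X=\mathbb{P}^2$, $\omega_X=H$, and $f$ any $1$-stable birational transformation of algebraic degree $d\ge 2$ (a generic quadratic Cremona transformation, or a H\'enon map extended to $\mathbb{P}^2$). Then $\lambda_1(f)=d$, $\lambda_2(f)=1$, so the theorem's hypothesis $\lambda_1(f)^2>\lambda_2(f)$ holds, and with $u=v=H$ one computes $q\bigl((f^n)^*H,(f^n)^*H\bigr)=\deg\bigl((d^nH)^2\bigr)=d^{2n}$, whereas your bound would give $d^{2n}\le C\lambda_2(f)^{n+o(n)}\|H\|^2=O(1)$. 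The intermediate conclusion you draw from it, $q(\theta,\theta)=0$ for the Perron--Frobenius class $\theta$, is therefore also false here: $\theta=H$ and $q(H,H)=1$. (The theorem is of course still true in this case, but only because $\rho=1$ makes simplicity trivial; the argument you run passes through an assertion that is wrong.)

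Second, the sketch you give of the estimate is wrong in direction. You observe that for nef $u,v$ the class $(f^n)^*u\cdot(f^n)^*v-(f^n)^*(u\cdot v)$ is effective (push the negativity lemma along the resolved graph), which is correct — but effectivity makes the intersection product \emph{larger} than the codimension-two pullback, not smaller. So bounding $\deg\bigl((f^n)^*(u\cdot v)\cdot\omega_X^{k-2}\bigr)$ by $\lambda_2(f)^{n+o(n)}$ is a \emph{lower} bound for $q\bigl((f^n)^*u,(f^n)^*v\bigr)$, not an upper bound. The effective error term you set aside is precisely the quantity that can (and in the example above does) grow at rate $\lambda_1(f)^{2n}$; controlling it is the entire content of the theorem, and Lemmas \ref{LemmaDegreeOfStrictTransform}, \ref{LemmaWedgeIntersectionCompatibility}, \ref{LemmaChowMovingForProduct} give you comparisons with the codimension-two pullback but never the needed upper bound on the honest intersection by $\lambda_2(f)^{n+o(n)}$.

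So the proof as written has a real gap. The paper itself omits the argument and refers to \cite{truong2}; whatever that argument is, it cannot be a blanket bound $|q((f^n)^*u,(f^n)^*v)|\le C\lambda_2(f)^{n+o(n)}\|u\|\|v\|$ over all of $N^1_{\mathbb{R}}(X)$. A correct route must exploit more of the Lorentzian structure — for instance arguing by contradiction from $\dim_{\mathbb{R}}V^+_{\mathbb{R}}\ge 2$, where the signature forces some non-$q$-isotropic pair, and then playing reverse Cauchy--Schwarz/Khovanskii--Teissier inequalities in the forward light cone against the lower bound $q((f^n)^*\omega,(f^n)^*\omega)\gtrsim\lambda_2(f)^n$ coming from the codimension-two degree, or using the adjoint $f_*$ on $N^1$ and its invariant class simultaneously — rather than the single growth estimate you posit.
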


We denote by $r_1(f)$ and $r_2(f)$ the spectral radii of the linear maps $f^*:N^1_{\mathbb{R}}(X)\rightarrow N^1_{\mathbb{R}}(X)$ and $f^*:N^2_{\mathbb{R}}(X)\rightarrow N^2_{\mathbb{R}}(X)$.
\begin{theorem}
Let $X\subset \mathbb{P}^N_K$ be a projective manifold, and let $f:X\rightarrow X$ be a dominant rational map. Assume that $f^*:N^2_{\mathbb{R}}(X)\rightarrow N^2_{\mathbb{R}}(X)$ preserves the cone of effective classes. Then  

1) We have $r_1(f)^2\geq r_2(f)$.

2) Assume moreover that $r_1(f)^2>r_2(f)$. Then $r_1(f)$ is a simple eigenvalue of $f^*:N^1_{\mathbb{R}}(X)\rightarrow N^1_{\mathbb{R}}(X)$. Further, $r_1(f)$ is the only eigenvalue of modulus greater than $\sqrt{r_2(f)}$.  

\label{TheoremTheCaseNotStableAlgebraicCase}\end{theorem}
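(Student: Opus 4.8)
\emph{Proof proposal.} The plan is to transcribe the argument of [Thm.~1.2]\cite{truong2}, replacing the K\"ahler and current inputs by the algebraic machinery of Sections 2 and 4: the degree map, the norm $\|\cdot\|_1$, Roberts' moving lemma and Lemma~\ref{LemmaDegreeOfIntersections}. Throughout, $(f^*)^n$ denotes the iterated \emph{operator} $f^*\circ\cdots\circ f^*$ on $N^\bullet_{\mathbb{R}}(X)$ (the object governing the spectral radii $r_1,r_2$), not the pullback by $f^n$; since resolution of singularities is assumed up to $\dim(X)$ in this subsection, $f$ may be studied on a smooth model $\widetilde\Gamma$ of its graph, with $p,q:\widetilde\Gamma\to X$ the two induced regular maps, $p$ birational. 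First I would fix the positivity framework. Let $C_1\subset N^1_{\mathbb{R}}(X)$ and $C_2\subset N^2_{\mathbb{R}}(X)$ be the closures of the cones generated by classes of effective cycles; each is closed, salient (as $\deg$ is strictly positive on $C_p\setminus\{0\}$, because $\|\cdot\|_1=\deg$ on effective classes) and full-dimensional (with $\omega_X^p$ in the interior). The operator $f^*_1$ preserves $C_1$ since the pullback of an effective divisor is effective, and $f^*_2$ preserves $C_2$ by hypothesis. By the Perron--Frobenius/Krein--Rutman theorem for a linear operator preserving a closed salient full cone, $r_1(f)$ (resp.\ $r_2(f)$) is an eigenvalue of $f^*_1$ (resp.\ $f^*_2$) with an eigenvector $\theta\in C_1\setminus\{0\}$ (resp.\ $\eta\in C_2\setminus\{0\}$). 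I also record two facts from Section~2: (i) every $\alpha\in C_p$ satisfies $\alpha\le A\|\alpha\|_1\,\omega_X^p$ in $C_p$ for a constant $A=A(X)$ (a consequence of Lemma~\ref{LemmaDegreeOfIntersections} and the definition of $\|\cdot\|_1$); and (ii) for pseudoeffective divisor classes $\alpha,\beta$ one has $f^*(\alpha\cdot\beta)\le f^*(\alpha)\cdot f^*(\beta)$ in $C_2$, the defect being $p_*\big(E\cdot q^*\beta\big)$ where $E:=p^*p_*(q^*\alpha)-q^*\alpha$ is effective and $p$-exceptional. Iterating (ii), using that $f^*_2$ preserves $C_2$, gives $(f^*)^n(\alpha\cdot\beta)\le\big((f^*)^n\alpha\big)\cdot\big((f^*)^n\beta\big)$.

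For part~1, put $D_n:=\deg\big((f^*)^n\omega_X\big)$. Fact (i) and the fact that $C_1$ spans $N^1_{\mathbb{R}}(X)$ give $c\,D_n\le\|(f^*)^n_1\|\le A'\,D_n$, hence $\lim_n D_n^{1/n}=r_1$. Applying the iterated (ii) with $\alpha=\beta=\omega_X$, then (i) and Lemma~\ref{LemmaDegreeOfIntersections} to the effective class $(f^*)^n\omega_X$, yields $\deg\!\big((f^*)^n(\omega_X^2)\big)\le\deg\!\big(((f^*)^n\omega_X)^2\big)\le A''D_n^2$. Since any $\alpha\in C_2$ satisfies $\alpha\le A\|\alpha\|_1\omega_X^2$ and $(f^*)^n$ preserves $C_2$, we get $\|(f^*)^n\alpha\|_1\le A\|\alpha\|_1\deg\!\big((f^*)^n(\omega_X^2)\big)\le A'''\|\alpha\|_1D_n^2$, whence $\|(f^*)^n_2\|\le A'''D_n^2$ and $r_2(f)=\lim_n\|(f^*)^n_2\|^{1/n}\le\lim_n D_n^{2/n}=r_1(f)^2$.

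For part~2, assume $r_1^2>r_2$ and fix $\rho$ with $\sqrt{r_2}<\rho<r_1$; this gap step is the crux. Normalize $\theta$ by $\theta\cdot\omega_X^{k-1}=1$, and let $\theta^\vee\in N^{k-1}_{\mathbb{R}}(X)$ be a pseudoeffective eigenvector of the transpose operator $f_*$ (pushforward preserves effective $(k-1)$-cycles) with eigenvalue $r_1=$ spectral radius of $f_*$. One checks $\theta\cdot\theta^\vee>0$ using the positivity of $\theta,\theta^\vee$ together with $f^*\theta=r_1\theta$ and $f_*\theta^\vee=r_1\theta^\vee$ (as in \cite{truong2}), so that $W:=\{\alpha\in N^1_{\mathbb{R}}(X):\alpha\cdot\theta^\vee=0\}$ is an $f^*_1$-invariant complement of $\mathbb{R}\theta$. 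It then suffices to show the spectral radius of $f^*_1|_W$ is $\le\sqrt{r_2}$. For this I would introduce the $\theta$-twisted Hodge--Grothendieck quadratic form on $W$ (built from $\theta$ and $\omega_X$), show it is negative (semi)definite there via the Grothendieck--Hodge index theorem of Section~2.3 applied on $\widetilde\Gamma$, and show $f^*$ contracts it at rate $\le r_2$ using inequality (ii) and Lemma~\ref{LemmaDegreeOfIntersections}; combined with $\rho^2>r_2$ this forces $\|(f^*)^n|_W\|\le C\,n^{k}\rho^n$. Hence $r_1$ is the unique eigenvalue of $f^*_1$ of modulus $\ge\rho$, with one-dimensional generalized eigenspace; letting $\rho\downarrow\sqrt{r_2}$ gives that $r_1$ is simple and the only eigenvalue of modulus $>\sqrt{r_2}$.

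I expect the genuine obstacle to be exactly this last step: the eigenclass $\theta$ is only known to be pseudoeffective, not nef, so neither $\theta^{k-2}$ nor the twisted form is manifestly definite, and one must extract the needed Hodge-index negativity on the $\theta$-primitive subspace and control its $f^*$-orbit using only $\|\cdot\|_1$-estimates and Roberts' moving lemma in place of a positive metric. Everything else is a routine adaptation of \cite{truong2}, with Theorem~\ref{TheoremFiber}, Roberts' construction and the resolution hypothesis of this subsection replacing the complex-analytic tools.
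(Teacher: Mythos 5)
The paper gives no proof of this theorem: Section 6.4 explicitly states that the proofs of the analogues of Theorems 1.1 and 1.2 of \cite{truong2} "are similar to those in \cite{truong2}" and are omitted. So there is no in-paper argument against which to compare step by step, and the question is whether your reconstruction is internally sound.

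Your overall plan — replace currents by effective cycle classes, use the degree/$\|\cdot\|_1$ framework of Section~2 together with Roberts' moving lemma, invoke Perron--Frobenius on the closed salient cones $C_1,C_2$, and deploy the Grothendieck--Hodge index theorem for the gap statement — is the right shape of argument and matches what the paper points to. Your fact (i) is fine, and the quickest way to see it is the one implicit in Section~2: for an irreducible $V$ of codimension $p$, $C_L(V).X - V$ is effective and $C_L(V).X$ has class $\deg(V)\,\omega_X^p$, so $V\le \deg(V)\,\omega_X^p$ in $C_p$, hence $\alpha\le\|\alpha\|_1\,\omega_X^p$ on $C_p$.

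The genuine gap is in your fact (ii) and its iteration. You assert that $E:=p^*p_*(q^*\alpha)-q^*\alpha$ is effective for $\alpha$ pseudoeffective, and you then apply (ii) to the intermediate classes $(f^*)^j\omega_X$, which are effective but in general not nef. But $p^*p_*(D)-D$ is effective (for $p$ a birational morphism) only when $D$ is $p$-nef: that is the content of the negativity lemma, and the proof that you would use here needs it. If $q^*\alpha$ has $p$-exceptional components $D_{\mathrm{exc}}$ — which happens as soon as some $p$-exceptional divisor of $\widetilde\Gamma$ is mapped by $q$ into the support of $\alpha$ — then $p^*p_*(q^*\alpha)-q^*\alpha=F-D_{\mathrm{exc}}$ with both $F$ and $D_{\mathrm{exc}}$ effective, and the difference has no effectivity in general. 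This is not a technical nicety: $\alpha=\omega_X$ is nef so the first step is fine, but after one pullback $\alpha=f^*\omega_X$ need not be nef, and $(f^*\alpha)^2$ need not even lie in $C_2$, so the comparison $(f^*)^n(\omega_X^2)\le((f^*)^n\omega_X)^2$ that your proof of Part 1 rests on is unjustified. The hypothesis that $f^*_2$ preserves $C_2$ lets you iterate an already-established inequality, but it does not make (ii) hold for the non-nef classes you are feeding into it. In \cite{truong2} the corresponding step is carried out with positive closed $(1,1)$-currents and super-potential intersection theory, where a much wider class of positive objects is available and the wedge inequality can be controlled; the translation to cycle classes over an arbitrary field is exactly where the work lies, and you have not supplied it. A fix would have to either (a) prove (ii) with the correct hypothesis and keep the iteration inside the nef cone (e.g.\ via a nef super-eigenclass, which does not obviously exist since $f^*_1$ need not preserve nefness for rational $f$), or (b) replace the iteration by a one-step Khovanskii--Teissier estimate on the graph of $f$ that directly bounds $\deg(f^*\gamma)$ for $\gamma\in C_2$ in terms of $\deg(f^*\omega_X)$, rather than bootstrapping through $(f^*)^j\omega_X$. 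For Part~2 you already flag the remaining obstacle correctly: the Perron eigenclass $\theta$ is only pseudoeffective, and the Hodge-index negativity on the $\theta$-primitive part is not available without further positivity of $\theta$; this is not a small step and the proposal does not close it.
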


\subsection{Surfaces and threefolds over a field of positive characteristic}

All results in this paper are valid for surfaces and threefolds over a field of positive characteristic, since resolutions of singularities for surfaces (see \cite{zariski1, abhyankar1}) and threefolds (see \cite{zariski2, abhyankar2, cutkosky, cossart-piltant1, cossart-piltant2}) over a field of positive characteristic are available. 

\subsection{Primitive correspondences}

We say that a dominant correspondence $f:X\rightarrow X$ is primitive if there is no semi-conjugacy $\pi :(X,f)\rightarrow (Y,g)$, where $Y$ is irreducible smooth and $0<\dim (Y)<\dim (X)$. Note that any dominant correspondence $f:X\rightarrow X$ always semi-conjugate to a multiple of the identity map of a point, and hence in the above definition we may consider the special case where $g$ is a multiple of a dominant rational map on $Y$, or more generally a correspondence $g$ so that $g^n$ is irreducible for infinitely many $n\in \mathbb{N}$. By the examples in Remark \ref{Remark1}, there are many semi-conjugacies $\pi :(X,f)\rightarrow (Y,g)$ where $g$ is a multiple of a rational map but $f$ is not a multiple of a rational map. In fact, such examples can always be constructed provided we are given two dominant rational maps $\pi :X\rightarrow Y$ and $g':Y\rightarrow Y$, where $\pi$ is not a birational map.   

We say that a dominant correspondence $g:Y\rightarrow Y$ is morphism-like, if $\lambda _0(g)\leq \lambda _1(g)$ and $g^n$ is irreducible for infinitely many $n\in \mathbb{N}$. For example, if $g$ is a multiple of a dominant rational map then it is morphism-like.  We say that a dominant correspondence $f:X\rightarrow X$ is weakly primitive if it cannot be semi-conjugate to a correspondence $g:Y\rightarrow Y$, where $0<\dim (Y)<\dim (X)$ and $g$ is a morphism-like. 

We have the following criterion for weakly primitivity. It says that if $f$ is not morphism-like, then it must be weakly primitive. 
\begin{theorem}
 If $\lambda _0(f)>\lambda _1(f)$, then $f$ is weakly primitive. 
\label{TheoremPrimitiveCorrespondence}\end{theorem}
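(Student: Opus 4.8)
The plan is to argue by contradiction. Suppose $f$ is \emph{not} weakly primitive: there is a semi-conjugacy $\pi:(X,f)\rightarrow (Y,g)$ with $Y$ irreducible smooth, $0<l:=\dim(Y)<\dim(X)=:k$, and $g$ morphism-like. I will deduce $\lambda_0(f)\leq\lambda_1(f)$, contradicting the hypothesis $\lambda_0(f)>\lambda_1(f)$; hence no such $g$ exists and $f$ is weakly primitive.

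First I would reduce to the case where $X$ is smooth projective and $\pi$ is a regular morphism. Let $\tau:X'\rightarrow X$ be an alteration of the graph of $\pi$, so that $X'$ is smooth projective, $\tau$ is generically finite, and $\pi'=\pi\circ\tau:X'\rightarrow Y$ is regular. Put $f'=\tau^*f$ (Section 3). By Lemmas \ref{LemmaSemiconjugacy1} and \ref{LemmaSemiconjugacyAssociativity} there is a semi-conjugacy $\pi':(X',f')\rightarrow (Y,\deg(\tau)\,g)$, and $\deg(\tau)\,g$ is again morphism-like, since $\lambda_p(\deg(\tau)g)=\deg(\tau)\lambda_p(g)$ for all $p$ and $(\deg(\tau)g)^n$ is irreducible exactly when $g^n$ is. Moreover, by the definition of dynamical degrees in the general case (Section 5.2, see also Section 6.1), $\lambda_p(f)=\deg(\tau)^{-1}\lambda_p(f')$ for $p=0,1$, so $\lambda_0(f)\leq\lambda_1(f)$ is equivalent to $\lambda_0(f')\leq\lambda_1(f')$. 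Replacing $(X,f,\pi,g)$ by $(X',f',\pi',\deg(\tau)g)$ we may thus assume $X$ smooth projective and $\pi$ regular, the remaining hypotheses being preserved.

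The one substantive ingredient is the identity $\lambda_0(f)=\lambda_0(f|\pi)$ in this smooth regular setting. Writing $\Gamma_{f^n}$ for the graph of $f^n$ and $\pi_1:X\times X\rightarrow X$ for the first projection, every irreducible component of $\Gamma_{f^n}$ has dimension $k$ and projects dominantly to the first factor, so $(f^n)^*(1)=(\pi_1)_*(\Gamma_{f^n})=c_n[X]$ for a positive integer $c_n$. Hence
\begin{eqnarray*}
\deg_0(f^n)&=&\deg\big(c_n[X]\cdot\omega_X^k\big)=c_n\deg(X),\\
\deg_0(f^n|\pi)&=&\deg\big(c_n[X]\cap\pi^{-1}(\omega_Y^l)\big)=c_n\deg\big(\pi^{-1}(\omega_Y^l)\big),
\end{eqnarray*}
and both $\deg(X)$ and $\deg(\pi^{-1}(\omega_Y^l))$ are positive constants independent of $n$ (the second because $\pi$ is regular, so $\pi^{-1}(\omega_Y^l)$ has a fixed numerical class; see Lemma \ref{LemmaRDD2}). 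Taking $n$-th roots and using that the relevant limits exist (Theorem \ref{TheoremRDD4}) gives $\lambda_0(f)=\lim_n c_n^{1/n}=\lambda_0(f|\pi)$.

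Finally I would invoke the weak product formula. Since $g$ is morphism-like there is an increasing sequence $n_i$ with $g^{n_i}$ irreducible, and $Y$ is smooth, so the first inequality of Theorem \ref{TheoremWeakerProductFormula} applies to $\pi:(X,f)\rightarrow (Y,g)$. Taking $p=1$, the summand $j=1$ is admissible (as $l\geq 1$ and $p-j=0\leq k-l$), so
\begin{eqnarray*}
\lambda_0(g)\,\lambda_1(f)\ \geq\ \lambda_1(g)\,\lambda_0(f|\pi)\ =\ \lambda_1(g)\,\lambda_0(f),
\end{eqnarray*}
the equality by the previous paragraph. Because $g$ is morphism-like, $\lambda_0(g)\leq\lambda_1(g)$, and $\lambda_0(g)\geq 1$ by part 1) of Theorem \ref{TheoremRelativeDynamicalDegrees}; dividing by $\lambda_0(g)>0$ yields $\lambda_1(f)\geq(\lambda_1(g)/\lambda_0(g))\,\lambda_0(f)\geq\lambda_0(f)$, the desired contradiction. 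I expect the identity $\lambda_0(f)=\lambda_0(f|\pi)$ to be the only point needing care — in particular the compatibility of the alteration of the graph of $\pi$ with the definitions of both degrees, and the bookkeeping showing that a single integer $c_n$ governs $\deg_0(f^n)$ and $\deg_0(f^n|\pi)$ up to fixed multiplicative constants; the rest is formal given Theorem \ref{TheoremWeakerProductFormula}. (This argument also supplies the equality $\lambda_0(f)=\lambda_0(f|\pi)$ used in the proof of Theorem \ref{TheoremWeakerProductFormula} and in Remark \ref{Remark1}.)
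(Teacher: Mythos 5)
Your proof is correct and follows essentially the same route as the paper: invoke Theorem \ref{TheoremWeakerProductFormula} at $p=1$, use $\lambda_1(g)\geq\lambda_0(g)$ from the definition of morphism-like, and the identity $\lambda_0(f)=\lambda_0(f|\pi)$. You have filled in two pieces the paper leaves implicit -- the alteration reducing to $X$ smooth with $\pi$ regular (needed before the displayed degree formulas make sense), and the explicit observation that $(f^n)^*[X]=c_n[X]$ with a single $c_n$ governing both $\deg_0(f^n)$ and $\deg_0(f^n|\pi)$ up to fixed constants -- but these are exactly the verifications the paper's one-line computation of $\lambda_0(f|\pi)$ presupposes, not a different argument.
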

\begin{proof}
This follows from Theorem \ref{TheoremWeakerProductFormula}, by using that  for a morphism-like $g$ we have $\lambda _1(g)\geq \lambda _0(g)$ and that $\lambda _0(f)=\lambda _0(f|\pi )$. The latter can be seen as follows. We have by definition
\begin{eqnarray*}
\lambda _0(f|\pi )=\lim _{n\rightarrow\infty}((f^n)^*(X).\pi ^*(\omega _Y^l).\omega _X^{k-l})^{1/n}=\lim _{n\rightarrow\infty}||(f^n)^*(X)||^{1/n}=\lambda _0(f).
\end{eqnarray*}
\end{proof}
We note that under the assumptions of this theorem, it may still happen that $f$ is imprimitive, being semi-conjugate to a correspondence which is not morphism-like. See Example 2 below for more detail. 

Similarly, when $f$ is a dominant rational map such that $\lambda _1(f)>\lambda _2(f)$, it can be shown as in \cite{oguiso-truong} that $f$ is (strongly) primitive. We also have the following consequence of the product formula and resolution of singularities for surfaces and threefolds. It can be applied to the case $\dim (X)\leq 4$. 
\begin{corollary}
Let $X,Y$ be irreducible projective varieties (not necessarily smooth), and $\pi :(X,f)\rightarrow (Y,g)$ a semi-conjugacy of dominant correspondences, where $g$ is a multiple of a rational map. Assume that $\dim (Y)\leq 3$. Then the product formula is satisfied. 
\label{CorollaryProductFormula}\end{corollary}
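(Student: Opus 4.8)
The plan is to reduce the statement to part 4) of Theorem \ref{TheoremRelativeDynamicalDegrees} by resolving the singularities of $Y$, which is possible precisely because $\dim(Y)\leq 3$. Write $g=ag'$ with $a\in\mathbb{N}$ and $g':Y\rightarrow Y$ a dominant rational map, and put $k=\dim(X)$, $l=\dim(Y)$; we must establish
\begin{eqnarray*}
a\lambda_p(f)=\max_{0\leq j\leq l,~0\leq p-j\leq k-l}\lambda_j(g)\lambda_{p-j}(f|\pi)
\end{eqnarray*}
for all $0\leq p\leq k$ (equivalently, the form of the product formula in part 4) of Theorem \ref{TheoremRelativeDynamicalDegrees}).

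First I would choose a resolution of singularities $\sigma:\widetilde{Y}\rightarrow Y$, with $\widetilde{Y}$ an irreducible smooth projective variety and $\sigma$ birational, which exists since $\dim(Y)\leq 3$ by the resolution results for surfaces and threefolds cited in the paper. Let $\widetilde{g}=\sigma^*(g)$ be the pullback correspondence of Section 3; since $\deg(\sigma)=1$, Lemma \ref{LemmaSemiconjugacy1} yields a birational semi-conjugacy $\sigma:(\widetilde{Y},\widetilde{g})\rightarrow(Y,g)$. Moreover $\widetilde{g}=\sigma^*(ag')=a\,\sigma^*(g')$ by linearity of the strict pullback in the multiplicities, and $\sigma^*(g')=\sigma^{-1}\circ g'\circ\sigma$ is again a dominant rational map (its graph, being the graph of a rational map, is irreducible), so $\widetilde{g}$ is a multiple of a rational map. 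Next set $\widetilde{\pi}=\sigma^{-1}\circ\pi:X\rightarrow\widetilde{Y}$, a dominant rational map satisfying $\sigma\circ\widetilde{\pi}=\pi$. Combining $\pi\circ f=g\circ\pi$ with $\sigma\circ\widetilde{g}=g\circ\sigma$ and the associativity of composition of correspondences (Lemma \ref{LemmaSemiconjugacyAssociativity}) gives
\begin{eqnarray*}
\sigma\circ(\widetilde{\pi}\circ f)=g\circ\pi=g\circ\sigma\circ\widetilde{\pi}=\sigma\circ(\widetilde{g}\circ\widetilde{\pi}),
\end{eqnarray*}
and cancelling the birational map $\sigma$ on the left (legitimate because $\sigma^{-1}\circ\sigma=\mathrm{id}_{\widetilde{Y}}$ as correspondences) we obtain $\widetilde{\pi}\circ f=\widetilde{g}\circ\widetilde{\pi}$, i.e. a semi-conjugacy $\widetilde{\pi}:(X,f)\rightarrow(\widetilde{Y},\widetilde{g})$.

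Since $\widetilde{Y}$ is smooth and $\widetilde{g}$ is a multiple of a rational map, part 4) of Theorem \ref{TheoremRelativeDynamicalDegrees} applies to $\widetilde{\pi}:(X,f)\rightarrow(\widetilde{Y},\widetilde{g})$ (note $\dim(\widetilde{Y})=l$), giving $a\lambda_p(f)=\max_{0\leq j\leq l,~0\leq p-j\leq k-l}\lambda_j(\widetilde{g})\lambda_{p-j}(f|\widetilde{\pi})$. To finish I would identify the right-hand side with that of the asserted formula by birational invariance: applying Lemma \ref{LemmaBirationalInvariant} to the constant maps to a point, with $\varphi=\sigma$ birational and $\psi=\mathrm{id}$, gives $\lambda_j(\widetilde{g})=\lambda_j(g)$ for all $j$; applying Lemma \ref{LemmaBirationalInvariant} once more with $(X_1,f_1)=(X_2,f_2)=(X,f)$, $\pi_1=\pi$, $\pi_2=\widetilde{\pi}$, $\varphi=\mathrm{id}_X$ and $\psi=\sigma:(\widetilde{Y},\widetilde{g})\rightarrow(Y,g)$ generically finite (with $\sigma\circ\widetilde{\pi}=\pi=\pi\circ\mathrm{id}_X$) gives $\lambda_{p-j}(f|\widetilde{\pi})=\lambda_{p-j}(f|\pi)$ throughout the relevant range. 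Substituting these equalities yields the product formula for $\pi:(X,f)\rightarrow(Y,g)$. The only delicate point is the lifting of the semi-conjugacy to $\widetilde{Y}$ — namely the cancellation of $\sigma$ on the left of an equality of correspondences and the verification that $\widetilde{g}$ remains a multiple of a rational map — after which the statement is an immediate consequence of part 4) of Theorem \ref{TheoremRelativeDynamicalDegrees} together with the birational invariance of (relative) dynamical degrees already established in Lemma \ref{LemmaBirationalInvariant} and part 2) of Theorem \ref{TheoremRelativeDynamicalDegrees}.
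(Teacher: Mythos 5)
Your proposal is correct and follows the route the paper indicates (the paper states the corollary without a written-out proof, remarking only that it is a consequence of the product formula in part 4) of Theorem~\ref{TheoremRelativeDynamicalDegrees} together with resolution of singularities for surfaces and threefolds). You correctly fill in the details: resolving $Y$ to a smooth $\widetilde{Y}$, checking that the pulled-back correspondence $\widetilde{g}=\sigma^*(g)$ remains a multiple of a rational map, lifting $\pi$ to $\widetilde{\pi}=\sigma^{-1}\circ\pi$ and verifying $\widetilde{\pi}\circ f=\widetilde{g}\circ\widetilde{\pi}$ by cancelling the birational $\sigma$, applying part 4) to the smooth semi-conjugacy, and then using Lemma~\ref{LemmaBirationalInvariant} twice (once with the target a point to get $\lambda_j(\widetilde g)=\lambda_j(g)$, once with $\varphi=\mathrm{id}_X$, $\psi=\sigma$ to get $\lambda_{p-j}(f|\widetilde\pi)=\lambda_{p-j}(f|\pi)$) to translate back.
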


{\bf Example 1.} For the Hurtwitz correspondences $f$, it was shown in \cite{ramadas} that $\lambda _0(f)$ is the largest dynamical degree, hence Theorem \ref{TheoremPrimitiveCorrespondence} may be applicable. 

{\bf Example 2.} Let $f':X\rightarrow X$ be a dominant rational map such that $\lambda _k(f')>\lambda _{k-1}(f')$, where $k=\dim (X)$ . (For example, $f'$ is a regular morphism of $\mathbb{P}^k$ of degree $d\geq 2$.) Let $f=(f')^{-1}$ be the reverse correspondence of $f'$, i.e. the graph of $f$ is the image of the graph of $f'$ by the automorphism $(x_1,x_2)\rightarrow (x_2,x_1)$ of $X\times X$. Then $\lambda _0(f)=\lambda _{k}(f')>\lambda _{k-1}(f')=\lambda _1(f)$. Therefore, Theorem \ref{TheoremPrimitiveCorrespondence} is applicable.   

From a given $f:X\rightarrow X$ which satisfies the assumptions of Theorem \ref{TheoremPrimitiveCorrespondence}, we can construct more in the following manner. Let $f_1,\ldots ,f_m:X\rightarrow X$ be dominant correspondences for which any pair among $f,f_1,\ldots ,f_m$ commute. (For example, choose the $f_i$'s among the iterates of $f$.) Then the new correspondence $af+a_1f_1+\ldots +a_mf_m$ with $a>>\max \{a_1,\ldots ,a_m\}$ will do the job, thanks to Theorem \ref{TheoremTriangleInequality}. 

In these examples, it is possible that $f$ is still imprimitive. In fact, let $k\geq 2$, $X=\mathbb{P}^k$, $Y=\mathbb{P}^{k-1}$, and $\pi :X\rightarrow Y$ the dominant rational map $[x_0:x_1:\ldots :x_{k-1}:x_k]\mapsto [x_0:\ldots :x_{k-1}]$. Let $d\geq 2$, $f':X\rightarrow X$ be the surjective morphism $[x_0:\ldots :x_k]\mapsto [x_0^d:\ldots :x_{k}^d]$, and $g':Y\rightarrow Y$ the surjective morphism $[x_0:\ldots :x_{k-1}]\mapsto [x_0^d:\ldots :x_{k-1}^d]$. Let $f$ be the reverse of $f'$ and $g$ the reverse of $g'$. Then it can be checked that we have a  semi-conjugacy $\pi :(X,f)\rightarrow (Y,g)$. Note that the correspondence $g$ is not morphism-like: while $g^n$ (being the reverse of $(g')^n$) is irreducible for all $n$, the condition $\lambda _0(g)\leq \lambda _1(g)$ is not satisfied.

{\bf Example 3.} Even in the case where the dynamical degrees of a correspondence do not satisfy the assumptions of Theorem \ref{TheoremPrimitiveCorrespondence}, a full use of the product formula for dynamical degrees may still help to show that a certain correspondence is primitive. For example, let $f:X\rightarrow X$ be a correspondence on a surface.  Assume that there is an irreducible curve $Y$ together with  a dominant rational map $g':Y\rightarrow Y$ and a dominant rational map $\pi :X\rightarrow Y$ so that $\pi \circ f=g\circ \pi $, where $g=ag'$. By Corollary \ref{CorollaryProductFormula}, the product formula for relative dynamical degrees is valid in this situation, even when $Y$ is not smooth. Therefore, we obtain 
\begin{eqnarray*}
a\lambda _0(f)&=&\lambda _0(g)\lambda _0(f|\pi ),\\
a\lambda _1(f)&=&\max \{\lambda _0(g)\lambda _1(f|\pi ),\lambda _1(g)\lambda _0(f|\pi )\},\\
a\lambda _2(f)&=&\lambda _1(g)\lambda _1(f|\pi ).
\end{eqnarray*}
In particular
\begin{eqnarray*}
a^2\lambda _1(f)^2\geq (\lambda _0(g)\lambda _1(f|\pi ))(\lambda _1(g)\lambda _0(f|\pi ))=(\lambda _0(g)\lambda _0(f|\pi ))(\lambda _1(g)\lambda _1(f|\pi ))=a^2\lambda _0(f)\lambda _2(f).
\end{eqnarray*}
Hence, provided $\lambda _1(f)^2<\lambda _0(f)\lambda _2(f)$ (that is, when the log-concavity fails), $f$ cannot be semi-conjugate to a multiple of a dominant rational map over an irreducible curve $Y$. 

In Remark \ref{Remark1}, we constructed many examples of correspondences whose dynamical degrees violate the log-concavity. Here is one specific example. We let $h:\mathbb{P}^2\rightarrow \mathbb{P}^2$ be a regular morphism of algebraic degree $d\geq 2$. Let $a$  be a positive integer. Consider $f=h+a\Delta $, where $\Delta $ is the diagonal. Then $\lambda _0(f)=1+a$, $\lambda _1(f)=d+a$ and $\lambda _2(f)=d^2+a$. It can be checked that $\lambda _1(f)^2<\lambda _0(f)\lambda _2(f)$ for all choices of $a$.  

{\bf Example 4.} Similar to Example 3, if $\pi :(X,f)\rightarrow (Y,ag')$ is a semi-conjugacy, where $\dim (X)=3$, $0<\dim (Y)<3$, and $g':Y\rightarrow Y$ is a dominant rational map, then we must have $\lambda _0(f)\lambda _3(f)\leq \lambda _1(f)\lambda _2(f)$. A construction in the same line as that in Remark \ref{Remark1} and Example 2 provides many examples where the previous inequality is violated.

\end{document}